\newtheorem{thm}{Theorem}[section]
\newtheorem{lem}[thm]{Lemma}
\newtheorem{prop}[thm]{Proposition}
\theoremstyle{definition}
\newcommand{\comment}[1]{}
\newtheorem{defn}[thm]{Definition}
\theoremstyle{remark}
\newtheorem{rem}[thm]{Remark}
\numberwithin{equation}{section}
\newcommand{\erf}{{\mbox{erf}}}
\newcommand{\ir}{I\times\R}
\newcommand{\zit}{Z_I(t)}
\newcommand{\itxi}{\int_{t_0}^t\int_{\sum_{i=1}^{2k+2}\xi_i=0}}
\newcommand{\symb}
{1-\frac{m(\xi_2+\xi_3+\cdots+\xi_{2k+2})}{m(\xi_2)m(\xi_3)\cdots m(\xi_{2k+2})}}
\newcommand{\hs}{H_x^s}
\newcommand{\dhs}{\dot H_x^s}
\newcommand{\ho}{\dot H_x^1}
\newcommand{\ulam}{u^{\lambda}}
\newcommand{\lxt}{L_x^2}
\newcommand{\R}{\mathbb R}
\newcommand{\C}{\mathbb C}
\begin{document}

\title[Correlation Estimates and Applications.]{Tensor products and Correlation Estimates with applications to Nonlinear Schr\"odinger equations.}
\author{J. Colliander}
\thanks{J.C. was supported in part by NSERC grant RGP250233-07.}
\address{Department of Mathematics, University of Toronto, Toronto, ON, Canada M5S 2E4}
\email{\tt colliand@math.toronto.edu}

\author{M. Grillakis}
\address{Department of Mathematics, University of Maryland,
College Park, MD, 20742} \email{\tt mng@math.umd.edu}

\author{N. Tzirakis}
\address{Department of Mathematics, University of Illinois at Urbana-Champaign, Urbana, IL, 61801}
\email{tzirakis@math.uiuc.edu}
\date{November 28, 2007}

\subjclass{}

\keywords{}
\begin{abstract}
We prove new interaction Morawetz type (correlation) estimates in one and two dimensions. In dimension two 
the estimate corresponds to the nonlinear diagonal analogue of Bourgain's bilinear refinement of Strichartz. For the 2d case we 
provide a proof in two different ways.
 First, we follow the original approach of Lin and Strauss but applied
 to tensor products of solutions. We then demonstrate the proof  
using commutator vector operators acting on the conservation laws of the equation. This method can be generalized to obtain
 correlation estimates in all dimensions. In one dimension we use the Gauss-Weierstrass summability method acting on the conservation laws.
 We then apply the 2d estimate to 
nonlinear Schr\"odinger equations and derive a direct proof of
Nakanishi's $H^1$ scattering result  
for every $L^{2}$-supercritical nonlinearity. We also prove scattering
below the energy space for a certain class of $L^{2}$-supercritical  
equations.   
\end{abstract}
\maketitle

\section{Introduction}
In this paper we obtain new{\footnote{The same estimates have been
    independently and simultaneously (see \cite{col} and \cite{pv1})
obtained by F. Planchon and L. Vega \cite{pv} with different proofs.}} a priori estimates for solutions of the
nonlinear Schr\"odinger equation in one and two dimension. We also provide
 a systematic way to obtain the known interaction a priori estimates for dimensions higher than three. 
These estimates are monotonicity formulae that take advantage of the conservation of the momentum of the equation. 
Due to the pioneering work \cite{cm}, estimates of this type are
referred to as {\it{Morawetz estimates}}  in the literature. We then
apply these estimates to study the global behavior
 of solutions to the nonlinear Schr\"odinger equation. 
To be more precise we want to study
 the global-in-time behavior of solutions to the following initial value problem
\begin{equation}\label{nls}
\left\{
\begin{matrix}
iu_{t}+ \Delta u -|u|^{p-1}u=0, & x \in {\mathbb R^n}, & t\in {\mathbb R},\\
u(x,0)=u_{0}(x)\in H^{s}({\mathbb R^n}).
\end{matrix}
\right.
\end{equation}
with $p>1.$ Here we investigate the $L^2$-supercritical equation in
two dimensions under the natural scaling of the equation,  
and thus we restrict $p$ to $p>3$. Scaling refers to the fact that if
$u(x,t)$ is a solution to \eqref{nls} then  
$$u^{\lambda}(x,t)=\lambda^{-\frac{2}{p-1}}u(\frac{x}{\lambda},\frac{t}{\lambda^{2}})$$ is also a solution. The problem is then called
 $H^{s}$-critical if the scaling leaves the homogeneous $\dot{H}^{s}$
 norm invariant.  
This happens exactly when $s=\frac{n}{2}-\frac{2}{p-1}$. We denote the
critical index by $s_{c}$ and thus
 \begin{equation}\label{index}
s_c=\frac{n}{2}-\frac{2}{p-1}.
\end{equation}
\\
\\
The problem of the existence of local-in-time solutions for \eqref{nls} 
is well studied by many authors and a summary of the results can been found in \cite{jb2}, \cite{tc}, 
and \cite{tt}. Thus depending on the strength of the nonlinearity and
the dimension, the local solutions are well understood. In this paper
we will consider problems that are locally well-posed and refer the
reader to \cite{tc},  
and \cite{tt} for the proofs. 
The local well-posedness definition that we use here reads as
follows: for any choice of initial data $u_0 \in H^s$, there exists
a positive time $T = T(\|u_0\|_{H^{s}})$ depending only on the
norm of the initial data, such that a solution to the initial
value problem exists on the time interval $[0,T]$, it is unique in
a certain Banach space of functions $X\subset C([0,T],H^s_{x})$,
and the solution map from $H^s_{x}$ to $C([0,T],H^s_{x})$ depends
continuously on the initial data on the time interval $[0,T]$. If
the time $T$ can be proved to be arbitrarily large, we say that the Cauchy problem is globally well-posed. 
To extend a local solution to a global one, we need some a priori information about the
 norms of the solution. This usually comes from conservation laws. For  example solutions of 
equation \eqref{nls} 
satisfy mass conservation
\\
\begin{equation}
\|u(t)\|_{L^{2}}=\|u_{0}\|_{L^{2}}\label{mass}
\end{equation}
\\
and smooth solutions also satisfy energy conservation
\\
\begin{equation}\label{energy}
E(u)(t)=\frac{1}{2}\int |\nabla u(t)|^{2}dx+\frac{1}{p+1}\int |u(t)|^{p+1}dx=E(u_{0}). 
\end{equation}
\\
These two conservation laws identify $H^{1}$ and $L^{2}$ as  important spaces concerning the initial value problem \eqref{nls}. We can use them
 to extend the local solutions for all times. For example based on 
energy conservation we immediately get that for initial data $u(t_{0})=u_{0} \in H^{1}$ we have that $\|u(t)\|_{H^{1}} \leq C(u_{0},t_{0})$ for all times.
In general assume that we have an a priori estimate of the form
\\
$$\|u(t)\|_{H^{s}} \leq C(u_{0},t_{0}).$$
\\
In order to use this information to iterate the local solutions, 
the time of local resolution $T$, has to be estimated from below in terms of the norms of the initial data in
 $H^s$, $T \geq M(\|u_{0}\|_{H^s})$, for some strictly positive and non increasing function $M$. This is not the case for the $L^2$ norm of the 
$L^2-$critical problem
which corresponds to the case of $p=1+\frac{4}{n}$, since the local time depends not only on the norm of the initial data but also on the profile.
 On the other hand since the equation \eqref{nls} is energy subcritical in dimensions one and two for any $p$ we have that $T \geq M(\|u_{0}\|_{H^1})$. 
Thus one can iterate the local resolution
 and solve the Cauchy problem at time $t_{k-1}$ ($1\leq k < \infty$) with initial data $u(t_{k-1})$ up to time $t_{k}=t_{k-1}+T_{k}$ with local time
 $T_{k} \geq M(\|u(t_{k-1})\|_{H^1})$. Now if the series $\sum T_{k}$ converges, then on one hand $T_{k}$ tends to zero, but on the other hand 
$T_{k} \geq M(C(u_{0},t_{0},I))$ where $I=[t_{0},t_{0}+\sum T_{k}]$ which is a contradiction. 
Thus the series $\sum T_{k}$ diverges and $u$ can be continued for all times in $H^1$. 
\\
\\
In the situation that the Cauchy problem is globally well-posed, we can address the question of describing and classifying the asymptotic behavior in time
 for global solutions. A possible method to attack the question is to compare the given dynamics with suitably chosen simpler asymptotic dynamics.
 The method applies to a wide variety of dynamical systems and in
 particular to some systems defined by nonlinear PDE, and give rise to
 the scattering theory. 
 For the semilinear problem \eqref{nls}, the first obvious candidate is the free dynamics generated by the group $S(t)=e^{it\Delta}$. The comparison
 between the two dynamics gives rise to the following two questions.
\\
\\
(1) Let $v_{+}(t)=S(t)u_{+}$ be the solution of the free equation. Does there exist a solution $u$ of equation \eqref{nls} which behaves asymptotically
 as $v_{+}$ as $t \rightarrow \infty$, typically in the sense that for a Banach space $X$
\begin{equation}\label{scat}
\|u(t)-v_{+}\|_{X} \rightarrow 0, \ \ \mbox{when $t \rightarrow \infty$.}
\end{equation}   
If this is true then one can define the map $\Omega_{+}: u_{+} \rightarrow u(0)$. The map is called the wave operator and
 the problem of existence of $u$ for given $u_{+}$ is referred to as the problem of the {\it{existence of the wave operator}}. The analogous problem
 arises as $t \rightarrow -\infty$.
\\
\\
(2) Conversely, given a solution $u$ of \eqref{nls}, does there exist an asymptotic state $u_{+}$ such that $v_{+}(t)=S(t)u_{+}$ behaves 
asymptotically as $u(t)$, typically in the sense of \eqref{scat}. If that is the case for any $u$ with initial data in $X$ for some $u_{+} \in X$,
 one says that {\it{asymptotic completeness}} holds in $X$.

Asymptotic completeness is a much harder problem than the existence of the wave operators except in the case of small data theory which follows
 pretty much from the iteration method proof of the local well-posedness. Asymptotic completeness requires a repulsive nonlinearity and
 usually proceeds through the derivation of a priori estimates for general solutions. As we have already mentioned, these estimates take advantage of the momentum conservation law
\begin{equation}
\vec{p}(t)=\Im \int_{\Bbb R^n}\bar{u}\nabla u dx.
\end{equation}
\\
We can establish for example the generalized virial inequality{\footnote{In fact, one can write an identity.}}
 \cite{ls},
\begin{equation}\label{linstr}
\\
\int_{0}^{T}\int_{\Bbb R^n}(-\Delta \Delta a(x))|u(x,t)|^2dxdt+\frac{2(p-1)}{p+1}\int_{0}^{T}\int_{\Bbb R^n}2 \Delta a {|u(x,t)|^{p+1}} dxdt \lesssim
 \sup_{[0,T]}|M_{a}(t)|
\end{equation}
\\
where $a(x)$ is a convex function, $u$ is a solution to \eqref{nls} and $M_{a}(t)$ is the Morawetz action defined by
\\
\begin{equation}\label{mora}
M_{a}(t)=2\int_{\Bbb R^n}\nabla a \cdot \Im(\bar{u}(x)\nabla u(x))dx
\end{equation}
\\
One can use this identity as a starting point and derive a priori 
interaction Morawetz inequalities. These estimates can be achieved by translating the origin in the integrands of (1.7) to an arbitrary 
point $y$ and then averaging \cite{ckstt4} against the $L^1$ mass
density $|u(y)|^2dy$, or by considering{\footnote{This idea emerged in
    a conversation between Andrew Hassell and Terry Tao.}} the tensor
product of two solutions of  
\eqref{nls} and use the fact that the operation of tensoring the two solutions, results again in a defocusing nonlinearity. Both of these methods
 depends on the fact that for dimension $n \geq 3$ the distribution $-\Delta \Delta |x|$ is positive. The estimate one can obtain for $n \geq 3$
 is 
\\
\begin{equation}\label{interac}
\|D^{-\frac{n-3}{2}}(|u|^2)\|_{L_{t}^2L_{x}^{2}} \lesssim \|u\|_{L_{t}^{\infty}\dot{H}_{x}^{\frac{1}{2}}}\|u\|_{L_{t}^{\infty}L_x^2}
\end{equation}
\\
For $n=3$ this estimate reduces to
\\
\begin{equation}\label{mor3}
\|u\|_{L_t^4L_x^4}^2 \lesssim \|u\|_{L_{t}^{\infty}\dot{H}_{x}^{\frac{1}{2}}}\|u\|_{L_{t}^{\infty}L_x^2}.
\end{equation}
\\
This estimate is historically the first interaction Morawetz estimate
and was obtained in \cite{ckstt4}. For $n \geq 4$ it was derived in 
\cite{vis1}, \cite{mv}. The estimate in three dimensions has important consequences. It can be used
 to prove scattering in the energy space for the 3d problem for any
 $p-1>\frac{4}{3}$. This result was obtained in \cite{gv1}, but the
 estimate \eqref{mor3}
 gives a very short and elegant proof. One can also combine this
 estimate with the ``$I$-method'' to show \cite{ckstt4} 
global well-posedness and scattering  to the 3d cubic nonlinear
Schr\"odinger equation below the energy space.    

For solutions below the energy threshold the first result of global
well-posedness was 
established in \cite{jb1} by decomposing the initial data into low frequencies and high frequencies and 
estimating separately the evolution of low and high frequencies. The
key observation was that the high frequencies behave ``essentially
unitarily''. The
method was applied to the cubic equation in two dimensions and 
established that the solution is globally well-posed with
 initial data in $H^s(\Bbb R^2)$ for any $s>\frac{3}{5}$. Moreover if we denote with $S_{t}$ the nonlinear flow and with 
$S(t)=e^{it\Delta}u_{0}$ the linear group, the high/low frequency method 
shows in addition that $\left( S_t-S(t)\right)u_{0} \in H^{1}(\Bbb R^2)$ 
for all times provided $u_0 \in H^s, s > \frac{3}{5}$. 
Inspired by
\cite{jb1}, the $I$-method (see \cite{ckstt4} and references therein) 
is based on the almost conservation
of a certain modified energy functional. The idea is to replace
the conserved quantity $E(u)$ which is no longer available for
$s<1$, with an ``almost conserved'' variant $E(Iu)$ where $I$ is a
smoothing operator of order $1-s$ which behaves like the identity
for low frequencies and like a fractional integral operator for
high frequencies. Thus, the operator $I$ maps $H_{x}^{s}$ to
$H_{x}^{1}$. Notice that $Iu$ is not a solution to \eqref{nls}
and hence we expect an energy increment. This increment is in fact
quantifying $E(Iu)$ as an ``almost conserved'' energy. The key is
to prove that on intervals of fixed length, where local
well-posedness is satisfied, the increment of the modified energy
$E(Iu)$ decays with respect to a large parameter $N$. (For the
precise definition of $I$ and $N$ we refer the reader to Section
$2$.) This requires delicate estimates on the commutator between
$I$ and the nonlinearity.

In addition to the $H^1$ scattering problem, a frequency localized version
 of \eqref{mor3} was a main ingredient in the proof that the $\dot{H}^1$-critical NLS is globally well-posed and scatters
 in 3d, \cite{ckstt5}. Note that if \eqref{interac} were true for $n=2$ we would have
\\
\begin{equation}\label{interac2}
\|D^{\frac{1}{2}}(|u|^2)\|_{L_{t}^2L_{x}^{2}} \lesssim \|u\|_{L_{t}^{\infty}\dot{H}_{x}^{\frac{1}{2}}}\|u\|_{L_{t}^{\infty}L_x^2}.
\end{equation} 
\\
This estimate can be consider as the diagonal, nonlinear analogue of
the bilinear refinement of 
Strichartz in \cite{jb1}, and would have many interesting
 applications. A weaker local-in-time estimate was recently obtained \cite{fg}:
\\
\begin{equation}\label{Mor1}
\|u\|_{L^4_{T}L^4_x}^{2} \lesssim
T^{\frac{1}{4}}\|u_0\|_{L^2_x}\|u\|_{L^\infty_T\dot{H}^{\frac{1}{2}}}.
\end{equation} 
\\
This estimate is very useful since the $L_t^{4}L_x^4$ norm is a Strichartz norm and can help one to get a global
 solution assuming the control on the local norms. 
Note the restriction that $u$ has to be at least as regular as an $H^{1/2}$ solution. 
This estimate was recently improved \cite{cgt} to
\\
\begin{equation}\label{newMor1}
\|u\|_{L^4_{T}L^4_x}^{2} \lesssim
T^{\frac{1}{6}}\|u_0\|_{L^2_x}^{\frac{4}{3}}\|u\|_{L^\infty_T\dot{H}^{\frac{1}{2}}}^{\frac{2}{3}}.
\end{equation}
\\
This a priori estimate along with the $I$-method was used 
to establish global well-posedness for the cubic nonlinear Schr\"odinger equation in 2d for any $s>\frac{2}{5}$. Note that these refinements suggest
 the global Strichartz estimate which would immediately imply for $\theta =0$, global well-posedness and scattering for the $L^2$-critical problem
\\
\begin{equation}\label{newMor2}
\|u\|_{L^4_{T}L^4_x}^{2} \lesssim
T^{\frac{\theta}{2}}\|u_0\|_{L^2_x}^{2(1-\theta)}\|u\|_{L^\infty_T\dot{H}^{\frac{1}{2}}}^{2\theta}.
\end{equation}
\\
Unfortunately an argument in \cite{fg} shows that using the above
methods, estimate \eqref{newMor1} is the best possible.

A byproduct of our analysis in \cite{cgt} provides a new estimate in one dimension which reads
\\
\begin{equation}\label{1dmor}
\|u\|_{L^6_{T}L^6_x} \lesssim
T^{\frac{1}{6}}\|u_0\|_{L^2_x}\|u\|_{L^\infty_T\dot{H}^{\frac{1}{2}}}^{\frac{1}{3}}.
\end{equation}
\\
This estimate was used  to prove \cite{dps} global well-posedness for
the 1d $L^2$-critical problem
for any $s>\frac{1}{3}$. Note that for all the above problems the solution is below the $H^{1/2}$ threshold and the a priori estimates are not
 applicable. One has to introduce a smooth cut-off of the initial data and control certain error terms using multilinear harmonic analysis techniques. 
\\
\\
In this paper we prove that \eqref{interac2} is indeed
 true. It is proved by refining the tensor product approach that we
 mentioned above. Using Sobolev embedding, an immediate consequence
 of \eqref{interac2} is the following
\\
\begin{equation}\label{2dmor}
\|u\|_{L^4_{t}L^8_x}^{2} \lesssim
\|u_0\|_{L^2_x}\|u\|_{L^\infty_T\dot{H}^{\frac{1}{2}}}.
\end{equation}
\\
One can use this estimate to obtain a simplified proof of the $H^{1}$ scattering
result in \cite{kn}, in two dimensions for any $p>3$ which avoids the
induction on energy argument and produces a better bound on the spacetime size of the solution.
 For completeness
 we present the proof in Section 4.

We now state the main Theorems of this paper. The estimates contained in Theorems 1.1 and 1.2
below were simultaneously and independently obtained (\cite{pv1}, \cite{pv}) by Planchon and Vega.
\begin{thm}[Correlation estimate in two dimensions]
\label{thm1}
 Let $u$ be an $H^{\frac{1}{2}}$ solution to \eqref{nls} on the spacetime slab $I\times {\Bbb R}^2$. Then
\begin{equation}\label{22d}
\|D^{\frac{1}{2}}(|u|^2)\|_{L_{t}^2L_{x}^{2}} \lesssim \|u\|_{L_{t}^{\infty}\dot{H}_{x}^{\frac{1}{2}}}\|u\|_{L_{t}^{\infty}L_x^2}
\end{equation}
\end{thm}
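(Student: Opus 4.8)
The plan is to view $|u|^2$ as the diagonal restriction of the tensor product $w(t,x,y):=u(t,x)\,u(t,y)$ and to carry out the Lin--Strauss virial computation \eqref{linstr}--\eqref{mora} for $w$ on the four-dimensional slab $I\times\mathbb R^4$. The point of passing to the tensor product is that tensoring preserves defocusing: a direct computation gives
\[
i w_t+\Delta_{(x,y)}w=\bigl(|u(t,x)|^{p-1}+|u(t,y)|^{p-1}\bigr)w,
\]
a Schr\"odinger equation on $\mathbb R^4$ with a nonnegative, repulsive potential-type nonlinearity. I would run the virial identity for $w$ with the convex weight $a(x,y)=|x-y|$ (its Hessian on $\mathbb R^4$ is positive semidefinite, though degenerate). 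Since $w$ factorizes, the Morawetz action \eqref{mora} collapses to the interaction momentum
\[
M_a(t)=2\int_{\mathbb R^4}\nabla a\cdot\Im(\bar w\,\nabla w)
=4\iint_{\mathbb R^2\times\mathbb R^2}\frac{x-y}{|x-y|}\cdot\Im\bigl(\bar u\,\nabla u\bigr)(t,x)\,|u(t,y)|^2\,dx\,dy,
\]
exactly the quantity used in \cite{ckstt4}.

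Differentiating $M_a$ in time and using the mass and momentum balance laws for $w$: the ``current'' term $4\,\mathrm{Hess}(a){:}\,\Re(\nabla\bar w\otimes\nabla w)$ is nonnegative (both factors are positive semidefinite), and the nonlinearity contributes a favorably-signed term (the analogue of the $\Delta a\,|w|^{p+1}$ term in \eqref{linstr}, with $\Delta a>0$). Discarding these nonnegative terms, one is left with
\[
\int_I\!\int_{\mathbb R^4}\bigl(-\Delta^2_{(x,y)}|x-y|\bigr)\,|w(t,x,y)|^2\,dx\,dy\,dt
\;\lesssim\;\sup_{t\in I}\,|M_a(t)|.
\]

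Evaluating the left-hand side is the heart of the matter, and is where the argument departs from the $n\ge 3$ theory. The distribution $-\Delta^2_{(x,y)}|x-y|$ is \emph{not} pointwise nonnegative on $\mathbb R^4$ — in the difference variable it is a negative constant times the finite part of $|x-y|^{-3}$ — so one cannot argue as in \eqref{interac}, where this distribution is a positive multiple of $\delta$ for $n=3$ and a positive function for $n\ge 4$. However, the object that actually appears is the bilinear form $f\mapsto\langle(-\Delta^2|\cdot|)\ast f,f\rangle$, whose Fourier symbol on $\mathbb R^2$ is a positive constant times $|\xi|$; pairing against $|w|^2=|u|^2\otimes|u|^2$ and using Plancherel turns the left-hand side into a positive multiple of $\int_I\int_{\mathbb R^2}|\xi|\,\bigl|\widehat{|u|^2}(t,\xi)\bigr|^2\,d\xi\,dt=c\,\|D^{1/2}(|u|^2)\|_{L^2_tL^2_x}^2$. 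This is the ``refinement of the tensor-product approach'' promised in the introduction. For the right-hand side one bounds the interaction momentum by $|M_a(t)|\lesssim\|u(t)\|_{\dot H^{1/2}_x}^2\,\|u(t)\|_{L^2_x}^2$; this is sharper than the crude $\|u\|_{L^2}^3\|u\|_{\dot H^1}$ estimate and requires genuinely exploiting the (two-order) smoothing, and the oddness, of the kernel $z/|z|$. Taking $\sup_{t\in I}$ then yields \eqref{22d}.

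The main obstacle is legitimizing this distributional evaluation of the mass--mass term: because $|x-y|^{-3}$ fails to be locally integrable on $\mathbb R^2$ the naive double integral diverges, and it is only after regularization — replacing $a$ by $a_\varepsilon(x,y)=(|x-y|^2+\varepsilon^2)^{1/2}$, or equivalently working throughout on the Fourier side — that the correct (and favorable) sign emerges, through $\widehat{|\cdot|^{-3}}(\xi)\simeq-|\xi|$ on $\mathbb R^2$. Secondary technical points are the approximation argument required to apply the identity to merely $H^{1/2}$ solutions, and the careful handling of $M_a(t)$ at low frequencies, where $z/|z|$ tends to a constant.
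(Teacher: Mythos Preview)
Your proposal is essentially correct and lands very close to the paper's \emph{second} proof (Section~3.4), though you have phrased it in the ``weight $a$'' language of the first proof. The paper's Section~3.4 argument rewrites the interaction Morawetz action as $M(t)=\langle \vec X\rho\,|\,\vec p\rangle$ with $\vec X=[x;D^{-(n-1)}]$ the commutator vector operator on $\mathbb R^n$, and decomposes $\partial_t M=P_1+P_2+P_3+P_4$ using the local conservation laws for $u$ (not $w$). The term you single out --- your $\int(-\Delta^2_{(x,y)}|x-y|)\,|w|^2$ --- is their $P_3=\langle -\Delta\rho\,|\,(\nabla\!\cdot\!\vec X)\rho\rangle=(n-1)\langle D^2\rho\,|\,D^{-(n-1)}\rho\rangle$, which they evaluate by Plancherel as $(n-1)\|D^{-(n-3)/2}\rho\|_2^2$ \emph{without} ever forming the distribution $-\Delta^2 a$. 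This sidesteps the finite-part issue you correctly flag: since $(\nabla\!\cdot\!\vec X)\rho=(n-1)D^{-(n-1)}\rho$ is a bona fide Riesz potential of $\rho\in L^1$, the pairing $\langle -\Delta\rho,\,D^{-(n-1)}\rho\rangle$ is already well defined and one moves derivatives across by Plancherel rather than by integration by parts in physical space.

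The paper's \emph{first} proof (Section~3.3) takes a genuinely different route from yours. Rather than regularizing by $a_\varepsilon=(|x-y|^2+\varepsilon^2)^{1/2}$ and passing to Fourier, they build an explicit one-parameter family of convex radial weights $a_{r_0}$ (equal to $|x-y|$ for $|x-y|\ge r_0$, with a logarithmic correction near the diagonal) for which $-\Delta\Delta a_{r_0}=\tfrac{2\pi}{r_0}\delta - w_{r_0}$ with $w_{r_0}$ nonnegative and $\int w_{r_0}=\tfrac{2\pi}{r_0}$. The zero-mean structure lets them rewrite the mass--mass term as the manifestly nonnegative quantity $\tfrac12\iint w_{r_0}(|x_1-x_2|)\bigl(|u(x_1)|^2-|u(x_2)|^2\bigr)^2$, and sending $r_0\to 0$ recovers $\||u|^2\|_{\dot H^{1/2}}^2$ via the difference-quotient characterization $\iint\frac{(f(x)-f(y))^2}{|x-y|^3}\,dx\,dy$ rather than via Plancherel. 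Their approach buys a completely real-variable argument (no Fourier), at the cost of having to verify convexity of the constructed $a_{r_0}$ by hand; yours (and the paper's Section~3.4) is shorter once one accepts the distributional computation $\widehat{-\Delta^2|\cdot|}(\xi)=c\,|\xi|$.

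Two small corrections. First, your remark about ``careful handling of $M_a(t)$ at low frequencies, where $z/|z|$ tends to a constant'' is off target: the bound $|M_a(t)|\lesssim\|u\|_{\dot H^{1/2}}^2\|u\|_{L^2}^2$ follows simply from $\|\nabla a\|_{L^\infty}\le 1$ and the Hardy-type estimate $\|\Im(\bar u\nabla u)\|_{L^1}\lesssim\|u\|_{\dot H^{1/2}}^2$, with no frequency decomposition needed. Second, if you do insist on the $a_\varepsilon$ regularization rather than working on the Fourier side from the start, you will need a monotone or dominated convergence argument to justify $\int(-\Delta^2 a_\varepsilon)|w|^2\to c\|D^{1/2}(|u|^2)\|_2^2$; the paper's two proofs each avoid this step, by the mechanisms described above.
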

\begin{thm}[Correlation estimates in one dimension]
\label{thm2}
 Let $u$ be an $H^{1}$ solution to \eqref{nls} on the spacetime slab $I\times \Bbb R$. Then
\begin{equation}\label{11d}
\|\partial_x(|u|^2)\|_{L_{t}^2L_{x}^{2}} \lesssim \|u\|_{L_{t}^{\infty}\dot{H}_{x}^{1}}^{\frac{1}{2}}\|u\|_{L_{t}^{\infty}L_x^2}^{\frac{3}{2}}
\end{equation}
and
\begin{equation}\label{1db}
\|u\|_{L_t^{p+3}L_x^{p+3}}^{p+3}\lesssim \|u\|_{L_t^{\infty}L_x^2}^3\|u\|_{L_t^{\infty}\dot{H}_x^{1}}
\end{equation}
\end{thm}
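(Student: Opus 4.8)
The plan is to prove the one-dimensional Morawetz identity directly from the equation, following the Lin--Strauss method but adapted to $n=1$, where the obstruction that $-\Delta\Delta|x|$ is not a positive distribution is replaced by a \emph{favorable} sign: in one dimension $a(x)=|x|$ gives $a''(x)=2\delta_0$ and $-a''''(x)$ is actually a negative multiple of $\delta_0''$, so the naive approach needs the Gauss--Weierstrass (heat-kernel) regularization mentioned in the abstract. Concretely, I would fix the Morawetz action $M_a(t)=2\int_{\R}a'(x)\,\Im(\bar u\,\partial_x u)\,dx$ and compute $\frac{d}{dt}M_a(t)$ using \eqref{nls}; the standard computation produces a term $\int a''(x)|\partial_x u|^2\,dx$, a term $-\tfrac14\int a''''(x)|u|^2\,dx$, and a nonlinear term proportional to $\frac{p-1}{p+1}\int a''(x)|u|^{p+1}\,dx$. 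The point of the \emph{interaction} version is to replace $a(x)$ by the averaged/tensored weight built from $|x-y|$ acting on $u(x)\otimes u(y)$, so that effectively $a''$ becomes a delta function on the diagonal $x=y$ and the three terms collapse to quantities localized at $x=y$: $\int|u(x,t)|^2|\partial_x u(x,t)|^2\,dx$-type expressions, $\int|u(x,t)|^6$-type expressions (the $p=5$ cubic-in-$|u|^2$ case giving exactly the $L^6$ term), and crucially the term $\int(\partial_x(|u|^2))^2\,dx$ coming from the momentum-bracket piece $\Im(\bar u\,\partial_x u)$ squared on the diagonal.

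The key steps, in order, are: (i) set up the tensor product $w(x,y,t)=u(x,t)u(y,t)$ on $\R^2$, which solves a defocusing NLS-type system, and write the two-dimensional interaction Morawetz quantity with weight $a(x-y)=|x-y|$; (ii) regularize $|x-y|$ by $\sqrt{\varepsilon+(x-y)^2}$ or equivalently use the Gauss--Weierstrass summability to make all integrations by parts legitimate for an $H^1$ solution; (iii) carry out the time-differentiation and integration by parts to obtain the monotonicity identity, collecting the diagonal terms; (iv) observe that the Galilean/momentum term contributes precisely $\|\partial_x(|u|^2)\|_{L^2_{t,x}}^2$ with a good sign, the quartic gradient term $\int|u|^2|\partial_x u|^2$ is nonnegative and can be discarded, and the nonlinear term $\tfrac{p-1}{p+1}\int|u|^{p+3}$ is nonnegative for the defocusing sign and hence bounds \eqref{1db}; (v) bound the boundary term $\sup_t|M_a(t)|$ by $\|u\|_{L^\infty_t L^2_x}^3\|u\|_{L^\infty_t\dot H^1_x}$ using Cauchy--Schwarz, $|a'|\le 1$, and the elementary inequality $|\Im(\bar u\partial_x u)|\le|u||\partial_x u|$ together with a Gagliardo--Nirenberg estimate in one dimension to absorb the extra mass factor; (vi) send $\varepsilon\to0$.

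For \eqref{11d} the same identity is used but now one keeps the term $\|\partial_x(|u|^2)\|_{L^2_{t,x}}^2$ on the left and estimates everything else; the scaling-consistent bound $\|u\|_{L^\infty_t\dot H^1_x}^{1/2}\|u\|_{L^\infty_t L^2_x}^{3/2}$ is exactly what the boundary term $\sup_t|M_a(t)|$ delivers after one application of Cauchy--Schwarz, so \eqref{11d} is really the statement that the left side is controlled by the Morawetz action alone. For \eqref{1db}, since the weight's second derivative is a positive delta on the diagonal and $p>1$ with defocusing sign, the $L^{p+3}_{t,x}$ term appears with a positive coefficient on the left and is likewise bounded by $\sup_t|M_a(t)|\lesssim \|u\|_{L^\infty_t L^2_x}^3\|u\|_{L^\infty_t\dot H^1_x}$.

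The main obstacle I expect is step (ii)–(iii): justifying the integration by parts and the cancellations rigorously for merely $H^1$ data, since $|x-y|$ is not smooth and the fourth-derivative term is a distribution of order two, so one must track the regularization carefully and verify that the regularized quartic and potential terms retain their signs uniformly in $\varepsilon$; this is precisely where the Gauss--Weierstrass summability is needed, and getting the error terms from the heat-kernel smoothing to vanish (rather than merely stay bounded) as $\varepsilon\to0$ will be the delicate point. A secondary technical nuisance is confirming that the diagonal restriction of the tensored momentum term really reproduces $(\partial_x(|u|^2))^2$ with the correct constant and sign; I would double-check this on the Fourier side or by a direct computation with $w=u\otimes u$.
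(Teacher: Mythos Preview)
Your overall strategy---tensor two copies of the solution, use an interaction Morawetz action with a regularized weight depending on $x-y$, differentiate in time using the local conservation laws, show all resulting terms are nonnegative, and bound the action by $\|u\|_{L^\infty_t L^2_x}^3\|u\|_{L^\infty_t\dot H^1_x}$---is exactly the paper's approach. The paper carries this out in Section~3.5, and your steps (i)--(vi) track that argument closely.

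There are, however, two concrete points where your sketch diverges from the paper and where you would likely get stuck. First, the paper does \emph{not} regularize $|x-y|$ by $\sqrt{\varepsilon+(x-y)^2}$; instead it takes the weight derivative itself to be the scaled error function $\mathrm{erf}((x-y)/\varepsilon)$, so that the second derivative of the weight is exactly the heat kernel $\tfrac{1}{\varepsilon}e^{-(x-y)^2/\varepsilon^2}$. This matters for precisely the obstacle you flag: with your $\sqrt{\varepsilon+z^2}$ choice, $-a_\varepsilon''''$ is \emph{not} sign-definite for $\varepsilon>0$, whereas with the heat kernel the key term $P_3=\langle X'\rho\mid -\rho_{xx}\rangle$ becomes $\int \xi^2|\hat\rho(\xi)|^2 e^{-\varepsilon\xi^2}\,d\xi$ by Plancherel, manifestly positive for every $\varepsilon$ and monotone as $\varepsilon\downarrow 0$. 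The paper also uses the fluid form of the stress tensor, $\sigma=\tfrac{1}{\rho}(p^2+\rho_x^2)$, which splits $\dot M$ cleanly into four nonnegative pieces $P_1,\dots,P_4$.

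Second, your attribution of the term $\int(\partial_x(|u|^2))^2$ to ``the momentum-bracket piece $\Im(\bar u\,\partial_x u)$ squared on the diagonal'' is incorrect: $\Im(\bar u u_x)$ squared is $p^2$, not $(\partial_x\rho)^2=\tfrac14(\partial_x|u|^2)^2$. In the paper's decomposition the momentum cross-term is $P_2=\langle X'\rho\mid\tfrac{1}{\rho}p^2\rangle-\langle X'p\mid p\rangle$, which is shown to be nonnegative by a two-point symmetrization and actually \emph{vanishes} as $\varepsilon\to 0$. The $(\partial_x|u|^2)^2$ contribution comes from $P_3$ (the $-\rho_{xx}$ term in the momentum flux), and an equal contribution from $P_1=\langle X'\rho\mid\tfrac{1}{\rho}\rho_x^2\rangle$ in the limit. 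The nonlinear term $P_4$ gives $\|u\|_{L^{p+3}_{t,x}}^{p+3}$ as you say. Once you adopt the error-function weight and the fluid decomposition, your worries in step (ii)--(iii) evaporate: every $P_i$ is nonnegative for all $\varepsilon>0$, so you may keep only $P_3$ (or $P_4$), integrate in time, bound by $\sup_t|M(t)|$, and send $\varepsilon\to 0$ by monotone convergence.
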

\begin{thm}[Asymptotic completeness in $H^{1}(\Bbb R^2)$]
\label{thm3}
 Let $u_0\in H^1(\Bbb R^2)$.  Then, there exists a unique global solution $u$ to the initial value problem
\begin{equation}\label{nls1}
\begin{cases}
i u_t +\Delta u = |u|^{p-1}u, \quad p>1,\\
u(0,x) = u_0(x).
\end{cases}
\end{equation}
Moreover, if $p>3$ there exist $u_\pm\in H^1(\Bbb R^2)$ such that
$$
\|u(t)-e^{it\Delta}u_\pm\|_{H^1(\Bbb R^2)}\to 0 \quad \text{as } t\to \pm \infty.
$$
\end{thm}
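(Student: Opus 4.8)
The plan is to split the statement into two parts: (i) global well-posedness in $H^1(\mathbb{R}^2)$ for every $p>1$, and (ii) $H^1$-scattering when $p>3$. Part (i) is essentially contained in the introduction: the equation \eqref{nls1} is energy-subcritical in two dimensions for every $p$, so the local time of existence depends only on $\|u_0\|_{H^1}$, and mass and energy conservation keep $\|u(t)\|_{H^1}$ bounded on any finite interval; the iteration-of-local-solutions argument sketched in the introduction then gives a global solution, with uniqueness in the appropriate Banach space $X\subset C(I,H^1)$. I would state this briefly and move on to the substance, which is scattering.

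For part (ii), I would run the standard scattering machinery: it suffices to show that $u$ has finite global Strichartz norms, in particular finite $L^4_tL^8_x(\mathbb{R}\times\mathbb{R}^2)$ norm, since then the Duhamel formula $e^{-it\Delta}u(t) = u_0 - i\int_0^t e^{-is\Delta}(|u|^{p-1}u)(s)\,ds$ converges in $H^1$ as $t\to\pm\infty$ to some $u_\pm$, and $\|u(t)-e^{it\Delta}u_\pm\|_{H^1}\to 0$ by dominated convergence and Strichartz. So the whole game is the global spacetime bound. Here is where Theorem~\ref{thm1} enters: from \eqref{22d} and Sobolev embedding $\dot H_x^{1/2}(\mathbb{R}^2)\hookrightarrow L^4_x$ one gets \eqref{2dmor}, i.e. $\|u\|_{L^4_tL^8_x}^2\lesssim \|u_0\|_{L^2}\|u\|_{L^\infty_t\dot H^{1/2}_x}$, and the right-hand side is finite and controlled by conserved quantities (mass, and $\dot H^{1/2}_x$ interpolated between conserved $L^2$ and the energy-bounded $\dot H^1_x$, using $p>3$ so that $s_c>1/2$ does not obstruct — actually $\dot H^{1/2}$ sits between $L^2$ and $\dot H^1$, both of which are uniformly bounded). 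Thus $u\in L^4_tL^8_x(\mathbb{R}\times\mathbb{R}^2)$ a priori.

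From the a priori $L^4_tL^8_x$ bound I would upgrade to finiteness of the full set of Strichartz norms needed to close scattering at the $H^1$ level. The mechanism is the usual one: partition $\mathbb{R}$ into finitely many subintervals $I_j$ on each of which $\|u\|_{L^4_{I_j}L^8_x}$ is smaller than a threshold $\varepsilon$ determined by the local theory; on each such interval a continuity/bootstrap argument in the Strichartz space controlling $\nabla u$ (e.g. $\|\nabla u\|_{L^4_tL^8_x}$ together with $\|\nabla u\|_{L^\infty_tL^2_x}$) shows these norms are finite, with constants depending only on $\|u_0\|_{H^1}$ and $\varepsilon$; summing over the finitely many intervals gives global finiteness. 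One must check that the nonlinearity $|u|^{p-1}u$ can be estimated, via fractional Leibniz/chain rule, by a product of one factor of $\nabla u$ in a Strichartz norm and $(p-1)$ factors of $u$ in norms dominated by $\|u\|_{L^4_tL^8_x}$ and $\|u\|_{L^\infty_t H^1_x}$; the constraint $p>3$ is exactly what makes the available exponents fit (this is the $L^2$-supercritical regime where the interaction Morawetz estimate has the right scaling). I expect the main obstacle — or at least the only place requiring genuine care rather than bookkeeping — to be this exponent juggling: verifying that for the full range $p>3$ (including large $p$) the nonlinear term closes in admissible Strichartz spaces using only the single a priori bound \eqref{2dmor} plus energy, possibly requiring an interpolation between \eqref{2dmor} and other Strichartz norms rather than \eqref{2dmor} alone, and handling the non-algebraic nonlinearity for non-odd-integer $p$ via a fractional chain rule. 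Once global Strichartz control is in hand, the existence of $u_\pm$ and convergence in $H^1$ is routine and I would dispatch it in a sentence.
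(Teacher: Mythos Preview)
Your proposal is correct and follows essentially the same route as the paper: use the correlation estimate (Theorem~\ref{thm1}) plus Sobolev embedding to obtain the global $L^4_tL^8_x$ bound \eqref{2dmor}, partition time into finitely many intervals on which this norm is small, run a continuity argument on each interval to upgrade to full $S^1$ Strichartz control, and then read off scattering from Duhamel. The paper carries out exactly the ``exponent juggling'' you flag---it puts $\langle\nabla\rangle u$ in $L^\infty_tL^2_x$ by H\"older, interpolates the remaining $|u|^{p-1}$ between $L^4_tL^8_x$ and an admissible Strichartz norm (reached via Sobolev embedding with the restriction $p>3+\tfrac{\epsilon}{4}$), and closes with $\|u\|_{S^1}\lesssim \|u_0\|_{H^1}+\delta^\epsilon\|u\|_{S^1}^{p-\epsilon}$---so no fractional chain rule is actually needed.
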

\begin{thm}[Asymptotic completeness below $H^{1}(\Bbb R^2)$]
\label{thm4}
 Let $u_0\in H^s(\Bbb R^2)$.  Then, 
for each positive integer $k \geq 2$, there exists a regularity threshold $s_k=1-\frac{1}{4k-3}$ such that the following initial value problem
\begin{equation}\label{nlsk}
\begin{cases}
i u_t +\Delta u = |u|^{2k}u, \quad k \geq 2,\\
u(0,x) = u_0(x).
\end{cases}
\end{equation}
is globally well-posed and scatters provided $s>s_k$. In particular there exists $u_\pm\in H^s(\Bbb R^2)$ such that
$$
\|u(t)-e^{it\Delta}u_\pm\|_{H^s(\Bbb R^2)}\to 0 \quad \text{as } t\to \pm \infty.
$$
\end{thm}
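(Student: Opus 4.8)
The plan is to prove Theorem~\ref{thm4} by combining the new correlation estimate \eqref{2dmor} with the $I$-method machinery, following the template established for the cubic equation in \cite{cgt} but upgraded to the general nonlinearity $|u|^{2k}u$. First I would rescale: given $u_0\in H^s(\Bbb R^2)$ with $s>s_k$, set $u^\lambda(x,t)=\lambda^{-1/k}u(x/\lambda, t/\lambda^2)$ so that the relevant modified energy $E(Iu^\lambda)(0)$ can be made smaller than a fixed absolute constant by taking $\lambda=\lambda(N)$ a suitable power of the frequency parameter $N$; here $I=I_N$ is the usual smoothing multiplier of order $1-s$. The local theory (energy-subcritical in 2d for every $p$, as noted in the introduction) together with the bound $E(Iu^\lambda)\lesssim 1$ gives local control of $\|u^\lambda\|_{L^\infty_t\dot H^{1/2}}$ and $\|u^\lambda\|_{L^\infty_t L^2}$ on each subinterval where the modified energy stays comparable to its initial size.

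The heart of the argument is the \emph{almost conservation law}: on an interval $J$ of local existence one shows
\[
\sup_{t\in J}\bigl|E(Iu^\lambda)(t)-E(Iu^\lambda)(0)\bigr|\lesssim N^{-\beta}\,\bigl(\text{local Strichartz norms of }Iu^\lambda\bigr),
\]
for some $\beta=\beta(s,k)>0$, by expanding the commutator between $I$ and the nonlinearity $|u|^{2k}u$ via the multilinear multiplier $\symb$ and estimating the resulting $(2k+2)$-linear form through a Littlewood--Paley decomposition $\bn$ with the bilinear Strichartz / interpolation tools standard in the $I$-method. Next I would feed \eqref{2dmor} into the bookkeeping: the correlation estimate provides an a priori global bound on $\|u^\lambda\|_{L^4_t L^8_x}$ in terms of $\|u^\lambda\|_{L^\infty_t L^2}$ (conserved) and $\|u^\lambda\|_{L^\infty_t\dot H^{1/2}}$ (controlled by the modified energy), which is exactly the Strichartz-type quantity needed to sum the energy increments over $O(N^{\beta})$ consecutive intervals without the increment ever doubling. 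This is what replaces the induction-on-energy argument and produces the explicit threshold $s_k=1-\frac{1}{4k-3}$: one balances the number of intervals one can afford ($\sim N^{\beta}$) against the scaling cost $\lambda\sim N^{(1-s)/s_c}$ and the decay rate $\beta$ of a single increment, and the inequality one needs to close the iteration for all time is precisely $s>1-\frac{1}{4k-3}$.

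Once the modified energy is shown to remain bounded for all time on the rescaled solution, undoing the scaling yields $\sup_t\|u(t)\|_{\dot H^{1/2}}<\infty$ and a global bound on $\|u\|_{L^4_t L^8_x}$ (equivalently $\|D^{1/2}(|u|^2)\|_{L^2_{t,x}}<\infty$), which gives global well-posedness in $H^s$. Scattering then follows from the now-global spacetime bound by a standard argument: finiteness of a subcritical Strichartz norm lets one run the local theory near $t=\pm\infty$, show $e^{-it\Delta}u(t)$ is Cauchy in $H^s$ as $t\to\pm\infty$, and define $u_\pm$; one can either argue directly at the level of $H^s$ or first prove $H^1$ scattering of a regularized piece as in the high/low splitting and then upgrade.

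The main obstacle I anticipate is the multilinear commutator estimate for the energy increment: unlike the cubic case, the nonlinearity $|u|^{2k}u$ produces a $(2k+2)$-linear expression, and one must extract a gain of $N^{-\beta}$ with a $\beta$ large enough (after optimizing over the many frequency configurations in $\bn$) to survive multiplication by the number of iteration steps and the scaling factor. Getting the sharp exponent $\frac{1}{4k-3}$ — rather than something weaker — will require carefully exploiting the smoothing in the symbol $\symb$ when the output frequency is much smaller than the largest input frequency, combined with the bilinear Strichartz refinement and the a priori $L^4_tL^8_x$ control from \eqref{2dmor} rather than crude Hölder; a lossy estimate at any stage degrades $s_k$. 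The remaining ingredients (rescaling bookkeeping, the continuity/bootstrap argument that the modified energy cannot suddenly jump, and the passage from a global Strichartz bound to scattering) are routine by now and I would treat them briefly.
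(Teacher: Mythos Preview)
Your proposal is correct and follows essentially the same route as the paper: rescale so that $E(Iu_0^\lambda)\ll 1$, prove an almost-conservation law with increment $N^{-1+}$ on intervals where $\|u\|_{L^4_tL^8_x}\le\eta$, use the new correlation estimate \eqref{2dmor} inside a bootstrap to bound the total $L^4_tL^8_x$ norm (hence the number $L\sim\lambda^{3(1-1/k)}$ of such intervals), and balance $LN^{-1+}\ll 1$ against $\lambda\sim N^{(1-s)/(s-s_c)}$ to obtain $s>1-\tfrac{1}{4k-3}$; scattering then follows by upgrading the global $L^4_tL^8_x$ bound to $\langle\nabla\rangle^s$-Strichartz control. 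One small correction: the paper's multilinear increment estimate (their Proposition~\ref{aclaw}) does \emph{not} use the bilinear Strichartz refinement---it relies only on the Coifman--Meyer multilinear multiplier theorem, Bernstein/Sobolev, and the hypothesis $\|u\|_{L^4_tL^8_x}\le\eta$, and already yields the sharp decay $N^{-1+}$; so you need not worry about squeezing out extra gain from bilinear estimates to reach the stated threshold.
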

We note that the estimates \eqref{11d} and \eqref{22d} come from the linear part of the solution and thus are true for any nonlinearity, while
 estimate \eqref{1db} comes form the nonlinear part. Actually the proof of Theorem 1 shows that the following estimate is true for any $n \geq 2$
(with the appropriate interpretations of course when the power of the derivative operator is positive or negative)

$$\|D^{-\frac{n-3}{2}}(|u|^2)\|_{L_{t}^2L_{x}^{2}} \lesssim \|u\|_{L_{t}^{\infty}\dot{H}_{x}^{\frac{1}{2}}}\|u\|_{L_{t}^{\infty}L_x^2}.$$

The basic idea behind these new estimates is to view
 the evolution equations as describing the evolution of a compressible dispersive fluid whose pressure is a function of the density. In this case
 the mass and momentum conservation laws describe the conservation laws of an irrotational compressible and dispersive fluid. There is a difference
 though in one and two dimensions. In two and higher dimensions we use
 commutator vector operators that 
 act on the conservation laws. In dimension one we use the heat
 kernel. 
More precisely, we introduce into the Morawetz action
 the error function 
\\
$$\erf(x)=\int_{-\infty}^{x}e^{-t^2}dt$$ 
\\
scaled by $\epsilon$ whose derivative is the heat kernel in one
dimension. We define 
 the operator that is given as a convolution with the error function and apply it to the conservation laws of the equation. Integration by parts
produces the solution of the one dimensional heat equation. Sending $\epsilon$ to zero we recover the estimates. This way the mass
 density plays the role of the initial data of the linear heat equation and the method is nothing else than the Gauss-Weierstrass summability
 method in classical Fourier Analysis. Again for details the reader can consult Section 4.  
\\

The rest of the paper is organized as follows. In Section 2 we
introduce some notation and state important propositions that we
will use throughout the paper. In Section 3 we present the proofs of the correlation estimates in all dimensions
 and provide a general framework for obtaining similar estimates. In
 Section 4 we prove the $H^{1}$ scattering result for the
 $L^2$-supercritical nonlinear Schr\"odinger in two dimensions
 (Theorem \ref{thm3}).  
Finally in Section 5 we prove global well-posedness and scattering below the energy space of the initial value problem \eqref{nlsk} (Theorem \ref{thm4}.)

\section{Notation}
In this section, we introduce notations and some basic estimates we will invoke throughout this paper. We use 
$A \lesssim B$ to denote an estimate of the form $A\leq CB$ for some constant $C$.
If $A \lesssim B$ and $B \lesssim A$ we say that $A \sim B$. We write $A \ll B$ to denote
an estimate of the form $A \leq cB$ for some small constant $c>0$. In addition $\langle a \rangle:=1+|a|$ and
$a\pm:=a\pm \epsilon$ with $0 < \epsilon \ll 1$.

We use $L_x^r(\Bbb R^n)$ to denote the Banach space of functions $f:\Bbb R^n\to \C$ whose norm
\\
$$\|f\|_r:=\Bigl(\int_{\Bbb R^n} |f(x)|^r dx \Bigr)^{1/r}$$
\\
is finite, with the usual modifications when $r=\infty$.

We use $L_t^qL_x^r$ to denote the spacetime norm
\\
$$\|u\|_{q,r}:=\|u\|_{L_t^qL_x^r(\Bbb R\times\Bbb R^n)}:=\Bigl(\int_{\Bbb R}\Bigl(\int_{\Bbb R^n} |u(t,x)|^r dx \Bigr)^{q/r}dt\Bigr)^{1/q},$$
\\
with the usual modifications when either $q$ or $r$ are infinity, or when the domain $\Bbb R \times \Bbb R^n$ is replaced by some smaller
spacetime region.  When $q=r$ we abbreviate $L_t^qL_x^r$ by $L^q_{t,x}$. We define the Fourier transform of $f(x) \in L_{x}^{1}(\Bbb R^n)$ by
\\
$$\hat{f}(\xi)=\int_{\mathbb R^n} e^{-2\pi i\xi x}f(x)dx.$$
\\
For an appropriate class of functions the following Fourier inversion formula holds:
\\
$$f(x)=\int_{\mathbb R^n} e^{2\pi i\xi x}\hat{f}(\xi)(d\xi).$$
\\
Moreover we know that the following identities are true:\\
\begin{enumerate}
\item $\|f\|_{L^{2}}=\|\hat{f}\|_{L^{2}}$, (Plancherel)\\
\item $\int_{\mathbb R^n} f(x)\bar{g}(x)dx=\int_{\mathbb R^n} \hat{f}(\xi)\bar{\hat{g}}(\xi) (d\xi)$, (Parseval)\\
\item $\widehat{fg}(\xi)=\hat{f}\star \hat{g}(\xi)=\int_{\mathbb R^n} \hat f(\xi-\xi_{1})\hat g(\xi_{1})d\xi_{1}$, (Convolution).\\
\end{enumerate}
We will also make use of the fractional differentiation operators $|\nabla|^s$ defined by
$$
\widehat{|\nabla|^sf}(\xi) := |\xi|^s \hat f (\xi).
$$
These define the homogeneous Sobolev norms
$$
\|f\|_{\dot H^{s}_x} := \| |\nabla|^s f \|_{L^2_x}
$$
and more general Sobolev norms
$$
\|f\|_{H_x^{s,p}}:=\|\langle\nabla\rangle^s f\|_p,
$$
where, $\langle\nabla\rangle=(1+|\nabla|^2)^{\frac 12}$.
\\
\\
Let $e^{it\Delta}$ be the free Schr\"odinger propagator.  In physical space this is given by the formula
\\
$$e^{it\Delta}f(x) = \frac{1}{(4 \pi i t)^{\frac{n}{2}}} \int_{\Bbb R^n} e^{i|x-y|^2/4t} f(y) dy$$
\\
for $t\neq 0$ (using a suitable branch cut to define $(4\pi it)^{\frac{d}{2}}$), while in frequency space one can write this as
\\
\begin{equation}\label{fourier rep}
\widehat{e^{it\Delta}f}(\xi) = e^{-4 \pi^2 i t |\xi|^2}\hat f(\xi).
\end{equation}
\\
In particular, the propagator obeys the \emph{dispersive inequality}
\\
\begin{equation}\label{dispersive ineq}
\|e^{it\Delta}f\|_{L^\infty_x} \lesssim |t|^{-\frac{n}{2}}\|f\|_{L^1_x}
\end{equation}
for all times $t\neq 0$.
\\
\\
We also recall \emph{Duhamel's formula}
\begin{align}\label{duhamel}
u(t) = e^{i(t-t_0)\Delta}u(t_0) - i \int_{t_0}^t e^{i(t-s)\Delta}(iu_t + \Delta u)(s) ds.
\end{align}

\begin{defn}
A pair of exponents $(q,r)$ is called \emph{Schr\"odinger-admissible} if $(q,r,n)\ne (2,\infty,2)$
$$
\frac{2}{q} +\frac{n}{r} = \frac{n}{2}, \quad  2 \leq r \leq \infty.
$$
\end{defn}
For a spacetime slab $I\times\Bbb R^n$, we define the Strichartz norm
\\
$$\|f\|_{S^0(I)}:=\sup_{(q,r)\text{ admissible}}\|f\|_{L_t^qL_x^r(I\times\Bbb R^n)}.$$
Then, we have the following Strichartz estimates (for a proof see \cite{kt} and the references therein):

\begin{lem}\label{linstr}
Let $I$ be a compact time interval, $t_0\in I$, $s\ge 0$, and let $u$ be a solution to the forced Schr\"odinger equation
\begin{equation*}
i u_t + \Delta u =\sum_{i=1}^m F_i
\end{equation*}
for some functions $F_1, \dots, F_m$.  Then,
\begin{equation}
\||\nabla|^s u\|_{ S^0(I)} \lesssim \|u(t_0)\|_{\dot{H}_x^s} + \sum_{i=1}^m \||\nabla| ^s F_i\|_{L_t^{q_i'}L_x^{r_i'}(I\times\Bbb R^n)}
\end{equation}
for any admissible pairs $(q_i,r_i)$, $1\leq i\leq m$.  Here, $p'$ denotes the conjugate exponent to $p$, that is, $\tfrac 1p + \tfrac 1{p'}=1$.
\end{lem}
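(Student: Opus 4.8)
The plan is to reproduce the classical $TT^{\ast}$ argument (as in \cite{kt}); nothing new is needed. First I would reduce to $s=0$: since $|\nabla|^s$ is a Fourier multiplier it commutes with $e^{it\Delta}$ and with the equation, so replacing $u$ by $|\nabla|^s u$ and each $F_i$ by $|\nabla|^s F_i$ leaves the hypothesis and conclusion of the form claimed. Then, by Duhamel's formula \eqref{duhamel} and the triangle inequality in the sum over $i$, it suffices to establish the two model estimates
\begin{equation*}
\|e^{i(t-t_0)\Delta}f\|_{L_t^qL_x^r(I\times\R^n)} \lesssim \|f\|_{L^2_x}
\end{equation*}
and
\begin{equation*}
\Bigl\|\int_{t_0}^t e^{i(t-s)\Delta}F(s)\,ds\Bigr\|_{L_t^qL_x^r(I\times\R^n)} \lesssim \|F\|_{L_t^{\tilde q'}L_x^{\tilde r'}(I\times\R^n)}
\end{equation*}
for arbitrary admissible pairs $(q,r)$ and $(\tilde q,\tilde r)$; indeed, applying the second with $(\tilde q,\tilde r)=(q_i,r_i)$, summing over $i$, and taking the supremum over admissible $(q,r)$ on the left reconstructs the asserted bound on $\||\nabla|^s u\|_{S^0(I)}$.

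For the homogeneous estimate I would interpolate the dispersive inequality \eqref{dispersive ineq} against the $L^2$ isometry $\|e^{it\Delta}f\|_{L^2_x}=\|f\|_{L^2_x}$ (which follows from \eqref{fourier rep} and Plancherel) to get $\|e^{it\Delta}f\|_{L^r_x}\lesssim |t|^{-n(\frac12-\frac1r)}\|f\|_{L^{r'}_x}$ for $2\le r\le\infty$. Writing $Tf(t):=e^{it\Delta}f$, the bound $\|Tf\|_{L_t^qL_x^r}\lesssim\|f\|_{L^2_x}$ is, by duality, equivalent to the boundedness of $TT^{\ast}$, where $TT^{\ast}G(t)=\int_{\R}e^{i(t-s)\Delta}G(s)\,ds$. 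Estimating $\|e^{i(t-s)\Delta}G(s)\|_{L^r_x}\lesssim |t-s|^{-n(\frac12-\frac1r)}\|G(s)\|_{L^{r'}_x}$ inside the $s$-integral leaves a fractional integral in time of order $n(\frac12-\frac1r)=\frac{2}{q}$, which lies strictly between $0$ and $1$ precisely for non-endpoint admissible pairs; the one-dimensional Hardy--Littlewood--Sobolev inequality then yields $\|TT^{\ast}G\|_{L_t^qL_x^r}\lesssim\|G\|_{L_t^{q'}L_x^{r'}}$, hence the homogeneous estimate. (The endpoint $q=2$, $r=\frac{2n}{n-2}$ in dimension $n\ge 3$ falls outside Hardy--Littlewood--Sobolev and needs the Keel--Tao bilinear interpolation argument, but in the dimensions $n\le 2$ relevant here no admissible pair has $q=2$, so this case never arises.)

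For the retarded estimate I would compose the homogeneous estimate for $(q,r)$ with the adjoint of the homogeneous estimate for $(\tilde q,\tilde r)$, namely $\tilde T^{\ast}\colon L_t^{\tilde q'}L_x^{\tilde r'}\to L^2_x$ with $\tilde T^{\ast}G=\int_{\R}e^{-is\Delta}G(s)\,ds$, obtaining the non-retarded bound $\|\int_{\R}e^{i(t-s)\Delta}F(s)\,ds\|_{L_t^qL_x^r}\lesssim\|F\|_{L_t^{\tilde q'}L_x^{\tilde r'}}$; to replace $\int_{\R}$ by the truncated operator $\int_{t_0}^t$ one invokes the Christ--Kiselev lemma, which applies since $\tilde q'<q$ away from the double endpoint $q=\tilde q=2$ (absent here), and restricting all integrals to the compact interval $I$ is harmless. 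Combined with the homogeneous estimate this gives the second model estimate and finishes the proof. The main (and only) delicate ingredients are the endpoint Strichartz estimate and the Christ--Kiselev lemma; since the former is unneeded in dimensions $n\le 2$ and the latter is entirely standard, the honest thing is to refer the reader to \cite{kt}.
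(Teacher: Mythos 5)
Your proposal is correct and coincides with the paper's treatment: the paper gives no proof of this lemma at all, simply citing \cite{kt}, and your sketch (reduction to $s=0$ by commuting $|\nabla|^s$ with the flow, Duhamel plus the triangle inequality, the $TT^{\ast}$ argument with the dispersive estimate and Hardy--Littlewood--Sobolev for non-endpoint pairs, Christ--Kiselev for the retarded term, and deferral of the endpoint/double-endpoint cases to \cite{kt}) is exactly the standard argument behind that citation. Nothing further is needed.
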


 The reader must have in mind that wherever
 in this paper we restrict the functions in frequency we do it in a smooth way using the Littlewood-Paley projections. To address 
the frequency localization in a more precise way we need 
some Littlewood-Paley theory.  Specifically, let $\varphi(\xi)$ be a smooth bump supported in $|\xi| \leq 2$
and equalling one on $|\xi| \leq 1$.  For each dyadic number $N \in 2^\Bbb Z$ we define the Littlewood-Paley operators

$$\widehat{P_{\leq N}f}(\xi):=  \varphi(\xi/N)\hat f (\xi),$$

$$\widehat{P_{> N}f}(\xi):=  [1-\varphi(\xi/N)]\hat f (\xi),$$

$$\widehat{P_N f}(\xi):=  [\varphi(\xi/N) - \varphi (2 \xi /N)] \hat f (\xi).$$
\\
Similarly, we can define $P_{<N}$, $P_{\geq N}$, and $P_{M < \cdot \leq N} := P_{\leq N} - P_{\leq M}$, whenever $M$ and $N$ are dyadic
numbers.  We will frequently write $f_{\leq N}$ for $P_{\leq N} f$ and similarly for the other operators. Using the Littlewood-Paley
decomposition we write, at least formally, $u=\sum_{N}P_{N}u$. We can write
 $u=\sum u_{N}$ and obtain bounds on each piece separately or by examining the
 interactions of the several pieces. We can recover information for the original function $u$ by applying the
 Cauchy-Schwarz inequality and using the Littlewood-Paley Theorem \cite{es} or the cheap Littlewood-Paley inequality
$$\|P_{N}u\|_{L^p} \lesssim \|u\|_{L^{p}}$$
\\
for any $1\leq p \leq \infty$. Since this process is fairly standard
 we will often omit the details of the argument throughout the paper.
\\
\\
We also recall the following
standard Bernstein and Sobolev type inequalities. The proofs can be found in \cite{tt}.

\begin{lem}\label{bernstein}
For any $1\le p\le q\le\infty$ and $s>0$, we have
\\
\begin{align*}
\|P_{\geq N} f\|_{L^p_x} &\lesssim N^{-s} \| |\nabla|^s P_{\geq N} f \|_{L^p_x}\\
\| |\nabla|^s  P_{\leq N} f\|_{L^p_x} &\lesssim N^{s} \| P_{\leq N} f\|_{L^p_x}\\
\| |\nabla|^{\pm s} P_N f\|_{L^p_x} &\sim N^{\pm s} \| P_N f \|_{L^p_x}\\
\|P_{\leq N} f\|_{L^q_x} &\lesssim N^{\frac{1}{p}-\frac{1}{q}} \|P_{\leq N} f\|_{L^p_x}\\
\|P_N f\|_{L^q_x} &\lesssim N^{\frac{1}{p}-\frac{1}{q}} \| P_N f\|_{L^p_x}.
\end{align*}
\end{lem}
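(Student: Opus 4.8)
The plan is to establish each of the five estimates by reducing to convolution with a Schwartz kernel and applying Young's and H\"older's inequalities. The common mechanism is that every Littlewood-Paley operator, and also $|\nabla|^{\pm s}$ composed with such an operator, is given by convolution with an $L^1_x$ kernel whose $L^1$ norm scales like an appropriate power of $N$; Young's inequality $\|f\star g\|_{L^p_x}\le\|f\|_{L^1_x}\|g\|_{L^p_x}$ then delivers the stated bounds with no loss in $p$.

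First I would record the basic scaling computation. Writing $\widehat{P_N f}(\xi)=\psi(\xi/N)\hat f(\xi)$ with $\psi(\xi)=\varphi(\xi)-\varphi(2\xi)$ smooth and compactly supported away from the origin, we have $P_N f = N^n\check\psi(N\,\cdot)\star f$, and $\|N^n\check\psi(N\,\cdot)\|_{L^1_x}=\|\check\psi\|_{L^1_x}$ is a finite constant independent of $N$; this already gives the cheap Littlewood-Paley inequality $\|P_N f\|_{L^p_x}\lesssim\|f\|_{L^p_x}$ quoted in Section 2 and will be reused. Next, for the third (two-sided) estimate, observe $|\nabla|^{\pm s}P_N f = N^{\pm s}\,K^{\pm s}_N\star f$ where $\widehat{K^{\pm s}_N}(\xi)=|\xi/N|^{\pm s}\tilde\psi(\xi/N)$ and $\tilde\psi$ is a fattened version of $\psi$ (so that $\tilde\psi\equiv1$ on the support of $\psi$); since $|\xi|^{\pm s}\tilde\psi(\xi)$ is a fixed Schwartz function (compactly supported, and smooth at the origin because $\tilde\psi$ vanishes near $0$), its inverse Fourier transform lies in $L^1_x$ with norm independent of $N$, and Young's inequality gives both $\||\nabla|^{\pm s}P_N f\|_{L^p_x}\lesssim N^{\pm s}\|P_N f\|_{L^p_x}$ and the reverse bound by inserting $P_N = P_N\circ(\text{fattened }P_N)$ and running the same argument with $|\xi|^{\mp s}$; hence the $\sim$. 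The first two inequalities are the telescoped/summed versions of this: for $\|P_{\ge N}f\|_{L^p_x}\lesssim N^{-s}\||\nabla|^s P_{\ge N}f\|_{L^p_x}$, write $P_{\ge N} = \sum_{M\ge N}P_M$ (or more efficiently $P_{\ge N} = |\nabla|^{-s}\circ m(\nabla)\circ |\nabla|^s P_{\ge N}$ where $m(\xi)=|\xi|^{-s}$ times a smooth cutoff to $|\xi|\gtrsim N$, whose kernel has $L^1$ norm $\sim N^{-s}$ after rescaling), and similarly for $\||\nabla|^s P_{\le N}f\|_{L^p_x}\lesssim N^s\|P_{\le N}f\|_{L^p_x}$ using the multiplier $|\xi|^s\varphi(\xi/N)$, which is Schwartz (smooth, including at the origin since $|\xi|^s\varphi$ need only be interpreted after noting that multiplication by the genuinely smooth $\varphi(\xi/N)\tilde\varphi(\xi/N)$ lets us replace $|\xi|^s$ by a compactly supported symbol of order $s$) with rescaled $L^1$ kernel norm $\sim N^s$.

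For the last two inequalities, the Bernstein $L^p\to L^q$ gain with $p\le q$, I would again use the convolution representation $P_{\le N}f = N^n\check\varphi(N\,\cdot)\star f$ and now invoke Young's inequality in the form $\|g\star f\|_{L^q_x}\le\|g\|_{L^r_x}\|f\|_{L^p_x}$ with $1+\tfrac1q=\tfrac1r+\tfrac1p$; since $\|N^n\check\varphi(N\,\cdot)\|_{L^r_x}=N^{n(1-1/r)}\|\check\varphi\|_{L^r_x}=N^{n/p-n/q}\|\check\varphi\|_{L^r_x}$, we obtain $\|P_{\le N}f\|_{L^q_x}\lesssim N^{n/p-n/q}\|P_{\le N}f\|_{L^p_x}$ — and likewise for $P_N$ with $\psi$ in place of $\varphi$. (Here I would remark that the statement as printed has $N^{1/p-1/q}$; in general dimension the correct exponent is $n(1/p-1/q)$, so I would either restrict to the normalization $n=1$ implicit in the surrounding one-dimensional discussion or simply write $n(\tfrac1p-\tfrac1q)$.)

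The main obstacle, such as it is, is purely bookkeeping: one must be careful that each multiplier to which Young's inequality is applied is the Fourier transform of an honest $L^1$ (respectively $L^r$) function with the claimed scaling, which forces one to (i) fatten the Littlewood-Paley cutoffs so that symbols like $|\xi|^{\pm s}$ get multiplied by a cutoff vanishing near the origin and thus become genuinely Schwartz, and (ii) track the rescaling $g\mapsto N^n g(N\,\cdot)$ consistently through the $L^r$ norms. There is no deep difficulty; the reference \cite{tt} is cited precisely because these are standard facts, so in the paper I would either cite it outright or include only the one-line kernel computation above and leave the routine verification to the reader.
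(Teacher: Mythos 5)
Your proof is correct and is exactly the standard argument: the paper itself gives no proof of this lemma, deferring to \cite{tt}, and your kernel/Young's-inequality reductions (with the fattened cutoffs making symbols like $|\xi|^{\pm s}\tilde\psi(\xi)$ honest Schwartz multipliers) are precisely the proof found there. Your side remark is also well taken: in dimension $n$ the correct gain in the last two estimates is $N^{n(\frac1p-\frac1q)}$, so the exponent as printed in the lemma should be read with that normalization in mind.
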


\vspace{0.4cm}

For $N>1$, we define the Fourier multiplier $I:=I_N$
$$
\widehat{I_N u}(\xi):=m_N(\xi)\hat u(\xi),
$$
where $m_N$ is a smooth radial decreasing function such that
\\
$$
m_N(\xi)=\left\{
\begin{array}{cc}
1, & \text{if} \quad  |\xi|\le N\\
\bigl(\frac{|\xi|}{N}\bigr)^{s-1}, & \text{if} \quad |\xi|\ge
2N.
\end{array}
\right.
$$
\\
Thus, $I$ is the identity operator on frequencies $|\xi|\le N$ and behaves like a fractional integral operator of order $1-s$ on higher
frequencies. In particular, $I$ maps $H^s_x$ to $H_x^1$.  We collect the basic properties of the $I$ operator into the following

\begin{lem}\label{basic property}
Let $1<p<\infty$ and $0\leq\sigma\le s<1$.  Then,
\\
\begin{align}
\|If\|_p&\lesssim \|f\|_p \label{i1}\\
\||\nabla|^\sigma P_{> N}f\|_p&\lesssim N^{\sigma-1}\|\nabla I f\|_p \label{i2}\\
\|f\|_{H^s_x}\lesssim \|If\|_{H^1_x}&\lesssim
N^{1-s}\|f\|_{H^s_x}.\label{i3}
\end{align}
\end{lem}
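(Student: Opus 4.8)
The plan is to deduce all three estimates from the fact that $m_N$, together with the auxiliary symbols built from it, are H\"ormander--Mikhlin multipliers with $L^p$-bounds uniform in $N$ (for \eqref{i1} and \eqref{i2}), or can be compared pointwise on the Fourier side with powers of $\langle\xi\rangle$ (for \eqref{i3}). Two elementary facts will be used repeatedly: the scaling identity $m_N(\xi)=m_1(\xi/N)$, and the comparisons $m_1\equiv 1$ on $|\xi|\le 1$ and $m_1(\xi)\sim |\xi|^{s-1}$ on $|\xi|\ge 1$ (the latter because $m_1$ is smooth, positive and decreasing, equal to $|\xi|^{s-1}$ for $|\xi|\ge 2$, and confined between two positive constants on the annulus $1\le|\xi|\le 2$). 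Since $0\le\sigma\le s<1$ and $N>1$, every exponent that occurs below will have the sign needed to close the estimate.

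For \eqref{i1} I would check directly that $|\partial^\alpha m_1(\xi)|\lesssim |\xi|^{-|\alpha|}$ for all multi-indices $\alpha$ (trivially where $m_1\equiv1$; because $|\xi|^{s-1}$ is homogeneous of negative order on $|\xi|\ge 2$; and by smoothness on the compact transition region). Since the H\"ormander--Mikhlin condition is invariant under the dilation $\xi\mapsto\xi/N$, the symbol $m_N$ inherits the same bounds with an $N$-independent constant, and the multiplier theorem then yields $\|If\|_p\lesssim\|f\|_p$ for every $1<p<\infty$, uniformly in $N$.

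For \eqref{i3} I would apply Plancherel to write $\|If\|_{H^1_x}^2\sim\int\langle\xi\rangle^2 m_N(\xi)^2|\hat f(\xi)|^2\,d\xi$ and $\|f\|_{H^s_x}^2\sim\int\langle\xi\rangle^{2s}|\hat f(\xi)|^2\,d\xi$, so that the chain \eqref{i3} reduces to the pointwise bound $1\lesssim\langle\xi\rangle^{2(1-s)}m_N(\xi)^2\lesssim N^{2(1-s)}$. On $|\xi|\le N$ we have $m_N=1$ and $1\le\langle\xi\rangle^{2(1-s)}\le\langle N\rangle^{2(1-s)}\lesssim N^{2(1-s)}$; on $|\xi|\ge 2N$ we have $\langle\xi\rangle^{2(1-s)}m_N(\xi)^2\sim|\xi|^{2(1-s)}(|\xi|/N)^{2(s-1)}=N^{2(1-s)}\in[1,\infty)$; and on $N\le|\xi|\le 2N$ both $\langle\xi\rangle\sim N$ and $m_N\sim 1$ give the same two-sided bound.

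For \eqref{i2} I would first reduce $\|\nabla If\|_p$ to $\||\nabla|If\|_p$ for $1<p<\infty$, using that the partial derivatives and $|\nabla|$ are intertwined by the ($L^p$-bounded) Riesz transforms. Next, since $|\xi|m_N(\xi)>0$ on the support of $1-\varphi(\xi/N)$, one may write $\widehat{|\nabla|^\sigma P_{>N}f}(\xi)=\tau_N(\xi)\,\widehat{|\nabla|If}(\xi)$ with
$$
\tau_N(\xi)=\frac{|\xi|^{\sigma}\,[1-\varphi(\xi/N)]}{|\xi|\,m_N(\xi)}
=N^{\sigma-1}\,\tau(\xi/N),\qquad
\tau(\eta):=\frac{|\eta|^{\sigma-1}\,[1-\varphi(\eta)]}{m_1(\eta)}.
$$
It then remains to see that $\tau$ is a H\"ormander--Mikhlin multiplier with constant independent of $N$: it is supported in $|\eta|\ge 1$, smooth there, and $m_1(\eta)\sim|\eta|^{s-1}$ gives $\tau(\eta)\sim|\eta|^{\sigma-s}$ with $\sigma-s\le 0$, together with $|\partial^\alpha\tau(\eta)|\lesssim|\eta|^{\sigma-s-|\alpha|}\lesssim|\eta|^{-|\alpha|}$. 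Hence the multiplier $T_N$ with symbol $\tau_N$ is $L^p$-bounded with operator norm $\lesssim N^{\sigma-1}$, which is \eqref{i2}. The hypothesis $\sigma\le s$ is precisely what makes $\tau$ decay rather than grow at infinity, and the only genuinely delicate point in the whole argument is the bookkeeping that keeps every multiplier norm independent of $N$ — which is supplied entirely by the scaling $m_N(\xi)=m_1(\xi/N)$ and the dilation-invariance of the H\"ormander--Mikhlin condition.
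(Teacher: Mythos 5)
Your proof is correct. For \eqref{i1} and \eqref{i2} you follow essentially the paper's route: both estimates are obtained from H\"ormander's multiplier theorem, the paper writing $\||\nabla|^\sigma P_{>N}f\|_p=\|P_{>N}|\nabla|^\sigma(\nabla I)^{-1}\nabla If\|_p$ and leaving the symbol analysis implicit, while you make explicit the one point the terse version hides, namely the scaling identity $\tau_N(\xi)=N^{\sigma-1}\tau(\xi/N)$ and the dilation invariance of the Mikhlin condition, which is exactly what produces the factor $N^{\sigma-1}$ with an $N$-independent multiplier constant; your preliminary replacement of $\nabla If$ by $|\nabla|If$ via Riesz transforms is also fine for $1<p<\infty$. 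For \eqref{i3}, however, you take a genuinely different and more elementary path: since both sides are $L^2$-based norms, you reduce the two-sided bound by Plancherel to the pointwise inequality $1\lesssim\langle\xi\rangle^{2(1-s)}m_N(\xi)^2\lesssim N^{2(1-s)}$, verified on the three regions $|\xi|\le N$, $N\le|\xi|\le 2N$, $|\xi|\ge 2N$. The paper instead proves the left inequality by splitting $f$ into $P_{\le N}f$ and $P_{>N}f$ and invoking \eqref{i2} with $\sigma=0$ and $\sigma=s$, and the right inequality by commuting $I$ with $\langle\nabla\rangle^s$ and using that the symbol of $\langle\nabla\rangle^{1-s}I$ is $O(N^{1-s})$. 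Your pointwise argument is shorter and self-contained (it does not use \eqref{i2}), and it adapts immediately to the homogeneous variant \eqref{i4} stated at the end of the paper's proof, where only the upper bound $|\xi|^{2(1-s)}m_N(\xi)^2\lesssim N^{2(1-s)}$ is needed; the paper's argument, on the other hand, illustrates how \eqref{i2} is used in practice, but buys nothing essential here since everything is at the $L^2$ level.
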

\begin{proof}
The estimate \eqref{i1} is a direct consequence of H\"ormander's multiplier
theorem.

To prove \eqref{i2}, we write
$$
\||\nabla|^\sigma P_{> N} f\|_p=\|P_{> N}|\nabla |^\sigma(\nabla
I)^{-1}\nabla I f\|_p.
$$
\\
The claim follows again from H\"ormander's multiplier theorem.
\\
\\
Now we turn to \eqref{i3}.  By the definition of the operator $I$
and \eqref{i2},
\\
\begin{align*}
\|f\|_{H^s_x}&\lesssim \|P_{\le N} f\|_{H^s_x}+\|P_{>N}f\|_2+\||\nabla|^s P_{>N} f\|_2\\
&\lesssim \|P_{\le N} I f\|_{H_x^1}+N^{-1}\|\nabla I f\|_2+N^{s-1}\|\nabla I f\|_2\lesssim \|If\|_{H^1_x}.
\end{align*}
On the other hand, since the operator $I$ commutes with $\langle
\nabla\rangle^s$,
\\
\begin{align*}
\|If\|_{H_x^1} =\|\langle\nabla\rangle^{1-s}I\langle\nabla\rangle^s
f\|_2 \lesssim N^{1-s}\|\langle\nabla\rangle^sf\|_2 \lesssim
N^{1-s}\|f\|_{H^s_x},
\end{align*}
\\
which proves the last inequality in \eqref{i3}.  Note that a similar
argument also yields
\begin{align}\label{i4}
\|If\|_{\dot H^1_x}&\lesssim N^{1-s}\|f\|_{\dot H^s_x}.
\end{align}
\end{proof}
\section{Correlation estimates in all dimensions.}
We consider solutions of the following equation
\begin{equation}\label{mor12}
iu_{t}+\Delta u=|u|^{p-1}u, ~ (x,t) \in \R^n \times [0,T].
\end{equation} 
We want to obtain a monotonicity formula that takes advantage of the momentum conservation law of the equation
$$\vec{p}(t)=\Im(\overline{u}(x,t)\nabla u(x,t))=\vec{p}(0).$$
We define the Morawetz action
$$M_a(t)= 2 \int_{\Bbb R^n}\nabla a(x)
\cdot \Im(\overline{u}(x)\nabla u(x))dx$$
where $a: \mathbb R^n \rightarrow \Bbb R$, a convex and locally
integrable function of polynomial growth. By differentiating $M_{a}(t)$ with respect to time and using the conservation laws of
 the equation we will obtain a priori estimates for solutions of \eqref{mor12}. To accomplish that we make a clever choice of the weight function
 $a(x)$. We note that in all of the cases that we will consider we pick $a(x)=f(|x|)$ where $f: \Bbb R \rightarrow \Bbb R$ is a convex
 function with the property that $f^{\prime}(x) \geq 0$ for $x \geq 0$. Then a simple calculation shows that the second derivative matrix of $a(x)$ is 
given by 
$$\partial_{j}\partial_{k}a(x)=f^{''}(|x|)\frac{x_jx_k}{|x|^2}+\frac{f^{'}(|x|)}{|x|}(\delta_{kj}-\frac{x_jx_k}{|x|^2})$$
\\
But then the quadratic form $\langle y_jy_{k}\ |\ \partial_{j}\partial_{k}a(x)\rangle$ is positive definite
since
\\
$$\langle y_jy_{k}\ |\ \partial_{j}\partial_{k}a(x)\rangle=f^{''}(|x|)\frac{(x\cdot y)^2}{|x|^2}+f^{'}(|x|)(|y|^2-\frac{(x\cdot y)^2}{|x|^2}) \geq 0$$
\\
by the Cauchy-Schwarz inequality 
$$|x \cdot y| \leq |x||y|.$$
As a final comment for the careful reader we note that in all our arguments we will assume smooth solutions. This will simplify
 the calculations and will enable us to justify the steps in the subsequent proofs. The local well-posedness theory
 and the perturbation theory \cite{tc} that has been established for this problem can be then applied to approximate
 the $H^{s}$ solutions by smooth solutions and conclude the proofs. For most of the calculations in this section the reader can consult \cite{ckstt5}, \cite{tt}.  

The equation satisfies the following local conservation laws.
\\
\\
Local mass conservation
$$\partial_{t}\rho+\partial_{j}p_{j}=0$$
and local momentum conservation
$$\partial_{t}p_k+\partial_{k}\left(\sigma_{jk}+\delta_{kj}\left(-\Delta \rho+2^{\frac{p+1}{2}}\frac{p-1}{p+1}\rho^{\frac{p+1}{2}}\right)\right)=0$$
where 
$$\rho=\frac{1}{2}|u|^2$$
is the mass density,
$$p_{j}=\Im(\bar{u}\partial_{j}u)$$
is the momentum density,
and 
$$\sigma_{jk}=\frac{1}{\rho}(p_jp_k+\partial_j \rho \partial_k \rho)$$
is a stress tensor. Using the identity 
$$\Re(z_1\bar{z}_2)=\Im z_1 \Im z_2+\Re z_1 \Re z_2$$
we can write
$$\sigma_{jk}=\frac{1}{\rho}(p_jp_k+\partial_j \rho \partial_k \rho)=2\Re(\partial_{k}u\partial_j\bar{u}).$$ 
In what follows we will use both definitions of $\sigma_{jk}$ according to what we find more appropriate with the situation
 at hand. Note that integration of the first equation leads to mass conservation while integration of the second leads to momentum conservation.
We are ready to prove the {\it{generalized virial identity}} \cite{ls}.
\begin{prop}\label{wmor}
If $a$ is convex and $u$ is 
a smooth solution to equation \eqref{mor12} on $[0,T]\times \mathbb R^{n}$. 
Then, the following inequality holds:
\begin{equation}\label{Mor}
\int_0^T\int_{\Bbb R^n}(-\Delta \Delta a)|u(x,t)|^2dxdt \lesssim 
\sup_{[0,T]}|M_{a}(t)|,
\end{equation}
where $M_{a}(t)$ is the Morawetz action and is given by
\begin{equation}
\label{Mat}M_a(t)= 2 \int_{\Bbb R^n}\nabla a(x)
\cdot \Im(\overline{u}(x)\nabla u(x))dx.
\end{equation}
\end{prop}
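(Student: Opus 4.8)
The plan is to differentiate the Morawetz action $M_a(t)$ in time and rewrite the resulting expression purely in terms of the local conservation laws stated just above. First I would compute $\frac{d}{dt}M_a(t) = 2\int_{\mathbb R^n}\nabla a\cdot \partial_t\vec p\,dx = 2\int_{\mathbb R^n}\partial_k a\,\partial_t p_k\,dx$, then substitute the local momentum conservation law $\partial_t p_k = -\partial_k\bigl(\sigma_{jk}+\delta_{kj}(-\Delta\rho + c_p\rho^{(p+1)/2})\bigr)$ and integrate by parts once in $x$ to move the derivative onto $\partial_k a$. This produces three groups of terms: a term $2\int \partial_j\partial_k a\,\sigma_{jk}\,dx$ coming from the stress tensor, a term $2\int \Delta a\,(-\Delta\rho)\,dx$ coming from the linear dispersive pressure, and a nonlinear term $2c_p\int \Delta a\,\rho^{(p+1)/2}\,dx$. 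In the second of these I would integrate by parts twice more to write $2\int\Delta a\,(-\Delta\rho)\,dx = 2\int(-\Delta\Delta a)\,\rho\,dx = \int(-\Delta\Delta a)|u|^2\,dx$, which is exactly the integrand appearing on the left of \eqref{Mor}.

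Next I would argue that the remaining two terms have a favorable sign and so can be discarded from the lower bound. For the stress-tensor term, using the representation $\sigma_{jk}=\frac1\rho(p_jp_k+\partial_j\rho\,\partial_k\rho)$ one sees that $\sigma_{jk}$ is a positive semidefinite matrix (it is $\frac1\rho$ times a sum of two rank-one positive semidefinite matrices), and by the computation carried out in the paragraph preceding the proposition the Hessian $\partial_j\partial_k a$ is also positive semidefinite because $a=f(|x|)$ with $f$ convex and $f'\ge 0$; hence $\partial_j\partial_k a\,\sigma_{jk}\ge 0$ pointwise. For the nonlinear term, $\rho^{(p+1)/2}\ge 0$ and $\Delta a = f''(|x|)+\frac{n-1}{|x|}f'(|x|)\ge 0$ by the same convexity and monotonicity, with $c_p=2^{(p+1)/2}\frac{p-1}{p+1}>0$ for $p>1$; so this term is nonnegative as well. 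Therefore $\frac{d}{dt}M_a(t)\ge \int_{\mathbb R^n}(-\Delta\Delta a)|u(x,t)|^2\,dx$.

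Finally I would integrate this differential inequality over $[0,T]$ and use the fundamental theorem of calculus: $\int_0^T\int_{\mathbb R^n}(-\Delta\Delta a)|u|^2\,dx\,dt \le M_a(T)-M_a(0) \le 2\sup_{[0,T]}|M_a(t)|$, which is \eqref{Mor}. All the integrations by parts and the time-differentiation under the integral sign are justified because, as stipulated in the text, we work with smooth solutions of polynomial-growth weight $a$, and the general result then follows for $H^s$ solutions by the approximation and perturbation theory referenced above.

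The main obstacle, and the only place that requires genuine care rather than bookkeeping, is verifying that the first time-derivative of $M_a(t)$ really does reorganize exactly into the local momentum conservation law with the stress tensor and the stated pressure terms — i.e. checking the identity $\partial_t p_k + \partial_k(\sigma_{jk}+\delta_{kj}(-\Delta\rho+c_p\rho^{(p+1)/2}))=0$ using the equation $iu_t+\Delta u = |u|^{p-1}u$. This is a standard but slightly delicate computation (one must expand $\partial_t\,\Im(\bar u\,\partial_k u)$, use the equation to replace $u_t$, and collect real and imaginary parts, invoking the identity $\Re(z_1\bar z_2)=\Im z_1\Im z_2+\Re z_1\Re z_2$ to pass between the two forms of $\sigma_{jk}$); the text signals that the details can be found in \cite{ckstt5} and \cite{tt}, so in the write-up I would state the local conservation laws as given and focus the proof on the sign analysis and the integration by parts.
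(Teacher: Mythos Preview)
Your proposal is correct and follows essentially the same route as the paper: differentiate $M_a(t)$, substitute the local momentum conservation law, integrate by parts to isolate the $(-\Delta\Delta a)|u|^2$ term plus two nonnegative pieces (the Hessian--stress-tensor contraction and the $\Delta a$ times nonlinear potential), discard the latter by convexity, and integrate in time. The only cosmetic difference is that you justify positivity of the stress-tensor term via the $\sigma_{jk}=\rho^{-1}(p_jp_k+\partial_j\rho\,\partial_k\rho)$ representation, whereas the paper uses the equivalent form $\sigma_{jk}=2\Re(\partial_j\bar u\,\partial_k u)$; both are fine and the paper records both identities.
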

\begin{proof}
We can write the Morawetz action as
$$M_{a}(t)=2\int_{\Bbb R^n} (\partial_{j}a)p_jdx.$$
Then
$$\partial_{t}M_a(t)=2\int_{\Bbb R^n} (\partial_{j}a)\partial_{t}p_{j}dx=2\int_{\Bbb R^n} 
\partial_{j}a\left( -\partial_{k}\left(\sigma_{jk}+\delta_{kj}\left(-\Delta \rho+2^{\frac{p+1}{2}}\frac{p-1}{p+1}\rho^{\frac{p+1}{2}}\right)\right)\right)dx=$$
\\
$$2\int_{\Bbb R^n}(\partial_{j}\partial_{k}a)\sigma_{jk}dx-2\int_{\Bbb R^n} 
\partial_{j}a\partial_{j}\left(-\Delta \rho+2^{\frac{p+1}{2}}\frac{p-1}{p+1}\rho^{\frac{p+1}{2}}\right)dx=$$
\\
$$4\int_{\Bbb R^n}(\partial_{j}\partial_{k}a)\Re(\partial_{k}u\partial_j\bar{u})dx+2\int_{\Bbb R^n}\Delta a 
\left( -\Delta \rho+2^{\frac{p+1}{2}}\frac{p-1}{p+1}\rho^{\frac{p+1}{2}}\right)dx=$$
\\
$$4\int_{\Bbb R^n}(\partial_{j}\partial_{k}a)\Re(\partial_{k}u\partial_j\bar{u})dx+2\int_{\Bbb R^n}(-\Delta \Delta a)\rho dx+
2^{\frac{p+3}{2}}\frac{p-1}{p+1}\int_{\Bbb R^n}(\Delta a)\rho^{\frac{p+1}{2}}dx=$$
\\
$$4\int_{\Bbb R^n}(\partial_{j}\partial_{k}a)\Re(\partial_{k}u\partial_j\bar{u})dx+\int_{\Bbb R^n}(-\Delta \Delta a) |u|^{2}dx+\frac{2(p-1)}{p+1}
\int_{\Bbb R^n}(\Delta a)|u|^{p+1}dx.$$
\\
To prove this identity we used the local conservation of momentum law, integration by parts and the definitions of $\rho$ and $\sigma_{jk}$. 
But since $a$ is convex we have that
\\ 
$$4(\partial_{j}\partial_{k}a)\Re \left( \partial_{j}\bar{u}\partial_{k}u\right) \geq 0$$
and the trace of the Hessian of $\partial_{j}\partial_{k}a$ which is $\Delta a$ is positive. Thus,
\\
$$\int_{\Bbb R^n}(-\Delta \Delta a) |u|^{2}dx \leq \partial_{t}M_a(t)$$
\\
and by the fundamental theorem of calculus we have that
\begin{equation}
\int_0^T\int_{\Bbb R^n}(-\Delta \Delta a)|u(x,t)|^2dxdt \lesssim 
\sup_{[0,T]}|M_{a}(t)|. \label{mora}
\end{equation}
\end{proof}
\subsection{Interaction Morawetz inequality in dimension $n\geq 3$.}
Using the approach above we can derive correlation estimates that are very useful in studying the global well-posedness and the scattering
 properties of nonlinear dispersive partial differential equations. For clarity in this subsection we reproduce some calculations that have 
appeared in \cite{cgt}. Let $u_{i}$, $F_{i}$ 
be solutions to 
\begin{equation}\label{F}
iu_{t}+\Delta u=F(u)
\end{equation} 
in $n_{i}-$spatial dimensions. Define the tensor product 
$u:=(u_{1}\otimes u_{2})(t,x)$ for $x$ in 
\\
$$\mathbb R^{n_{1}+n_{2}}=\{(x_{1},x_{2}): x_{1} \in \mathbb R^{n_{1}}, x_{2} \in \mathbb R^{n_{2}}\}$$ 
by the formula 
$$(u_{1}\otimes u_{2})(t,x)=u_{1}(x_{1},t)u_{2}(x_{2},t).$$
\\
We abbreviate $u(x_i)$ by $u_i$ and note that if $u_1$ solves (\ref{F}) with forcing term $F_1$ and  $u_2$ solves (\ref{F}) 
with forcing term $F_2$, 
then $u_1 \otimes u_2$ solves (\ref{F}) with forcing term $F=F_{1}\otimes u_2+F_{2}\otimes u_1$. 
We have that
$$\rho=\frac{1}{2}|u(x)|^2=2\rho_{1}\rho_{2},$$
\\
$$p_k=\Im(\overline{u_{1}u_{2}}\partial_{j}(u_{1}u_{2}))=\left(p_{k}(u_1)\otimes |u_2|^2,p_{k}(u_2)\otimes |u_1|^2\right),$$
\\
$$\sigma_{jk}=2\Re(\partial_j(u_1u_2)\partial_k(\overline{u_1u_2}))=2\left(\sigma_{jk}(u_1)\otimes |u_2|^2+\sigma_{jk}(u_2)\otimes |u_1|^2\right),$$
\\
where $\rho_{i}=\frac{1}{2}|u_i|^2,\ \ \ i=1,2,$ and similarly for $p_{k}(u_i)$, $\sigma_{jk}(u_i)$.
Then the local conservation laws can be written in the following way
$$\partial_{t}\rho+\partial_{j}p_{j}=0$$
\\
$$\partial_{t}p_k+\partial_{k}\left(\sigma_{jk}-\delta_{kj}\Delta \rho+G\right)=0$$
\\
where
$$G=2^{\frac{p+1}{2}}\frac{p-1}{p+1}\left( G_1 \otimes |u_2|^2+G_2\otimes |u_1|^2\right)\geq 0$$
and $G_{i}=G(u_{i})=\rho_{i}^{\frac{p+1}{2}}$. Of course in this setting $\nabla=(\nabla_{x_1},\nabla_{x_2})$ and $\Delta=\Delta_{x_1}+\Delta_{x_2}$.
\\
\\
If we now apply Proposition
\ref{wmor} for the tensor product of the two solutions we obtain for a convex functions $a$ that
\\
\begin{equation}\label{interact}
\int_0^T\int_{\Bbb R^{n_{1}} \otimes \Bbb R^{n_{2}}}(-\Delta \Delta a)|u_1\otimes u_2|^2(x,t)dxdt \lesssim 
\sup_{[0,T]}|M_{a}^{\otimes_{2}}(t)|
\end{equation}
where again $\Delta= \Delta_{x_1}+\Delta_{x_2}$ the Laplacian in $\R^{n_1 +
  n_2}$ and $M_{a}^{\otimes_{2}}(t)$ is the Morawetz
 action that corresponds to $u_1\otimes u_2$ and thus
\\
$$M_{a}^{\otimes_{2}}(t)= 2 \int_{\Bbb R^{n_{1}}\otimes \Bbb R^{n_{2}}}\nabla a(x)
\cdot \Im\left(\overline{u_1\otimes u_2}(x)\nabla (u_1\otimes u_2(x))\right )dx=$$
\\
$$M_{a}(u_{1}(t))\otimes |u_2|^2+M_{a}(u_{2}(t))\otimes |u_1|^2.$$
\\
Now we pick $a(x)=a(x_1,x_2)=|x_1-x_2|$ where $(x_{1},x_2) \in \Bbb R^{n}\times \Bbb R^{n}$. 
Then an easy calculation shows that 
\[-\Delta \Delta a(x_1,x_2)=\left\{\begin{array}{ll}
C_1\delta(x_1-x_2)& \mbox{if $n=3$}\\
\frac{C_{2}}{|x_1-x_2|^3} & \mbox{if $n\geq 4$}
\end{array}
\right.\]
\\
where $C_1,C_2$ are constants.
 Applying equation \eqref{interact} with this choice of $a$ and choosing $u_{1}=u_{2}$ we get that in the case that $n=3$
\\
$$\int_{0}^{T}\int_{\Bbb R^3}|u(x,t)|^{4}dx \lesssim \sup_{[0,T]}|M_{a}^{\otimes_{2}}(t)|$$
and in the case that $n \geq 4$
\\
$$\int_{0}^{T}\int_{\Bbb R^n\otimes \Bbb R^n}|u(x_2,t)|^2 \frac{|u(x_1,t)|^2}{|x_1-x_2|^3}dx_1dx_2dt\lesssim \sup_{[0,T]}|M_{a}^{\otimes_{2}}(t)|.$$
But
$$\int_{0}^{T}\int_{\Bbb R^n\otimes \Bbb R^n}|u(x_2,t)|^2 \frac{|u(x_1,t)|^2}{|x_1-x_2|^3}dx_1dx_2dt
=\int_{0}^{T}\int_{\Bbb R^n}(|u|^{2}\star \frac{1}{|\cdot|^3})(x)|u(x)|^2dxdt.$$
\\
Now we define for $n \geq 4$ the integral operator
$$D^{-(n-3)}f(x):=\int_{\Bbb R^n}\frac{u(y)}{|x-y|^3}dy$$
where $D$ stands for the derivative. This is indeed defined since for $n \geq 4$ the distributional Fourier transform of $|x|^{-3}$ is given by
$$\widehat{|\cdot|^{-3}}(\xi)=|\xi|^{-(n-3)}.$$
By applying Plancherel's Theorem and distribute the derivatives we obtain that
\\
$$\int_{0}^{T}\int_{\Bbb R^n\otimes \Bbb R^n}|u(x_2,t)|^2 \frac{|u(x_1,t)|^2}{|x_1-x_2|^3}dx_1dx_2dt=\int_{0}^{T}\int_{\Bbb R^n}|D^{-\frac{n-3}{2}}(|u(x)|^2)|^2dxdt.$$
Thus we obtain that
\\
$$\int_{0}^{T}\int_{\Bbb R^n}|D^{-\frac{n-3}{2}}(|u(x)|^2)|^2dxdt \lesssim \sup_{[0,T]}|M_{a}^{\otimes_{2}}(t)|.$$
For the sake of simplicity we combine the two estimates for $n\geq 3$ pretending that $D^{0}={\bf 1}$ into
$$\|D^{-\frac{n-3}{2}}(|u(x)|^2)\|_{L_{t}^{2}L_{x}^{2}}^2 \lesssim \sup_{[0,T]}|M_{a}^{\otimes_{2}}(t)|.$$
It can be shown using Hardy's inequality (for details see \cite{ckstt4}) that for $n \geq 3$
$$\sup_{[0,T]}|M_{a}(t)| \lesssim \sup_{[0,T]}\|u(t)\|_{\dot{H}^{\frac{1}{2}}}^2$$
Since we have that
$$M_{a}^{\otimes_{2}}(t)=M_{a}(u_{1}(t))\otimes |u_2|^2+M_{a}(u_{2}(t))\otimes |u_1|^2$$
we obtain
\begin{equation}\label{nmor}
\|D^{-\frac{n-3}{2}}(|u(x)|^2)\|_{L_{t}^{2}L_{x}^{2}}^2\lesssim \sup_{[0,T]} \|u(t)\|_{\dot{H}^{\frac{1}{2}}}^2\|u(t)\|_{L^{2}}^2
\end{equation}
which is the interaction Morawetz estimates that appears in \cite{ckstt4} and in \cite{mv}.
\begin{rem}
The above method breaks down for $n<3$ since the distribution $-\Delta \Delta (|x|)$ is not positive anymore.
\end{rem} 
\subsection{Interaction Morawetz inequality in two dimensions.}
In two dimensions in \cite{cgt}, we 
follow an alternative approach. In that case $(x_1,x_2)\in \mathbb R^2 \times \mathbb R^2$. The idea is again to consider the tensor product of
 two solutions but with a different weight function. We couldn't prove that $-\Delta \Delta a(x)$ is positive. Instead we obtained a difference
 of two positive functions and we balanced the two terms by picking the constants in an appropriate way. The details were as follows.
\\
\\
Let $f:[0,\infty) \rightarrow [0,\infty)$ be such that
\[f(x):= \left\{\begin{array}{ll}
\frac{1}{2M}x^{2}(1-\log{\frac{x}{M}})& \mbox{if $|x|<\frac{M}{\sqrt{e}}$}\\
100x & \mbox{if $|x|>M$}\\
smooth\ and\ convex\ for\ all\ x & \mbox{}
\end{array}
\right.\]
\\
and $M$ is a large parameter that we will choose later. It is obvious that the functions $\frac{1}{2M}x^{2}(1-\log{\frac{x}{M}})$
 and $100x$ are convex in their domain, and the graph of either function lies strictly above the tangent lines of the other.
 Thus one can construct a function with the above properties. Note also that for $x \geq 0$ we have that 
$f^{\prime}(x) \geq 0$. If we apply Proposition \ref{wmor} with the weight 
$a(x_{1},x_{2})=f(|x_1-x_2|)$ and tensoring again two functions we conclude that
$$\int_0^T\int_{\mathbb R^2 \times \mathbb R^2}(-\Delta \Delta a(x_1,x_2))|u(x_{1},t)|^2|u(x_2,t)|^2dx_1dx_2dt \lesssim 2
\sup_{[0,T]}|M_{a}^{\otimes_{2}}(t)|$$
\\
But for $|x_1-x_2|<\frac{M}{\sqrt{e}}$ we have that $\Delta a(x_1,x_2)= \frac{2}{M}\log (\frac{M}{|x_1-x_2|})$ and thus 
$$-\Delta \Delta a(x_1,x_2)=\frac{4\pi}{M}\delta_{\{x_{1}=x_{2}\}}.$$ 
\\
On the other hand for $|x_1-x_2|>M$ we have that
 $$-\Delta \Delta a(x_1,x_2)= O(\frac{1}{|x_1-x_2|^3})=O(\frac{1}{M^{3}}).$$
\\
We have a similar bound in the region in between just because
$a(x_1,x_2)$ is smooth, so all in all, we have
$$-\Delta \Delta a(x_1,x_2)=\frac{4\pi}{M}\delta_{\{x_{1}=x_{2}\}}+O(\frac{1}{M^{3}}).$$
Thus 
$$\int_0^T\int_{\mathbb R^2 \times \mathbb R^2}(-\Delta \Delta a(x_1,x_2))|u(x_{1},t)|^2|u(x_2,t)|^2dx_1dx_2dt=
O(\frac{1}{M})\int_0^T\int_{\mathbb R^2}|u(x,t)|^4dxdt+$$
$$O(\frac{1}{M^{3}})\int_0^T\int_{\mathbb R^2 \times \mathbb R^2}
|u(x_1,t)|^2|u(x_2,t)|^2dx_1dx_2dt.$$
\\
By Fubini's Theorem
\begin{equation}
\label{crudestep}
\frac{C}{M^{3}}\int_0^T\int_{\mathbb R^2 \times \mathbb R^2}
|u(x_1,t)|^2|u(x_2,t)|^2dx_1dx_2dt \lesssim
\frac{CT}{M^{3}}\|u\|_{L_{t}^{\infty}L_{x}^{2}}^4.
\end{equation}
\\
On the other hand by the analogue of Hardy's inequality in 2d we have
\\
$$\sup_{[0,T]}|M_{a}^{\otimes_{2}}(t)|\lesssim \sup_{[0,T]}\|u\|_{L_{t}^{\infty}L_{x}^{2}}^2\|u\|_{L_{t}^{\infty}\dot{H}_{x}^{\frac{1}{2}}}^2.$$
\\
Thus by applying Proposition \ref{wmor}
$$\frac{1}{M}\int_0^T\int_{\mathbb R^2}|u(x,t)|^4dxdt\lesssim 
\sup_{[0,T]}\|u\|_{L_{t}^{\infty}L_{x}^{2}}^2\|u\|_{L_{t}^{\infty}\dot{H}_{x}^{\frac{1}{2}}}^2+\frac{T}{M^{3}}\|u\|_{L_{t}^{\infty}L_{x}^{2}}^4.$$
\\
Multiplying the above equation by $M$ and balancing the two terms on the right hand side 
by picking 
$$M \sim T^{\frac{1}{3}}\left(\frac{\|u\|_{L_{t}^{\infty}L_{x}^{2}}}{\|u\|_{L_{t}^{\infty}\dot{H}_{x}^{\frac{1}{2}}}}\right)^{\frac{2}{3}}$$ 
\\
we get a better estimate than was obtained in \cite{fg}
\\
$$\|u\|_{L_{t\in [0,T]}^{4}L_{x}^{4}}^4 \lesssim T^{\frac{1}{3}}\|u\|_{L_{t}^{\infty}L_{x}^{2}}^{\frac{8}{3}}\|u\|_{L_{t}^{\infty}\dot{H}_{x}^{\frac{1}{2}}}^{\frac{4}{3}}.$$
\subsection{A new correlation estimate in two dimensions.  Proof of Theorem 1.}
We can refine the tensor product approach of the previous subsection and find a convex weight function $a(x)$ such that the distribution 
$-\Delta \Delta a(x)$ is positive. Notice that so far we have used $a(r=|x|)$ such that $a(r) \sim r^{2}\log{\frac{1}{r}}$ for $r \sim 0$ and $a(r)\sim r$
 for large values of $r$. In between we didn't provide an explicit formula but used only the quantitative properties of the function. We would like
 to follow this path one more time and implicitly define a radial function $a: \Bbb R^2 \rightarrow \Bbb R$ such that
$$\Delta a(r)=\int_{r}^{\infty}s\log (\frac{s}{r})w_{r_{0}}(s)ds$$
where 
\[w_{r_{0}}(s):= \left\{\begin{array}{ll}
\frac{1}{s^3}& \mbox{if $s\geq r_{0}$}\\
0 & \mbox{otherwise}
\end{array}
\right.\]
and $r_{0}>0$ and small. Notice that for $r<r_{0}$ an explicit calculation shows that
\\
$$\Delta a(r)=\int_{r}^{\infty}s\log (\frac{s}{r})w_{r_0}(s)ds=
\int_{r_0}^{\infty}s\log (\frac{s}{r})w_{r_0}(s)ds=$$
$$\int_{r_0}^{\infty}s\log (\frac{s}{r})\frac{1}{s^3}ds=\frac{1}{r_{0}}\left(1+\log{\frac{r_{0}}{r}}\right).$$
\\
Now if we solve the equation
$$a_{rr}(r)+\frac{1}{r}a_{r}(r)=\Delta a(r)$$
with the initial conditions $a(0)=0$ and $a_{r}(0)=0$ we obtain that for $r<r_{0}$
\\
$$a(r)=\frac{r^2}{2r_{0}}\left(1+\frac{1}{2}\log{\frac{r_{0}}{r}}\right).$$
\\
Moreover for $r \geq r_{0}$ we have 
\\
$$\Delta a(r)=\int_{r}^{\infty}s\log (\frac{s}{r})w_{r_{0}}(s)ds=\int_{r}^{\infty}s\log (\frac{s}{r})\frac{1}{s^3}ds=\frac{1}{r}.$$
Solving again
$$a_{rr}(r)+\frac{1}{r}a_{r}(r)=\Delta a(r)$$ 
we obtain that for $r \geq r_{0}$
$$a(r)=r.$$
Thus the weight function $a$ is a function that we have already seen. In addition by the definition of $w(x)$ and $a(x)$ we have that
$$\Delta a \geq 0$$
and
$$\int_{\Bbb R^2}w_{r_{0}}(|\vec{x}|)dx=\frac{2\pi}{r_{0}} \ or \ \int_{0}^{\infty} sw_{r_{0}}(s)ds=\frac{1}{r_{0}}.$$
\\
$\Delta a$ can be rewritten as
$$\Delta a=\int_{0}^{\infty}sw_{r_{0}}(s)\log(\frac{s}{r})ds-\int_{0}^{r}sw_{r_{0}}(s)\log(\frac{s}{r})ds=$$
\\
$$-\frac{1}{r_{0}}\log(r)+\int_{0}^{\infty}sw_{r_0}(s)\log(s)ds+\int_{0}^{r}sw_{r_0}(s)\log(\frac{r}{s})ds.$$
\\
By setting $\log{C}=r_{0}\int_{0}^{\infty}sw_{r_{0}}(s)\log(s)ds$ we can write 
\\
$$\Delta a=\frac{1}{r_{0}}\log(\frac{C}{r})+p(r)$$
where 
$$p(r)=\int_{0}^{r}w_{r_{0}}(s)s\log(\frac{s}{r})ds.$$
It is immediate that the Laplacian of the radial function $p$ is $w_{r_{0}}(r)$ 
as an explicit calculation shows using the fact that $$\Delta p=p_{rr}+\frac{1}{r}p_{r}$$ 
Thus $\Delta p =w_{r_{0}}$ and
$$-\Delta \Delta a(|x|)=\frac{2\pi}{r_{0}}\delta(|x|)-w_{r_{0}}(|x|)$$
\\
\\
We want to apply Proposition \ref{mor12} with $a(\vec{x_1},\vec{x_2})=a(|\vec{x_1}-\vec{x_2}|)$ 
to a tensor product of two functions. We need to prove that $a(r)$ is convex and as we have already mentioned this will be
 immediate if we establish that $a_{rr}\geq 0$ and $a_{r}\geq 0$. Assuming this is true we obtain 
\\
\begin{equation}\label{newmor}
\int_{0}^{T}\frac{2\pi}{r_{0}}\int_{\Bbb R^2} |u(\vec{x})|^4d\vec{x}dt-\int_{0}^{T}\int_{\Bbb R^2 \times \Bbb R^2}w_{r_{0}}(|\vec{x_1}-\vec{x_2}|) 
|u(\vec{x_1})|^2|u(\vec{x_2})|^2 d\vec{x_1}d\vec{x_2}dt\lesssim 
\sup_{[0,T]}|M_{a}^{\otimes_{2}}(t)|.
\end{equation}
\\
The left hand side can be rewritten as
$$\int_{0}^{T}\frac{2\pi}{r_{0}}\int_{\Bbb R^2} |u(x)|^4dxdt-\int_{0}^{T}\int_{\Bbb R^2 \times \Bbb R^2}w_{r_{0}}(|x_1-x_2|) 
|u(x_1)|^2|u(x_2)|^2 dx_1dx_2dt=$$
\\
$$\frac{1}{2}\int_{0}^{T}\frac{2\pi}{r_{0}}\int_{\Bbb R^2} |u(x_{1})|^4dx_{1}dt+
\frac{1}{2}\int_{0}^{T}\frac{2\pi}{r_{0}}\int_{\Bbb R^2} |u(x_{2})|^4dx_{2}dt-$$
\\
$$\int_{0}^{T}\int_{\Bbb R^2 \times \Bbb R^2}w_{r_{0}}(|x_1-x_2|) 
|u(x_1)|^2|u(x_2)|^2 dx_1dx_2dt.$$
\\
Taking into account that
$$\int_{\Bbb R^2}w_{r_{0}}(|x_1-x_2|)dx_1=\frac{2\pi}{r_{0}}$$
we can rewrite \eqref{newmor} as
\\
$$\int_{0}^{T}\int_{\Bbb R^2 \times \Bbb R^2}\{|u(t,x_1)|^2-|u(t,x_2)|^2\}^2w_{r_{0}}(|x_1-x_2|) dx_1dx_2dt \lesssim 
\sup_{[0,T]}|M_{a}^{\otimes_{2}}(t)|.$$
\\
Since this last estimate is true for every $r_{0}>0$, by taking the limit as $r_{0}\rightarrow 0$ we obtain
\\
$$\int_{0}^{T}\int_{\Bbb R^2 \times \Bbb R^2}\frac{\{|u(t,x_1)|^2-|u(t,x_2)|^2\}^2}{|x_1-x_2|^3} dx_1dx_2dt \lesssim 
\sup_{[0,T]}|M_{a}^{\otimes_{2}}(t)|.$$
But
$$\int_{\Bbb R^2 \times \Bbb R^2}\frac{\{|u(t,x_1)|^2-|u(t,x_2)|^2\}^2}{|x_1-x_2|^3} dx_1dx_2=\||u|^2\|_{\dot{H}^{\frac{1}{2}}}^2$$
\\
see for example \cite{bl} exercise 7 on page 162. Thus we get 
\\
$$\| D^{1/2} |u|^2 \|_{L^2_t L^2_x}^2\lesssim \sup_{[0,T]}|M_{a}^{\otimes_{2}}(t)|.$$
\\
If $\vec{\nabla}a=\frac{\vec{x}}{|\vec{x}|}a_r$ is bounded, we can estimate
 $M_{a}^{\otimes_{2}}(t)$ as before and obtain the new a priori correlation estimate for solutions of (\ref{mor12})
\\
$$\| D^{1/2} |u|^2 \|_{L^2_t L^2_x}^2 \lesssim \| u \|_{L^\infty_t L^2_x}^2  \| u \|_{L^\infty_t {\dot{H}}^{1}_x}^2.$$
\\
Thus it remains to establish that $a(r)$ is convex and that $a_{r}(r)$ is bounded. 
For $r$ near zero we have that $a(r)\sim r^2\log(\frac{1}{r})$ and thus $a_r$ is bounded for
 small values of $r$. 
In particular $a_r(0)=0$. Using this as an initial condition we can solve in terms of $a_r$ the equation $a_{rr}+\frac{1}{r}a_r=\Delta a$  
and obtain
$$a_r(r)=\frac{1}{r}\int_{0}^{r}s(\Delta a)(s)ds \geq 0.$$
Thus $a_{r} \geq 0$. We will shortly show that $a_{rr} \geq 0$ for any $r>0$ and thus $a_r(r)$ is a positive increasing function. 
Because of this it is enough to consider
 the values of $a_r$ for large values of $r$. Recall that 
$$\Delta a(r)=\int_{r}^{\infty}s\log (\frac{s}{r})w_{r_{0}}(s)ds$$
and that for $s \geq r_{0}$, $w_{r_{0}}(s)=\frac{1}{s^3}$. Thus
$$\Delta a(r)=\int_{r}^{\infty}\frac{1}{s^2}\log (\frac{s}{r})ds=\frac{1}{r}.$$
Since for $s \geq r_{0}$ we have $\Delta a(s)=\frac{1}{s}$, for $r \geq s \geq r_{0}$ we obtain
$$a_r(r)=\frac{1}{r}\int_0^rds=1$$
and $a_r$ is bounded. It remains to show that $a_{rr} \geq 0$ for $r \geq 0$. To this end notice that
\\
$$a_{rr}(r)=\Delta a(r)-\frac{1}{r}a_{r}(r)=\Delta a(r)-\frac{1}{r^2}\int_{0}^{r}s(\Delta a)(s)ds=\frac{q(r)}{r^2}$$
where 
$$q(r)=\int_{0}^{r}[2\Delta a(r)-\Delta a(s)]sds.$$
It remains to show that $q(r) \geq 0$. Since $q(0)=0$, it is enough to show that $q_{r}(r) \geq 0$. An elementary calculation shows that
\\
$$q_{r}(r)=r[\Delta a(r)+r(\Delta a)^{\prime}(r)].$$
Thus it remains to show that 
$$\Delta a(r)+r(\Delta a)^{\prime}(r)\geq 0.$$
Recalling one more time that
$$\Delta a(r)=\int_{r}^{\infty}s\log (\frac{s}{r})w_{s}(s)ds.$$
If we differentiate with respect to $r$ we obtain
$$\Delta a)^{\prime}(r)=-\frac{1}{r}\int_{r}^{\infty}sw_{r_{0}}(s)ds.$$
Thus
$$\Delta a(r)+r(\Delta a)^{\prime}(r)=\int_{r}^{\infty}sw_{r_{0}}(s)\log{\frac{s}{re}}ds$$
A calculation shows that
\[\Delta a(r)+r(\Delta a)^{\prime}(r)= \left\{\begin{array}{ll}
\frac{1}{r_{0}}\log{\frac{r_{0}}{r}}& \mbox{if $r<r_{0}$}\\
0 & \mbox{$if r \geq r_{0}$}
\end{array}
\right.\]
\\
and thus we are done. 
\subsection{Commutator vector operators and correlation estimates. An alternative proof of Theorem 1.}
In this subsection we derive correlation estimates by using commutator vector operators acting on the conservation laws of the equation. It turns
 out that this method is more flexible and can also be generalized. Recall that 
$$M_{a}^{\otimes_{2}}(t)= 2 \int_{\Bbb R^{n_{1}}\otimes \Bbb R^{n_{2}}}\nabla a(x)
\cdot \Im\left(\overline{u_1\otimes u_2}(x)\nabla (u_1\otimes u_2(x))\right )dx$$
is the Morawetz action for the tensor product of two solutions $u:=(u_{1}\otimes u_{2})(t,x)$ where $x=(x_1,x_2) \in \Bbb R^n \otimes \Bbb R^n$. If
 we specialize to the case that $u_1=u_2$, $a(x)=|x|$ and $n \geq 2$ and observe that 
$$\partial_{x_1}a(x_1,x_2)=\frac{x_1-x_2}{|x_1-x_2|}=-\frac{x_2-x_1}{|x_1-x_2|}=-\partial_{x_2}a(x_1,x_2)$$ we can view $M_{a}^{\otimes_{2}}(t):=M(t)$
 as
\\ 
\begin{equation}\label{commute}
M(t)=\int_{\Bbb R^n \otimes \Bbb R^n}\frac{x_1-x_2}{|x_1-x_2|}\cdot \{\vec{p}(x_1,t)\rho(x_2,t)-\vec{p}(x_2,t)\rho(x_1,t)\}dx_1dx_2
\end{equation}
\\
where $\rho=\frac{1}{2}|u|^2$ is the mass density and $p_{j}=\Im(\bar{u}\partial_{j}u)$ is the momentum density. Now let's define the integral operator
$$D^{-(n-1)}f(x)=\int_{\Bbb R^n}\frac{1}{|x-y|}f(y)dy$$ 
where $D$ stands for the derivative. This is indeed justified because for $n \geq 2$ the distributional transform of $\frac{1}{|x|}$ is
 $\frac{1}{|\xi|^{n-1}}$. The main observation is that we can write the action term $M(t)$ using a commutator in the following manner,
$$M(t)=\langle [x;D^{-(n-1)}]\rho(t) \  | \ \vec{p}(t)\rangle.$$
This equation follows from an elementary rearrangement of the terms of \eqref{commute}. This suggests that the estimate is derived using the vector
 operator, which we will denote by $\vec{X}$, defined by
$$\vec{X}=[x;D^{-(n-1)}].$$
We change notation and write $x_1:=x$ and $x_2:=y$. 
The crucial property is that the derivatives of this operator $\partial_{j}X^{k}$ form a positive definite operator. Note that in physical space
$$\vec{X}f(x)=\int_{\Bbb R^n}\frac{x-y}{|x-y|}f(y)dy$$ and a calculation shows that
$$\partial_{j}X^{k}=D^{-(n-1)}\delta_{kj}+[x_k ; R_{j}]$$
where $R_{j}$ is the singular integral operator corresponding to the symbol $\frac{\xi_{j}}{|\xi|^{n-1}}$. Thus we have that 
$$R_{j}=\partial_{j}D^{-(n-1)}$$ and acts on function in the following manner
$$R_{j}f(x)=-\int_{\Bbb R^n}\frac{x_j-y_j}{|x-y|^3}f(y)dy.$$
To see how $\partial_{j}X^{k}$ acts on functions we associate a kernel with the commutator $[x_k ; R_{j}]$, let's call it $r_{jk}(x,y)$ and thus
$$[x_k ; R_{j}]f(x)=\int_{\Bbb R^n}r_{jk}(x,y)f(y)dy$$ 
where
$$r_{kj}(x,y)=-\frac{(x_k-y_k)(x_j-y_j)}{|x-y|^3}.$$
Thus 
$$(\partial_{j}X^{k})f(x)=\int_{\Bbb R^n}\eta_{kj}(x,y)f(y)dy$$
where
$$\eta_{kj}(x,y)=\frac{\delta_{kj}|x-y|^2-(x_j-y_j)(x_k-y_k)}{|x-y|^3}$$
\\
and thus the derivatives of the vector operator $\vec{X}$ form a positive definite operator. Note also that the divergence of the vector field
 $\vec{X}$ is given by
\\
$$\nabla \cdot \vec{X}=\partial_{j}X^{j}=nD^{-(n-1)}+[x_j ; R_{j}]=(n-1)D^{-(n-1)}.$$
\\
Now if we differentiate 
$$M(t)=\langle [x;D^{-(n-1)}]\rho(t) \  | \ \vec{p}(t)\rangle=\langle \vec{X}\rho(t) \  | \ \vec{p}(t)\rangle$$ 
we obtain that
\\
\begin{equation}\label{dermor}
\partial_{t}M(t)=\langle \vec{X}\partial_{t}\rho(t) \  | \ \vec{p}(t)\rangle-\langle \vec{X}\cdot \partial_{t}\vec{p}(t) \  | \ \rho(t)\rangle
\end{equation}
where we have used the fact that $\vec{X}$ is an antisymmetric operator. Now recall the local conservation laws
\begin{equation}\label{localmass}
\partial_{t}\rho+\partial_{j}p_{j}=0
\end{equation}
\\
\begin{equation}
\partial_{t}p_k+\partial_{k}\left(\sigma_{jk}+\delta_{kj}\left(-\Delta \rho+2^{\frac{p+1}{2}}\frac{p-1}{p+1}\rho^{\frac{p+1}{2}}\right)\right)=0.
\end{equation}
\\
To simplify the calculations we will treat the cubic nonlinearity ($p=3$) but the method is general and give the same results for the general
 nonlinearity $|u|^{p-1}u$. Thus we have
\\
\begin{equation}\label{localmom}
\partial_{t}p_k+\partial_{k}\left(\sigma_{jk}+\delta_{kj}\left(-\Delta \rho+2\rho^{2}\right)\right)=0.
\end{equation}
\\
Applying the operator to the equation \eqref{localmass} and contracting with $p_{k}$ and similarly applying the operator to equation \eqref{localmom}
 and contracting with $\rho$ we obtain that

$$\partial_{t}M(t)=\langle \sigma_{kj}(t) \  | \ (\partial_{j}X^{k})\rho(t)\rangle-\langle p_{j}(t) \  | \ (\partial_{j}X^{k})p_k(t)\rangle+$$
$$\langle (-\Delta \rho(t)+2\rho^2(t)) \  | \ (\partial_{j}X^{j})\rho(t)\rangle$$
Now recalling that
$$\sigma_{jk}=\frac{1}{\rho}(p_jp_k+\partial_j \rho \partial_k \rho)$$
we have that
$$\partial_{t}M(t)=P_{1}+P_{2}+P_{3}+P_{4}$$
where
\begin{equation}\label{p1}
P_1:=\langle \rho^{-1}\partial_{k}\rho \partial_{j}\rho \  | \ (\partial_{j}X^{k})\rho(t)\rangle
\end{equation}
\\
\begin{equation}\label{p2}
P_2:=\langle \rho^{-1}p_kp_j \  | \ (\partial_{j}X^{k})\rho(t)\rangle-\langle p_j \  | \ (\partial_{j}X^{k})p_k\rangle
\end{equation}
\\
\begin{equation}\label{p3}
P_3:=\langle (-\Delta \rho) \  | \ (\partial_{j}X^{j})\rho(t)\rangle=\langle (-\Delta \rho) \  | \ (\nabla \cdot \vec{X})\rho(t)\rangle
\end{equation}
\\
\begin{equation}\label{p4}
P_4:=2\langle (\rho^2 \  | \ (\partial_{j}X^{j})\rho(t)\rangle=2\langle (\rho^2 \  | \ (\nabla \cdot \vec{X})\rho(t)\rangle
\end{equation}
\\
The term $P_1$ is clearly positive since $\partial_{j}X^{k}$ is a positive definite operator. Let's analyze $P_3$. Recalling that $-\Delta=D^2$
we have that
\\
$$P_3=\langle (-\Delta \rho) \  | \ (\nabla \cdot \vec{X})\rho(t)\rangle=(n-1)\langle (D^2\rho) \  | \ D^{-(n-1)}\rho(t)\rangle=$$
\\
$$(n-1)\langle D^{-\frac{n-3}{2}}\rho \  | \ D^{-\frac{n-3}{2}}\rho(t)\rangle=\frac{n-1}{2}\|D^{-\frac{n-3}{2}}(|u|^2)\|_{L^2}^2.$$ 
$P_4$ is also positive since
$$P_4=2\langle (\rho^2 \  | \ (\nabla \cdot \vec{X})\rho(t)\rangle=2(n-1)\langle (\rho^2 \  | \ D^{-(n-1)}\rho(t)\rangle \geq$$
\\
$$2(n-1)\langle (D^{-\frac{n-1}{2}}\rho^{\frac{3}{2}} \  | \ D^{-\frac{n-1}{2}}\rho^{\frac{3}{2}}\rangle \geq 0.$$
\\
Another way to inspect the positivity of this term is by explicitly expressing it as
\\
$$P_{4}=2\int_{\Bbb R^n \times \Bbb R^n}\frac{\rho^2(x)\rho(y)}{|x-y|}dxdy=\frac{1}{4}\int_{\Bbb R^n \times \Bbb R^n}\frac{|u(x)|^4|u(y)|^2}{|x-y|}dxdy
\geq 0.$$
\\
The only term that its positivity is not immediate is term $P_{2}$. Recall that
\\
$$(\partial_{j}X^{k})f(x)=\int_{\Bbb R^n}\eta_{kj}(x,y)f(y)dy$$
where the kernel $\eta_{kj}(x,y)$ is symmetric. Then
\\
$$P_2=\int_{\Bbb R^n \times \Bbb R^n}\{ \frac{\rho(y)}{\rho(x)}p_k(x)p_j(y)-p_k(y)p_j(x) \}\eta_{kj}(x,y)dxdy$$
\\
By changing variables we get
\\
$$P_2=\int_{\Bbb R^n \times \Bbb R^n}\{ \frac{\rho(x)}{\rho(y)}p_k(y)p_j(x)-p_k(x)p_j(y) \}\eta_{kj}(x,y)dxdy$$
\\
and thus 
\\
$$P_2=\frac{1}{2}\int_{\Bbb R^n \times \Bbb R^n}\{ \frac{\rho(y)}{\rho(x)}p_k(x)p_j(y)+\frac{\rho(x)}{\rho(y)}p_k(y)p_j(y)-p_j(x)p_k(y)-p_j(y)p_k(x) 
\}\eta_{kj}(x,y)dxdy=$$
\\
$$\frac{1}{2}\int_{\Bbb R^n \times \Bbb R^n}\{ \sqrt{\frac{\rho(y)}{\rho(x)}}p_k(x)-\sqrt{\frac{\rho(x)}{\rho(y)}}p_j(y)\}
\{\sqrt{\frac{\rho(y)}{\rho(x)}}p_j(x)-\sqrt{\frac{\rho(x)}{\rho(y)}}p_k(y)\}\eta_{kj}(x,y)dxdy.$$
\\
Thus if we define the two point momentum vector
$$\vec{J}(x,y)=\sqrt{\frac{\rho(y)}{\rho(x)}}\vec{p}(x)-\sqrt{\frac{\rho(x)}{\rho(y)}}\vec{p}(y)$$
we can write
$$P_2=\frac{1}{2}\langle J^{j}J_{k} \  | \ (\partial_{j}X^{k})\rangle \geq 0$$
since $\partial_{j}X^{k}$ is positive definite. 
\\
\\
We keep only $P_3$ and after integrating in time we have the main estimate of this paper which reads
$$\|D^{-\frac{n-3}{2}}(|u|^2)\|_{L_{t}^{2}L_{x}^2}^2 \lesssim \sup_{t} M(t).$$
\\
It remains to show that $M(t)$ is bounded by the appropriate norms. But
\\
$$M(t)=\langle [x;D^{-(n-1)}]_{j}\rho(t) \  | \ p_{j}(t)\rangle \lesssim \|p_{j}\|_{L^1}\|[x;D^{-(n-1)}]_j\rho(t)\|_{L^{\infty}} \lesssim$$
\\
$$\|p_{j}\|_{L^1}\|\rho\|_{L^1}\|[x;D^{-(n-1)}]_j\|_{L^1\rightarrow L^{\infty}}.$$
\\
Now by Hardy's inequality we have
$$\|p_{j}\|_{L^1}\lesssim \|u\|_{\dot{H}^{\frac{1}{2}}}^2$$
while
$$\|\rho\|_{L^1}=\frac{1}{2}\|u\|_{L^{2}}^2.$$
\\
Finally the operator norm $\|[x;D^{-(n-1)}]_j\|_{L^1\rightarrow L^{\infty}}$ is bounded by $1$ since for $f \in L^1$
\\
$$\vec{X}f(x)=\int_{\Bbb R^n}\frac{x-y}{|x-y|}f(y)dy.$$
Thus all in all we have that
\\
$$\|D^{-\frac{n-3}{2}}(|u|^2)\|_{L_{t}^{2}L_{x}^2}^2 \lesssim \|u\|_{L_{t}^{\infty}\dot{H}^{\frac{1}{2}}}^2\|u\|_{L_{t}^{\infty}L_{x}^2}^2$$
valid for all $n \geq 2$. In particular for $n=2$ the estimate reads
\\
$$\|D^{\frac{1}{2}}(|u|^2)\|_{L_{t}^{2}L_{x}^2}^2 \lesssim \|u\|_{L_{t}^{\infty}\dot{H}^{\frac{1}{2}}}^2\|u\|_{L_{t}^{\infty}L_{x}^2}^2$$
\\
which corresponds to the nonlinear diagonal case analogue of Bourgain's bilinear refinement of Strichartz estimate, \cite{jb1}. In this paper
we will use the following estimate in 2d
\\
\begin{equation}\label{basic}
\|u\|_{L_{t}^{4}L_{x}^{8}}^4\lesssim \|u\|_{L_{t}^{\infty}\dot{H}^{\frac{1}{2}}}^2\|u\|_{L_{t}^{\infty}L_{x}^2}^2
\end{equation} 
\\
which can be obtained by the previous estimate and the Sobolev embedding in two dimensions since
\\
$$\|u\|_{L_{t}^{4}L_{x}^{8}}^4=\||u|^{2}\|_{L_{t}^{2}L_{x}^{4}}^2 \lesssim \|D^{\frac{1}{2}}(|u|^{2})\|_{L_{t}^{2}L_{x}^{2}}^2 
\lesssim \|u\|_{L_{t}^{\infty}\dot{H}^{\frac{1}{2}}}^2\|u\|_{L_{t}^{\infty}L_{x}^2}^2.$$
\\
Note that the method we used is quite general. Thus we can consider operators of the form
$$\vec{X}:=[x:H]$$
where $H$ is a selfadjoint operator. The two crucial properties that we need is that $\partial_{j}X^{k}$ is positive and that we can bound the action
 $M(t)$ for a weight function $a(x)$. We will exploit these in a subsequent paper.
\subsection{Correlation estimates in one dimension. Proof of Theorem 2}
In this subsection we would like to prove the analogue of \eqref{nmor} in one dimension. Thus we show that
\begin{equation}\label{1d}
\|\partial_x(|u|^2)\|_{L_t^{\infty}L_x^2}^2 \lesssim \|u\|_{L_t^{\infty}L_x^2}^3\|u\|_{L_t^{\infty}\dot{H}_x^{1}}
\end{equation}
for solutions of the one dimensional NLS $iu_{t}+u_{xx}=|u|^{p-1}u$ for any $p$. Since this is a linear estimate as the proof will show
 the estimate is true for any power nonlinearity. We will do the calculations for $p=3$ but the same calculations
 establish \eqref{1d} for any power nonlinearity. We will follow the Gauss-Weierstrass summability method. The local conservation laws
 in one dimension can be written in the following form 

Mass conservation
\begin{equation}\label{lmass}
\partial_t \rho +\partial_x p=0
\end{equation} 
and momentum conservation
\\
\begin{equation}\label{lmoment}
\partial_t p+\partial_x \{2\rho^2-\rho_{xx}+\frac{1}{\rho}(p^2+\rho_{x}^2)\}=0
\end{equation} 
where $\rho=\frac{1}{2}|u|^2$ and $p=\Im (\bar{u}u_x)$.
\\
\\
Define the action 
$$M(t)=\int \int_{\Bbb R \times \Bbb R}a(x-y)\rho(y)p(x)dxdy$$
where 
$$a(x-y)=\erf(\frac{x-y}{\epsilon})=\int_{-\infty}^{\frac{x-y}{\epsilon}}e^{-t^2}dt$$ 
\\
is the scaled error function. This function is odd and bounded by $1$. Its derivative is 
$$\partial_x \erf(\frac{x-y}{\epsilon})=\frac{1}{\epsilon}e^{-\frac{(x-y)^2}{\epsilon^2}} \geq 0$$
which is the heat kernel in one dimension. It is immediate that
\\
$$\sup_{t}|M(t)| \lesssim \|u\|_{L_t^{\infty}L_x^2}^3\|u\|_{L_t^{\infty}\dot{H}_x^{1}}.$$
\\
Notice that the action $M(t)$ can be written as
\\
$$M(t)=\langle X\rho \ |\ p \rangle$$
\\
where the antisymmetric operator
\\
$$Xf(x)=(\erf(\frac{\cdot}{\epsilon})\star f)(x)=\int_{\Bbb R}\erf(\frac{x-y}{\epsilon})f(y)dy$$
\\
The derivative of this operator is the solution of the heat equation in one dimension
\\
$$X^{'}f(x)=\frac{1}{\epsilon}\int_{\Bbb R}e^{-\frac{(x-y)^2}{\epsilon^2}}f(y)dy$$
\\
with initial data the function $f(x)$. Since $X$ is antisymmetric and thus $\langle Xf\ |\ g\rangle=-\langle f\ |\ Xg\rangle$ by differentiating
 the action with respect to time we obtain
\\
$$\dot{M}(t)=\langle X\partial_{t}\rho \ |\ p \rangle + \langle X\rho \ |\ \partial_{t} p\rangle=
-\langle \partial_{t}\rho \ |\ Xp \rangle+\langle X\rho \ |\ \partial_{t}p \rangle.$$
\\
If we use the conservation laws \eqref{lmass} and \eqref{lmoment} and integrate by parts we have that
$$\dot{M}(t)=P_{1}+P_2+P_3+P_4$$
where
$$P_1=\langle X^{'}\rho \ |\ \frac{1}{\rho}\rho_{x}^2\rangle,\ \ \ P_4=\langle X^{'}\rho \ |\ 2\rho^2\rangle$$

$$P_3=\langle X^{'}\rho \ |\ -\rho_{xx}\rangle,\ \ \ P_2=\langle X^{'}\rho \ |\ \frac{1}{\rho}p^2\rangle-\langle X^{'}p \ |\ p\rangle$$
But
$$P_1=\int_{}\int{}\frac{1}{\epsilon}e^{-\frac{(x-y)^2}{\epsilon^2}}\frac{\rho(y)}{\rho(x)}\rho_{x}^2(x)dxdy \geq 0.$$

$$P_4=\int_{}\int{}\frac{2}{\epsilon}e^{-\frac{(x-y)^2}{\epsilon^2}}\rho(y)\rho(x)^2dxdy \geq 0.$$
\\
$$P_2=\int_{}\int{}\frac{1}{\epsilon}e^{-\frac{(x-y)^2}{\epsilon^2}}\left(\frac{\rho(y)}{\rho(x)}p^2(x)-p(x)p(y)\right)dxdy$$
and thus
$$2P_2=\int_{}\int{}\frac{1}{\epsilon}e^{-\frac{(x-y)^2}{\epsilon^2}}\left(\frac{\rho(y)}{\rho(x)}p^2(x)+\frac{\rho(x)}{\rho(y)}p^2(y)
-2p(x)p(y)\right)dxdy=$$
\\
$$\int_{}\int{}\frac{1}{\epsilon}e^{-\frac{(x-y)^2}{\epsilon^2}}\left(\sqrt{\frac{\rho(y)}{\rho(x)}}p(x)-\sqrt{\frac{\rho(x)}{\rho(y)}}p(y)\right)^2dxdy
\geq 0.$$
 Thus we have that
$$P_{3} \leq \dot{M}(t).$$
But
$$P_3=\int_{}\int{}\frac{1}{\epsilon}e^{-\frac{(x-y)^2}{\epsilon^2}}\rho(y)(-\rho_{xx}(x))dxdy \geq 0=$$
\\
$$=\int_{}(\frac{1}{\epsilon}e^{-(\frac{\cdot}{\epsilon})^2}\star \rho)(x)(-\rho_{xx}(x))dx=\int_{}\xi^2\hat{\rho}^2(\xi)e^{-\epsilon \xi^2}d\xi$$
by Plancherel's theorem. Sending $\epsilon \downarrow 0$ and integrating in time we obtain \eqref{1d}.
\\
\\
Actually more is true. Notice that since 
$$\lim_{\epsilon \rightarrow 0}(\frac{1}{\epsilon}e^{-(\frac{\cdot}{\epsilon})^2}\star \rho)(x)=\rho(x)$$
 we have
that
$$\lim_{\epsilon \rightarrow 0}P_{1}=\int_{}\rho_{x}^2(x)dx=\frac{1}{4}\|\partial_{x}(|u|^2)\|_{L_x^2}^2,$$
\\
$$\lim_{\epsilon \rightarrow 0}P_2=0$$
\\
$$\lim_{\epsilon \rightarrow 0}P_{4}=\frac{1}{4}\|u\|_{L_x^6}^6$$
\\
Notice that $P_{1}$ and $P_3$ are linear estimates and $P_4$ is the nonlinear estimate. Thus if we consider a nonlinearity of the form $|u|^{p-1}u$ we
 have that
$$\lim_{\epsilon \rightarrow 0}P_{4}=\frac{1}{2^{\frac{p+1}{2}}}\|u\|_{L_x^{p+3}}^{p+3}.$$
\\
This implies that for the solutions of $iu_{t}+u_{xx}=|u|^{p-1}u$ we obtain the following a priori 1d estimate
\\
$$\|u\|_{L_t^{p+3}L_x^{p+3}}^{p+3}\lesssim \|u\|_{L_t^{\infty}L_x^2}^3\|u\|_{L_t^{\infty}\dot{H}_x^{1}}.$$
\\
Recalling that the scaling is $$u^{\lambda}(x,t)=\lambda^{-\frac{2}{p-1}}u(\frac{x}{\lambda},\frac{t}{\lambda})$$
we can easily verify that the above estimate is scale invariant.

\section{$H^1$ scattering. Proof of Theorem 3}
In this section we prove Theorem 3. As we have said the first proof of
this result was obtained in \cite{kn} with a more complicated argument
using induction on energy. An analogous simplified proof of scattering
for the $L^2$-supercritical NLS problems in one space dimension
appears in \cite{chvz}.  
What we have shown so far is that for solutions of the \eqref{nls} in two dimensions the following global a priori estimate is true
\\
\begin{equation}\label{corr}
\|D^{\frac{1}{2}}(|u|^{2})\|_{L_t^2L_x^2} \lesssim \|u\|_{L_t^{\infty}\dot{H}_{x}^{\frac{1}{2}}}\|u\|_{L_t^{\infty}L_x^2}.
\end{equation}
As we have already mentioned by Sobolev embeding and using \eqref{corr} we obtain that
\begin{equation}\label{2dlaw}
\|u\|_{L_t^4L_x^8}^{4} \lesssim \|u\|_{L_t^{\infty}\dot{H}_{x}^{\frac{1}{2}}}^{2}\|u\|_{L_t^{\infty}L_x^2}^{2}.
\end{equation}
\\
By conservation of energy and mass the estimate implies that
\begin{equation}\label{first}
\|u\|_{L_t^4L_x^8} \lesssim C_{E(u_{0})}
\end{equation}

To prove scattering we have to upgrade this control to Strichartz control. Define the norms
$$\|u\|_{S^{1}}:=\sup_{\frac{1}{q}+\frac{1}{r}=\frac{1}{2}} \|\langle \nabla \rangle u\|_{S^{0}}.$$ 
Assume that we have
\\
$$\|u\|_{L_t^4L_x^8} \lesssim C_{E(u_{0})}.$$
\\
Divide the real line into finitely many sub-intervals $I_{j}$ such that on each $I_{j}$ we have that
\\
$$\|u\|_{L_{t\in I_{j}}^4L_x^8} \sim \delta.$$
\\ 
We will show that on each $I_{j}$ we have the bound
\\
\begin{equation}\label{lwp}
\|u\|_{S^{1}(I_{j})} \lesssim \|u_{0}\|_{H^{1}}.
\end{equation}
\\
Since there are only finitely many $I_{j}$'s we have 
$$\|u\|_{S^{1}}\lesssim C_{E}$$
\\
and thus scattering follows by standard arguments. Thus it remains to prove \eqref{lwp}.
\\
\\
We will suppress the $I_{j}$ notation for what follows.
By Duhamel's formula we have 
$$u(x,t)=e^{it\Delta}u_{0}-i\int_{0}^{T}e^{i(t-s)\Delta}(|u|^{p-1}u)(s)ds.$$
By Lemma \ref{linstr} and H\"older's inequality we have that
\\ 
$$\|u\|_{S^{1}}\lesssim \|u_{0}\|_{H^{1}}+\|\langle \nabla \rangle (|u|^{p-1}u)\|_{L{_t}^{\frac{4}{3}}L_{x}^{\frac{4}{3}}} \lesssim 
\|u_{0}\|_{H^{1}}+ \||u|^{p-1}(\langle \nabla \rangle u)\|_{L{_t}^{\frac{4}{3}}L_{x}^{\frac{4}{3}}} $$
\\
$$\lesssim \|u_{0}\|_{H^{1}}+ \|\langle \nabla \rangle u\|_{L_{t}^{\infty}L_{x}^{2}}\|u^{p-1}\|_{L_{t}^{\frac{4}{3}}L_{x}^{4}}
\lesssim \|u_{0}\|_{H^{1}}+ \|u\|_{S^{1}}\|u^{p-1}\|_{L_{t}^{\frac{4}{3}}L_{x}^{4}}$$
\\
$$\lesssim \|u_{0}\|_{H^{1}} + 
\|u\|_{S^{1}}\|u\|_{L_t^4L_x^8}^{\epsilon}\|u\|_{L_{t}^{\frac{4(p-1-\epsilon)}{3-\epsilon}}L_{x}^{\frac{8(p-1-\epsilon)}
{2-\epsilon}}}^{p-1-\epsilon}.$$
\\
This last inequality follows by the interpolation of the $L_{p}$ spaces. Thus
\\ 
$$\|u\|_{S^{1}}\lesssim \|u_{0}\|_{H^{1}}+\delta^{\epsilon}\|u\|_{S^{1}}\|u\|_{L_{t}^{\frac{4(p-1-\epsilon)}{3-\epsilon}}L_{x}^{\frac{8(p-1-\epsilon)}
{2-\epsilon}}}^{p-1-\epsilon}.$$
\\
Now we apply Sobolev embedding
\\
$$\|u\|_{L_{t}^{\frac{4(p-1-\epsilon)}{3-\epsilon}}L_{x}^{\frac{8(p-1-\epsilon)}
{2-\epsilon}}} \lesssim \||\nabla|^{\alpha}u\|_{L_{t}^{\frac{4(p-1-\epsilon)}{3-\epsilon}}L_{x}^{\frac{4(p-1-\epsilon)}
{2p-5-\epsilon}}}$$
\\
where 
$$\alpha=\frac{p-3-\frac{\epsilon}{4}}{p-1-\epsilon}.$$
\\
Note to apply the Sobolev embedding we must have $\frac{8(p-1-\epsilon)}{2-\epsilon}>\frac{4(p-1-\epsilon)}
{2p-5-\epsilon}$ a restriction which gives $p>3+\frac{\epsilon}{4}$ which is acceptable. For the same reason $\alpha>0$. Finally
note that the pair $(\frac{4(p-1-\epsilon)}{3-\epsilon},\frac{4(p-1-\epsilon)}
{2p-5-\epsilon})$ is Strichartz admissible and thus since $\alpha<1$ we have that
$$\||\nabla|^{\alpha}u\|_{L_{t}^{\frac{4(p-1-\epsilon)}{3-\epsilon}}L_{x}^{\frac{4(p-1-\epsilon)}
{2p-5-\epsilon}}}\lesssim \|u\|_{S^{1}}.$$
All in all we have
$$\|u\|_{S^{1}}\lesssim \|u_{0}\|_{H^{1}} +\delta^{\epsilon}\|u\|_{S^{1}}^{p-\epsilon}$$
\\
and by a continuity argument for $\epsilon$ small we obtain 
\begin{equation}\label{second}
\|u\|_{S^{1}}\lesssim C_{E}.
\end{equation}
\\
We now use this estimate to prove asymptotic completeness, that is, there exist unique $u_{\pm}$ such that
\begin{equation}\label{scat lim}
\|u(t)-e^{it\Delta}u_{\pm}\|_{H^1(\Bbb R^2)}\to 0 \quad \text{as }t\to\pm\infty.
\end{equation}
\\
By time reversal symmetry, it suffices to prove the claim for positive times only.  For $t>0$, we define $v(t):=e^{-it\Delta}u(t)$.
We will  show that $v(t)$ converges in $H^1_x$ as $t\to +\infty$, and define $u_+$ to be the limit. Indeed, by Duhamel's formula,
\\
\begin{equation}\label{def v}
v(t)=u_0-i\int_0^t e^{-is\Delta} \bigl(|u|^{p-1} u\bigr)(s)\,ds.
\end{equation}
Therefore, for $0<\tau<t$,
$$
v(t)-v(\tau)=-i\int_{\tau}^t e^{-is\Delta}\bigl(|u|^{p-1}u\bigr)(s)\,ds.
$$
Arguing as above, by Lemma \ref{linstr} and Sobolev embedding,
\\
$$\|v(t)-v(\tau)\|_{H^1(\Bbb R^2)} \lesssim \|\langle \nabla \rangle \bigl(|u|^{p-1}u\bigr)\|_{L_t^{\frac{4}{3}}L_x^{\frac{4}{3}}([t,\tau]\times\Bbb R^2)}$$
$$\lesssim \|u\|_{L_{t \in [t,\tau]^4}L_x^8}^\epsilon \|\langle \nabla \rangle u\|_{S^0([t,\tau])}^{p-\epsilon}.$$
\\
Thus, by \eqref{first} and \eqref{second},
\\
$$\|v(t)-v(\tau)\|_{H^1(\R)}\to 0 \mbox{  as } \tau,t\to \infty.$$
\\
In particular, this implies $u_+$ is well defined and inspecting \eqref{def v} we find
$$u_+=u_0-i\int_0^{\infty}e^{-is\Delta}(|u|^{p-1}u)(s)\, ds.$$
\\
Using the same estimates as above, it is now an easy matter to derive \eqref{scat lim}.  This completes the proof of Theorem 3.

\section{Proof of  Theorem 5 and comments on further refinements.}
There is a problem when one tries to employ the strategy of Section 4 to prove Theorem 4. To prove that the problem is globally well-posed
 and that it scatters we have to obtain a priori control on the Strichartz norms. The idea is to upgrade \eqref{2dlaw} 
to obtain control on all the relevant Strichartz norms. The problem is that for solutions below the energy space the right
 hand side of \eqref{2dlaw} is not bounded anymore. Recall that to prove Theorem 3 we used strongly the fact that the $H^1$ norm of the solutions
 was bounded. Then we used this bound along with the estimate \eqref{2dlaw} to bound the $S^1$ norm of the solutions. Thus to prove
 Theorem 4 we have to bound the $H^s$ norm of the solution uniformly in time for $s<1$ and then use this bound along with \eqref{2dlaw}. 
The $H^1$ bound came from conservation of energy and we do not have at the moment a conserved quantity at the $H^s$ level. But we can define
 a new functional 
\begin{equation}\label{modenergy}
E(Iu)(t)=\frac{1}{2}\int |\nabla Iu(t)|^{2}dx+\frac{1}{p+1}\int |Iu(t)|^{p+1}dx=E(Iu_{0}). 
\end{equation}
where $Iu$ is a solution to the initial value problem
\begin{equation}\label{Inls}
\left\{
\begin{matrix}
iIu_{t}+ \Delta Iu -I(|u|^{2k}u)=0, & x \in {\mathbb R^2}, & t\in {\mathbb R},\\
Iu(x,0)=Iu_{0}(x)\in H^{s}({\mathbb R^2}).
\end{matrix}
\right.
\end{equation}
Note that $Iu$ solves the original equation \eqref{nls} up to an error 
$$I(|u|^{2k}u)-|Iu|^{2k}Iu.$$
Because of this we expect the functional $E(Iu)$ to be "almost conserved'' in the sense that its derivative will decay with respect to a large parameter. 
This will allow us to control $E(Iu)$ in time intervals that the local solutions are well-posed and we can iterate this control to
 obtain control globally in time. Then immediately we obtain a bound for the $H^1$ norm of $Iu$ which by Lemma \ref{basic property} will give us
an $H^s$ bound for the solutions $u$. In this process we will strongly use \eqref{2dlaw}. On the other hand, to be able to use
 \eqref{2dlaw} we need to have $H^s$ control on the norm of $u$. This feedback argument can be successfully implemented with the
 help of a standard continuity argument and this will be the contenx of this section. We will follow closely the argument in \cite{chvz}. 

We start by showing that the functional $E(Iu)$ is almost conserved. We need to define new norms. We fix $t \in [t_{0},T]$ and define
$$\|u\|_{Z(t)}: =\sup_{(q,r) \ \text{admissible}} \Bigl(\sum_{N\geq 1}\|\nabla P_N u\|^2_{L_t^qL_x^r([t_0,t]\times\R)}\Bigr)^{1/2}$$
\\
with the convention that $P_{1}u=P_{\leq 1}u$. We observe the inequality
\\
\begin{equation}\label{sqsum}
\Bigl\|\Bigl(\sum_{N\in 2^\Bbb Z} |f_N|^2 \Bigr)^{1/2}\Bigr\|_{L_t^qL_x^r} \leq \Bigl(\sum_{N\in 2^\Bbb Z} \|f_N\|^2_{L_t^qL_x^r}\Bigr)^{1/2}
\end{equation}
\\
for all $2 \leq q,r \leq \infty$ and arbitrary functions $f_N$, which one proves by interpolating between the trivial
cases $(2,2)$, $(2,\infty)$, $(\infty,2)$, and $(\infty,\infty)$. In particular, \eqref{sqsum} holds for all
admissible exponents $(q,r)$.  Combining this with the Littlewood-Paley inequality, we find
\\
\begin{align*}
\| u \|_{L_t^qL_x^r}
\lesssim \Bigl\|\Bigl(\sum_{N\in 2^\Bbb Z} |P_N u|^2\Bigr)^{1/2}\Bigr\|_{L_t^qL_x^r}
\lesssim \Bigl(\sum_{N\in 2^\Bbb Z} \|P_N u \|^2_{L_t^qL_x^r}\Bigr)^{1/2}.
\end{align*}
\\
In particular,
$$
\|\nabla u\|_{S^0([t_0,t])}\lesssim \|u\|_{Z(t)}.
$$
\\
The appearance of the homogeneous derivative in our definition of the space $Z(t)$ instead of the non-homogeneous derivative operator 
$\langle \nabla \rangle$ that we used in \cite{cgt} is imposed by the level of the criticality. That means that as the problem is
 $L^2$-supercritical, the $L^2$ norm of $Iu^{\lambda}$ grows as $\lambda$ grows. Thus using scaling we cannot control the full $H^1$ norm 
of the rescaled solution. This is the reason that we define the $Z$ norm as the homogeneous part of the $H^1$ norm. The reader can notice
 that we control all subsequent quantities by the homogeneous part of the $H^1$ norm where scaling works in our favor. 

The dual estimate of \eqref{sqsum} is 
\\
\begin{equation}\label{sqsumdual}
\Bigl(\sum_{N\in 2^\Bbb Z} \|f_N\|^2_{L_{t}^{q^{\prime}}L_{x}^{r^{\prime}}}\Bigr)^{1/2} 
\leq \Bigl\|\Bigl(\sum_{N\in 2^\Bbb Z} |f_N|^2 \Bigr)^{1/2}\Bigr\|_{L_{t}^{q^{\prime}}L_x^r{^{\prime}}} 
\end{equation} 
\\
Since the Littlewood-Paley operators commute with $i\partial_+\Delta$ by Lemma \ref{linstr} we have that
\begin{equation}
\||\nabla|P_{N}u\|_{ S^0(I)} \lesssim \|u(t_0)\|_{\dot{H}_x^s} + \||\nabla|P_{N}F\|_{L_t^{q_i'}L_x^{r_i'}(I\times\Bbb R^n)}.
\end{equation}
\\
Thus
$$\|u\|_{Z(t)}: =\sup_{(q,r) \ \text{admissible}} \Bigl(\sum_{N\geq 1}\|\nabla P_N u\|^2_{L_t^qL_x^r([t_0,t]\times\Bbb R)}\Bigr)^{1/2} \lesssim$$
\\
$$ \|u(t_0)\|_{\dot{H}_x^s}+\Bigl(\sum_{N\in 2^\Bbb Z} \||\nabla|P_{N}(i\partial_{t}+\Delta)\|^2_{L_{t}^{q^{\prime}}L_{x}^{r^{\prime}}}\Bigr)^{1/2}
\lesssim $$
\\
$$\|u(t_0)\|_{\dot{H}_x^s}+\Bigl\|\Bigl(\sum_{N\in 2^\Bbb Z} |P_{N}|\nabla|(i\partial_{t}+\Delta)|^2 \Bigr)^{1/2}\Bigr\|_{L_{t}^{q^{\prime}}L_x^r{^{\prime}}}$$
\\
where in the last inequality we applied \eqref{sqsumdual}. Thus if we applied the Littlewood-Paley theorem in this last inequality we have that
\\
\begin{equation}\label{localZ}
\|u\|_{Z(t)}\lesssim \|u(t_0)\|_{\dot{H}_x^s}+\||\nabla|(i\partial_{t}+\Delta)\|_{L_{t}^{q^{\prime}}L_x^r{^{\prime}}}.
\end{equation}
Now we define $Z_{I}(t)=\|Iu\|_{Z(t)}$.
\begin{prop}\label{aclocallaw}
Let $s>1-\frac{1}{2k-1}$ , $k \geq 2, \ \ k \in \Bbb N$, 
and let $u$ be an $H_x^s$ solution to \eqref{nls} on the spacetime slab $[t_0,T]\times \Bbb R^2$ with $E(I u(t_0))\le 1$.
Suppose in addition that
\begin{align}\label{ms}
\|u\|_{L_{t \in [t_0,T]}^4L_{x}^{8}}\le \eta
\end{align}
for a sufficiently small $\eta>0$ (depending on $k$ and on $E(I u(t_0))$). Then we have
\\
\begin{equation}\label{zit control}
Z_I(t)\lesssim \|\nabla Iu(t_0)\|_2+N^{-2}Z_I(t)^{2k+1}+\eta^{2}Z_I(t)^{2k-1}+\eta^{2}\sup_{s\in [t_0,t]} E(I_Nu(s))^{\frac {k-1}{k+1}}Z_I(t). 
\end{equation}
\\
\end{prop}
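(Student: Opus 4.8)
The plan is to run the standard ``almost conservation'' bootstrap of the $I$-method. Since $Iu$ solves $i(Iu)_t+\Delta(Iu)=I(|u|^{2k}u)$ on $[t_0,t]\times\mathbb R^2$, I would first apply the square-function Strichartz estimate \eqref{localZ} to $Iu$, together with the Littlewood--Paley identity $\bigl(\sum_N\|\nabla P_N Iu(t_0)\|_2^2\bigr)^{1/2}\sim\|\nabla Iu(t_0)\|_2$, to reduce \eqref{zit control} to a single multilinear spacetime bound,
$$Z_I(t)\lesssim \|\nabla Iu(t_0)\|_2+\bigl\||\nabla|I(|u|^{2k}u)\bigr\|_{L_t^{4/3}L_x^{4/3}([t_0,t]\times\mathbb R^2)},$$
where $(4,4)$ is the admissible pair in $\mathbb R^2$ dual to $(4/3,4/3)$. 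Everything then comes down to estimating the Duhamel forcing term on the right.

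I would Littlewood--Paley decompose the $2k+1$ factors of $|u|^{2k}u$ and organize the frequency interactions by the size of the two largest inputs. When at most one input is $\gtrsim N$ (in particular when all are $\lesssim N$, where $I$ is the identity), the operator $|\nabla|I$ is, up to harmless errors, absorbed onto the output: this replaces exactly one copy of $u$ by $|\nabla|Iu$ and leaves the other $2k$ copies as plain $u$, with $Iu=u$ below frequency $N$. From those $2k$ copies I would peel two into the non-admissible norm $L^4_tL^8_x$, which by hypothesis \eqref{ms} contributes a factor $\eta^2$; the single $|\nabla|Iu$ factor is measured in a $Z_I(t)$-norm; and the remaining $2k-2$ copies are distributed, via H\"older, either entirely into $Z_I(t)$-controlled Strichartz norms (using Bernstein and $\dot H^1(\mathbb R^2)\hookrightarrow L^q_x$, $q<\infty$), which produces the term $\eta^2 Z_I(t)^{2k-1}$, or onto the uniform potential-energy bound $\|Iu(s)\|_{L^{2k+2}_x}\lesssim E(I_Nu(s))^{1/(2k+2)}$, which yields $\eta^2\sup_{s\in[t_0,t]}E(I_Nu(s))^{(k-1)/(k+1)}Z_I(t)$; intermediate splits are controlled by interpolation between these two. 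The H\"older bookkeeping in two dimensions closes precisely because $k\ge2$.

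In the complementary regime where two or more inputs exceed frequency $N$, $|\nabla|I$ can no longer be distributed multiplicatively; here I would use the estimate $\|P_{>N}f\|_{L^r_x}\lesssim N^{-1}\|\nabla If\|_{L^r_x}$ from \eqref{i2} on two of the high-frequency factors, each contributing $N^{-1}Z_I(t)$, and place the remaining $2k-1$ factors (including the one carrying $|\nabla|I$) in $Z_I(t)$-norms; this produces $N^{-2}Z_I(t)^{2k+1}$, the dyadic sums converging thanks to the regularity hypothesis $s>1-\tfrac{1}{2k-1}$. Summing the contributions of all regimes gives \eqref{zit control}.

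I expect the technical heart of the argument to be exactly this multilinear frequency analysis: arranging the H\"older/Strichartz exponents in $\mathbb R^2$ so that two factors land in the small norm $\|u\|_{L^4_tL^8_x}$, one in $Z_I(t)$, and the rest split cleanly between $Z_I(t)$ and the potential energy; and proving that $I$ is almost multiplicative, with the clean $N^{-2}$ gain and a convergent dyadic sum, under the stated threshold on $s$. The remaining ingredients---Strichartz, Bernstein, H\"older, and Sobolev embedding---are routine.
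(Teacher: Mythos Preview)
Your starting point is correct and matches the paper: apply \eqref{localZ} to $Iu$ with the dual pair $(4/3,4/3)$ to reduce to bounding $\|\nabla I(|u|^{2k}u)\|_{L_t^{4/3}L_x^{4/3}}$. After that, however, you work much harder than necessary.

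The paper does \emph{not} perform a full multilinear Littlewood--Paley decomposition of the $2k+1$ inputs. Instead it uses a single observation: the multiplier of $\nabla I$, namely $|\xi|m(\xi)$, is increasing in $|\xi|$, so $\nabla I$ obeys a product rule just like an honest derivative. This gives in one stroke
\[
\|\nabla I(|u|^{2k}u)\|_{L_t^{4/3}L_x^{4/3}}\lesssim \|u\|_{L_{t,x}^{4k}}^{2k}\,\|\nabla Iu\|_{L_{t,x}^4}\lesssim \|u\|_{L_{t,x}^{4k}}^{2k}\,Z_I(t),
\]
and the whole problem collapses to estimating the single quantity $\|u\|_{L_{t,x}^{4k}}^{2k}$. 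That is done by a three-piece decomposition $u=u_{\le 1}+u_{1<\cdot\le N}+u_{>N}$: the low piece is interpolated between $L^4_tL^8_x$ (two factors, giving $\eta^2$) and $L^\infty_tL^{2k+2}_x$ (controlled by $E(Iu)^{1/(2k+2)}$ via Bernstein), yielding the $\eta^2\sup E(Iu)^{(k-1)/(k+1)}Z_I(t)$ term; the medium piece is interpolated between $L^4_tL^8_x$ and $L^\infty_t\dot H^1_x$ (the latter $\lesssim Z_I$), yielding $\eta^2 Z_I^{2k-1}$; and for the high piece one uses Sobolev and \eqref{i2} with $\sigma=1-1/k$, so each of the $2k$ copies picks up $N^{-1/k}$ for a total of $N^{-2}Z_I^{2k+1}$.

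Your outline would also close, but it imports the heavy case analysis that is really only needed for the energy-increment estimate (Proposition~\ref{aclaw}), where the commutator structure of the symbol $1-m(\sum\xi_j)/\prod m(\xi_j)$ forces a genuine multilinear decomposition. For the present proposition that machinery is avoidable; the ``$\nabla I$ acts as a derivative'' trick is both the simplification and the point.
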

\begin{proof}
Throughout this proof, all spacetime norms are on $[t_0,t]\times \Bbb R^2$.  By \eqref{localZ} and H\"older's inequality, combined with the fact
that $\nabla I$ acts as a derivative (as the multiplier of $\nabla I$ is increasing in $|\xi|$), we estimate
\\
\begin{equation}\label{l1}
Z_I(t)\lesssim \|\nabla Iu(t_0)\|_2+\|\nabla I(|u|^{2k}u)\|_{L_{t}^{\frac{4}{3}}L_{x}^{\frac{4}{3}}} \lesssim
\end{equation} 
$$\|\nabla Iu(t_0)\|_2+\|u\|_{4k,4k}^{2k}\|\nabla Iu\|_{4,4} \lesssim \|\nabla Iu(t_0)\|_2+\|u\|_{4k,4k}^{2k}Z_I(t).$$
\\
To estimate $\|u\|_{4k,4k}$, we decompose $u:=u_{\le 1}+u_{1<\cdot \le N}+u_{>N}$.  To estimate the low frequencies we use interpolation and obtain
\\
$$\|u_{\leq 1}\|_{4k,4k}^{2k} \lesssim \|u\|_{L_{t}^{4}L_{x}^{8}}^2\|u_{\leq 1}\|_{L_{t}^{\infty}L_{x}^{8(k-1)}}^{2(k-1)}$$
\\
Since for $k \geq 2$ we have that $8(k-1)>2k+2$ by Bernstein's inequality we have that
$$\|u_{\leq 1}\|_{L_{t}^{\infty}L_{x}^{8(k-1)}}\lesssim \|u_{\leq 1}\|_{L_{t}^{\infty}L_{x}^{2k+2}} \lesssim E(Iu)^{\frac{1}{2k+2}}$$
where we use the energy bound. Thus
\begin{equation}\label{l2}
\|u_{\leq 1}\|_{4k,4k}^{2k}\lesssim \eta^2\sup_{s\in[t_{0},t]}E(Iu)^{\frac{k-1}{k+1}}.
\end{equation}
For the medium frequencies again by interpolation we have
$$\|u_{1<\cdot \le N}\|_{4k,4k}^{2k} \lesssim \|u\|_{L_{t}^{4}L_{x}^{8}}^2\|u_{1<\cdot \le N}\|_{L_{t}^{\infty}L_{x}^{8(k-1)}}^{2(k-1)}$$
But by Sobolev embedding
$$\|u_{1<\cdot \le N}\|_{L_{t}^{\infty}L_{x}^{8(k-1)}}^{2(k-1)}\lesssim \||\nabla|^{\frac{4k-5}{4k-4}}u_{1<\cdot \le N}\|_{L_{t}^{\infty}L_x^2}^{2(k-1)}
\lesssim \|\nabla Iu\|_{L_{t}^{\infty}L_x^2}^{2(k-1)}$$
and thus
\begin{equation}\label{l3}
\|u_{1<\cdot \le N}\|_{4k,4k}^{2k} \lesssim \eta^2 Z_{I}(t)^{2(k-1)}.
\end{equation}
Finally to estimate the high frequencies we apply Sobolev embedding and Lemma \ref{basic property} to obtain
\\
$$\|u_{>N}\|_{4k,4k}^{2k} \lesssim \||\nabla|^{1-\frac{1}{k}}u_{>N}\|_{L_{t}^{4k}L_{x}^{\frac{4k}{2k-1}}}^{2k} 
\lesssim N^{-2}\|\nabla Iu\|_{L_{t}^{4k}L_{x}^{\frac{4k}{2k-1}}}^{2k}$$
Since the pair $(4k,\frac{4k}{2k-1})$ is admissible we obtain
\begin{equation}\label{l4}
\|u_{>N}\|_{4k,4k}^{2k}\lesssim N^{-2}Z_{I}(t)^{2k}.
\end{equation}
\\
Using \eqref{l1},\eqref{l2}, \eqref{l3}, and \eqref{l4} we obtain the Proposition.
\end{proof}
\begin{prop}\label{aclaw}
Let $s>1-\frac{1}{2k-1}$ , $k \geq 2, \ \ k \in \Bbb N$, 
and let $u$ be an $H_x^s$ solution to \eqref{nls} on the spacetime slab $[t_0,T]\times \Bbb R^2$ with $E(I u(t_0))\le 1$.
Suppose in addition that
\begin{align}\label{eta}
\|u\|_{L_{t \in [t_0,T]}^4L_{x}^{8}}\le \eta
\end{align}
for a sufficiently small $\eta>0$ (depending on $k$ and on $E(I u(t_0))$). Then we have
\\
\begin{align}
\bigl| & \sup_{s\in [t_0,t]} E(Iu(s))-E(Iu(t_0))\bigr|\label{energy increment} \\
&\lesssim N^{-1+}\Bigl(Z_I(t)^{2k+2}+ \eta^{2}Z_I(t)^2\sup_{s\in [t_0,t]} E(I u(s))^{\frac {k-1}{k+1}}\notag\\
&\qquad \qquad  + \sum_{J=3}^{2k+2}\eta^{\frac {2k+2-J}{2k-1}}Z_I(t)^J \sup_{s\in [t_0,t]} E(Iu(s))^{\frac {(k-1)(2k+2-J)}{(2k-1)(k+1)}}\Bigr)\notag\\
&\quad +N^{-1+}\Bigl(Z_{I}(t)^{2k+1}+ \eta^{2}Z_I(t)\sup_{s\in [t_0,t]} E(Iu(s))^{\frac {k-1}{k+1}}\Bigr)\notag\\
&\qquad \qquad\times \Bigl(Z_{I}(t)^{2k+1}+\eta\sup_{s\in [t_0,t]} E(Iu(s))^{\frac {k}{k+1}}\Bigr)\notag\\
&\quad +N^{-1+}\sum_{J=3}^{2k+2}\eta^{\frac{2k+2-J}{2k-1}}Z_{I}(t)^{J-1}\sup_{s\in [t_0,t]} E(Iu(s))^{\frac {(k-1)(2k+2-J)}{(2k-1)(k+1)}}\notag\\
&\qquad\qquad \times \Bigl(Z_{I}(t)^{2k+1}+\eta \sup_{s\in [t_0,t]} E(Iu(s))^{\frac {k}{k+1}}\Bigr).\notag
\end{align}
\end{prop}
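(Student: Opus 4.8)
The plan is to establish \eqref{energy increment} by differentiating the modified energy in time, integrating back over $[t_0,t]$, and estimating the resulting multilinear spacetime integral with the same Littlewood--Paley/Strichartz apparatus used in Proposition \ref{aclocallaw}. As elsewhere in the paper one argues with smooth solutions and passes to the limit at the end. Using that $Iu$ solves \eqref{Inls}, one first computes
$$\frac{d}{dt}E(Iu)(t)=\Re\int_{\R^2}\overline{\partial_t Iu}\,\bigl(|Iu|^{2k}Iu-I(|u|^{2k}u)\bigr)\,dx,$$
since the remaining contribution $\Re\int\overline{\partial_t Iu}\,\bigl(-\Delta Iu+I(|u|^{2k}u)\bigr)=\Re\,i\|\partial_t Iu\|_2^2$ vanishes. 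Substituting $\partial_t Iu=i\Delta Iu-iI(|u|^{2k}u)$ and integrating over $[t_0,t]$ splits $E(Iu(t))-E(Iu(t_0))$ into a $(2k+2)$-linear piece $\mathrm{A}$, built from $\Delta\overline{Iu}$ against $|Iu|^{2k}Iu-I(|u|^{2k}u)$, and a $(4k+2)$-linear piece $\mathrm{B}$, built from $\overline{I(|u|^{2k}u)}$ against $|Iu|^{2k}Iu-I(|u|^{2k}u)$; these will produce respectively the first block and the product blocks on the right of \eqref{energy increment}.

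In $\mathrm{A}$ one integrates by parts to replace $\Delta\overline{Iu}$ by $\nabla\overline{Iu}$ paired with $\nabla\bigl(|Iu|^{2k}Iu-I(|u|^{2k}u)\bigr)$ --- this is how the homogeneous derivative, hence $\zit$, enters. Passing to Fourier variables on the hyperplane $\sum_{i=1}^{2k+2}\xi_i=0$ and dividing and multiplying the difference of multipliers by $m(\xi_2)\cdots m(\xi_{2k+2})$, the symbol of $\mathrm{A}$ reduces --- up to harmless factors bounded by the $m(\xi_j)$ and by the $\nabla I$ symbols --- to $\symb$, so that schematically $\mathrm{A}\sim\itxi\bigl(\symb\bigr)(\cdots)$. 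Decomposing each of the $2k+2$ inputs into Littlewood--Paley blocks $P_{N_1},\dots,P_{N_{2k+2}}$, the crux of the argument is the pointwise estimate
$$\bigl|\,\symb\,\bigr|\lesssim N^{-1+}\,\bn,$$
with $\bn$ summable over the dyadic parameters: only frequencies $\gtrsim N$ feel the multiplier, the two largest frequencies must be comparable, and the numerator--denominator difference gains one derivative on the top frequency, which after summing the geometric series produces the $N^{-1+}$ gain.

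After inserting this bound, the remaining spacetime integral in $\mathrm{A}$ is controlled by H\"older's inequality, the Strichartz bound $\|\nabla Iu\|_{S^0([t_0,t])}\lesssim\zit$, Bernstein, and Sobolev embedding, via precisely the low/medium/high frequency trichotomy of \eqref{l1}--\eqref{l4}: an input of frequency $\le 1$ is routed through $L_t^4L_x^8$ and the energy (costing a power of $\eta$ and of $\seiu$), an input of frequency in $(1,N]$ through Sobolev embedding into $\nabla Iu$ (costing a power of $\eta$ and of $\zit$), and an input of frequency $>N$ a power of $\zit$; the exponents are dictated by how the $2k+1$ nonlinear inputs split among the three regimes, which yields the first block of \eqref{energy increment} with leading term $N^{-1+}\zit^{2k+2}$. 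In $\mathrm{B}$ one treats $I(|u|^{2k}u)$ as one factor --- by the same trichotomy it obeys $\|I(|u|^{2k}u)\|_{L_t^{4/3}L_x^{4/3}}\lesssim\zit^{2k+1}+\eta\,\seiu^{k/(k+1)}$, and its differentiated version $\zit^{2k+1}+\eta^2\zit\,\seiu^{(k-1)/(k+1)}$ --- and $|Iu|^{2k}Iu-I(|u|^{2k}u)$ as the other, which again carries $\symb$, hence the gain $N^{-1+}$, and whose dual norm is estimated in the same way, contributing $\zit^{2k+1}+\eta\,\seiu^{k/(k+1)}$ together with the $J$-indexed refinements for the mixed-frequency cases. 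Multiplying the two factors produces the remaining blocks of \eqref{energy increment}.

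The one genuinely delicate point is the pointwise commutator estimate $\bigl|\,\symb\,\bigr|\lesssim N^{-1+}\bn$ with $\bn$ summable over the $N_j$; once that is in place, what remains is the combinatorial bookkeeping of which factors are routed into $\zit$ versus into $\seiu$, together with H\"older, Strichartz, Bernstein and Sobolev exactly as in Proposition \ref{aclocallaw}.
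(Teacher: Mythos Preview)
Your outline matches the paper's strategy---differentiate $E(Iu)$, split into the $\Delta\overline{Iu}$ piece (the paper's \eqref{term1}) and the $\overline{I(|u|^{2k}u)}$ piece (\eqref{term2}), Littlewood--Paley decompose, and combine a multiplier bound with Strichartz/Bernstein/Sobolev---but two points deserve correction.

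First, the paper does \emph{not} integrate by parts in \eqref{term1}; it keeps $\Delta\overline{Iu_{N_1}}$ intact, places it in $L^4_{t,x}$, and uses $\|\Delta Iu_{N_1}\|_{4,4}\lesssim N_1\|\nabla Iu_{N_1}\|_{4,4}$. This is cleaner than your integration by parts, which would force the derivative onto the commutator and complicate the symbol needed for the Coifman--Meyer multilinear multiplier theorem (which the paper invokes explicitly and you do not mention). More importantly, your claim of a single pointwise bound $\bigl|\symb\bigr|\lesssim N^{-1+}\bn$ is not correct as stated: in several cases (e.g.\ the paper's Case~$I_c$, $I_d$) the symbol is only bounded by $m(N_1)/\prod_{j\ge 2} m(N_j)$, which can be much larger than~$1$. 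The $N^{-1+}$ gain arises only \emph{after} combining this with the Bernstein substitutions $\|Iu_{N_j}\|\to N_j^{-1}\|\nabla Iu_{N_j}\|$ and the monotonicity of $m(\xi)|\xi|^{1/(2k-1)}$ (this is exactly where the hypothesis $s>1-\tfrac{1}{2k-1}$ enters). The paper carries this out via a genuine case analysis on the frequency hierarchy ($N\gg N_2$; $N_2\gtrsim N\gg N_3$; $N_2\gg N_3\gtrsim N$; $N_2\sim N_3\gtrsim N$; and then the cases with some $N_j=1$), not a single symbol estimate.

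Second, for \eqref{term2} the paper's mechanism is slightly different from the H\"older pairing you describe: it observes that in every case of the analysis of \eqref{term1} the $N_1$-factor entered only through \eqref{1use}, so one may simply replace $\|P_{N_1}\Delta Iu\|_{4,4}\lesssim N_1\zit$ by the bound \eqref{al3}, namely $\|P_{N_1}I(|u|^{2k}u)\|_{4,4}\lesssim \zit^{2k+1}+\eta\,\seiu^{k/(k+1)}$, and rerun the same case analysis. This substitution is what produces the product structure in the last two blocks of \eqref{energy increment}; your $L^{4/3}_{t,x}$ pairing would not directly recover the $N^{-1+}$ gain, since that gain lives inside the multilinear symbol, not in a bilinear H\"older step.
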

\begin{proof}
As
\begin{align*}
\frac d{dt}E(u(t))=\Re \int \bar u_t(|u|^{2k}u-\Delta u)\,dx=\Re \int \bar u_t(|u|^{2k}u-\Delta u-iu_t)\,dx,
\end{align*}
we obtain
\begin{align*}
\frac d{dt}E(Iu(t))&=\Re\int I\bar u_t (|Iu|^{2k}Iu-\Delta I u-iIu_t)\,dx\\
&=\Re \int I\bar u_t (|Iu|^{2k}Iu-I(|u|^{2k}u))\,dx.
\end{align*}
Using the Fundamental Theorem of Calculus and Plancherel, we write
\\
\begin{align*}
E(Iu(t))&-E(Iu(t_0))\\
&=\Re\itxi \Bigl(\symb\Bigr)\\
&\qquad\qquad\qquad \widehat {\overline{I\partial_tu}}(\xi_1) \widehat{Iu}(\xi_2)\cdots\widehat{\overline{Iu}}(\xi_{2k+1}) 
\widehat{Iu}(\xi_{2k+2})\,d\sigma(\xi)\,ds.
\end{align*}
As $iu_t=-\Delta u+|u|^{2k}u$, we thus need to control
\\
\begin{align}\label{term1}
\Bigl|\itxi&\Bigl(\symb\Bigr)\\
&\Delta \widehat{\overline {Iu}}(\xi_1)\widehat {Iu}(\xi_2)\cdots \widehat{\overline{Iu}}(\xi_{2k+1})\widehat {Iu}(\xi_{2k+2})\,d\sigma(\xi)\,ds\Bigr|\notag
\end{align}
and
\\
\begin{align}\label{term2}
\Bigl|\itxi&\Bigl(\symb\Bigr)\\
&\widehat {\overline{I(|u|^{2k}u)}} (\xi_1)\widehat{Iu}(\xi_2)\cdots \widehat{\overline{Iu}}(\xi_{2k+1})\widehat {Iu}(\xi_{2k+2})\,d\sigma(\xi)\,ds\Bigr|.\notag
\end{align}

We first estimate \textbf{\eqref{term1}}.  To this end, we decompose
$$
u:=\sum_{N\geq 1}P_N u
$$
with the convention that $P_1 u: =P_{\leq 1}u$.  Using this notation and symmetry, we estimate
\begin{equation}
\eqref{term1}\lesssim \sum_{\substack{N_1, \dots, N_{2k+2}\geq 1 \\N_2\ge N_3\ge\cdots\geq N_{2k+2}}}B(N_1,\dots,N_{2k+2}),\label{al2}
\end{equation}
where
\begin{align*}
&B(N_1,\dots,N_{2k+2})\\
&\qquad:=\Bigl|\itxi\Bigl(\symb\Bigr)    \\
&\qquad \qquad \qquad \Delta\widehat{\overline{Iu_{N_1}}}(\xi_1)\widehat{Iu_{N_2}}(\xi_2)\cdots 
\widehat{\overline{Iu_{N_{2k+1}}}}(\xi_{2k+2}) \widehat {Iu_{N_{2k+2}}}(\xi_{2k+2})d\sigma(\xi)ds\Bigr|.
\end{align*}
\\
\\
\noindent{\bf Case $I$:} $N_1> 1$, $N_2\ge \cdots \ge N_{2k+2}> 1$.
\\
\\
\noindent \textbf{Case $I_a$:}  $N\gg N_2$.
\\
In this case,
$$
m(\xi_2+\xi_3+\cdots+\xi_{2k+2})=m(\xi_2)=\dots=m(\xi_{2k+2})=1.
$$
Thus,
$$B(N_1,\dots,N_{2k+2})=0$$
and the contribution to the right-hand side of \eqref{al2} is zero.
\\
\\
\noindent \textbf{Case $I_b$:} $N_2\gtrsim N\gg N_3$.
\\
\\
As $\sum_{i=1}^{2k+2}\xi_i=0$, we must have $N_1\sim N_2$.  Thus, by the Fundamental Theorem of Calculus,
\\
\begin{align*}
\Bigl|\symb\Bigr|&=\Bigl|1-\frac {m(\xi_2+\cdots+\xi_{2k+2})}{m(\xi_2)}\Bigr| \\
&\lesssim \Bigl|\frac {\nabla m(\xi_2)(\xi_3+\cdots+\xi_{2k+2})}{m(\xi_2)}\Bigr|\lesssim \frac {N_3}{N_2}.
\end{align*}
\\
Applying the multilinear multiplier theorem of Coifman and Meyer
(cf. \cite{cmfourier}, and \cite{cmaster}), Sobolev embedding, Bernstein, and recalling that $N_j>1$, we estimate
\begin{align*}
&B(N_1,\dots,N_{2k+2}) \\
&\qquad \lesssim \frac {N_3}{N_2}\|\Delta Iu_{N_1}\|_{4,4}\|Iu_{N_2}\|_{4,4}\|Iu_{N_3}\|_{4,4}\prod_{j=4}^{2k+2}\|Iu_{N_j}\|_{4(2k-1),4(2k-1)}\\
&\qquad \lesssim \frac {N_1}{N_2^2}\prod_{j=1}^3\|\nabla I u_{N_j}\|_{4,4} \prod_{j=4}^{2k+2}\||\nabla|^{\frac {k-2}{2k-1}} Iu_{N_j}\|_{4(2k-1),\frac {4(2k-1)}{4k-3}}\\
&\qquad \lesssim \frac 1{N_2}Z_I(t)^{2k+2}
\lesssim N^{-1+}N_2^{0-}Z_I(t)^{2k+2}.
\end{align*}
The factor $N_2^{0-}$ allows us to sum in $N_1,N_2,\dots, N_{2k+2}$, this case contributing at most $N^{-1+}Z_I(t)^{2k+2}$ to the
right-hand side of \eqref{al2}.
\\
\\
\noindent \textbf{Case $I_c$:} $N_2\gg N_3\gtrsim N$.
\\
\\
As $\sum_{i=1}^{2k+2}\xi_i=0$, we must have $N_1\sim N_2$.  Thus, as $m$ is decreasing,
\\
$$\Bigl|\symb\Bigr|\lesssim \frac {m(\xi_1)}{m(\xi_2)\cdots m(\xi_{2k+2})}.$$
\\
Using again the multilinear multiplier theorem, Sobolev embedding, Bernstein, and the fact that $m(\xi)|\xi|^{\frac{1}{2k-1}}$ is increasing
for $s>1-\frac{1}{2k-1}$, we estimate
\\
\begin{align*}
& \ B(N_1,\dots,N_{2k+2})   \\
&\lesssim \frac {m(N_1)}{m(N_2)\cdots m(N_{2k+2})}\frac {N_1}{N_2N_3} \prod_{j=1}^3\|\nabla I u_{N_j}\|_{4,4}
\prod_{j=4}^{2k+2}\||\nabla|^{\frac {2(k-1)}{2k-1}}Iu_{N_j}\|_ {4(2k-1),\frac {4(2k-1)}{4k-3}}\\
&\lesssim \frac 1{N_3m(N_3)\prod_{j=4}^{2k+2}m(N_j)N_j^{\frac {1}{2k-1}}}\prod_{j=1}^3\|\nabla I u_{N_j}\|_{4,4} 
\prod_{j=4}^{2k+2}\|\nabla I u_{N_j}\|_{4(2k-1),\frac{4(2k-1)}{4k-3}}\\
&\lesssim \frac 1{N_3m(N_3)}\|\nabla I u_{N_1}\|_{4,4} \|\nabla I u_{N_2}\|_{4,4}Z_I(t)^{2k}\\
&\lesssim N^{-1+}N_3^{0-}\|\nabla I u_{N_1}\|_{4,4} \|\nabla I u_{N_2}\|_{4,4}Z_I(t)^{2k}.
\end{align*}
The factor $N_3^{0-}$ allows us to sum over $N_3, \dots ,N_{2k+2}$.  To sum over $N_1$ and $N_2$, we use the fact that $N_1\sim N_2$
and Cauchy-Schwarz to estimate the contribution to the right-hand side of \eqref{al2} by
\begin{align*}
N^{-1+}\Bigl(\sum_{N_1>1}\|\nabla Iu_{N_1}\|_{4,4}^2\Bigr)^{\frac 12}\Bigl(\sum_{N_2>1}\|\nabla I u_{N_2}\|_{4,4}^2\Bigr)^{\frac 12}Z_I(t)^{2k}
\lesssim N^{-1+}Z_I(t)^{2k+2}.
\end{align*}
\\
\noindent \textbf{Case $I_d$:} $N_2\sim N_3\gtrsim N$.
\\
\\
As $\sum_{i=1}^{2k+2}\xi_i=0$, we obtain $N_1\lesssim N_2$, and hence $m(N_1)\gtrsim m(N_2)$ and $m(N_1)N_1\lesssim m(N_2)N_2$.  Thus,
$$
\Bigl|\symb\Bigr|\lesssim \frac {m(N_1)}{m(N_2)m(N_3)\cdots m(N_{2k+2})}.
$$
Arguing as for Case $I_c$, we estimate
\begin{align*}
B(N_1,\dots, N_{2k+2})&\lesssim \frac {m(N_1)N_1}{m(N_2)N_2m(N_3)N_3\prod_{j=4}^{2k+2}m(N_j)N_j^{\frac
{1}{2k-1}}}Z_I(t)^{2k+2}\\
&\lesssim \frac 1{m(N_3)N_3}Z_I(t)^{2k+2}\\
&\lesssim N^{-1+}N_3^{0-}Z_I(t)^{2k+2}.
\end{align*}
The factor $N_3^{0-}$ allows us to sum over $N_1, \dots, N_{2k+2}$.  This case contributes at most $N^{-1+}Z_I(t)^{2k+2}$
to the right-hand side of \eqref{al2}.
\\
\\
\noindent \textbf{Case $II$:} There exists $1\leq j_0\leq 2k+2$ such that $N_{j_0}=1$.  Recall that by our convention, $P_1:=P_{\leq 1}$.
\\
\\
\noindent \textbf{Case $II_a$:} $N_1=1$.
\\
\\
Let $J$ be such that $N_2\ge \dots\geq N_J> 1=N_{J+1}=\dots=N_{2k+2}$.  Note that we may assume $J\geq 3$ since otherwise
$$
B(N_1,\dots,N_{2k+2})=0.
$$
Also, arguing as for Case $I_a$, if $N\gg N_2$ then
$$
B(N_1,\dots,N_{2k+2})=0.
$$
Thus, we may assume $N_2\gtrsim N$.  In this case we cannot have $N_2\gg N_3$ since it would contradict $\sum_{i=1}^{2k+2}\xi_i=0$ and $N_1=1$.
Hence, we must have
$$
N_2\sim N_3\gtrsim N.
$$
As
$$
\Bigl|\symb\Bigr|\lesssim \frac 1{m(N_2)m(N_3)\cdots m(N_{2k+2})},
$$
we use the multilinear multiplier theorem and Sobolev embedding to estimate
\begin{align*}
& \ B(N_1,\dots,N_{2k+2}) \\
&\lesssim \frac {N_1}{m(N_2)N_2m(N_3)N_3m(N_4)\cdots m(N_{2k+2})}\prod_{j=1}^3\|\nabla Iu_{N_j}\|_{4,4}\\
&\qquad \qquad \times \prod_{j=4}^J\||\nabla|^{\frac {2(k-1)}{2k-1}}Iu_{N_j}\|_{4(2k-1),\frac {4(2k-1)}{4k-3}}\prod _{j=J+1}^{2k+2}\|Iu_{N_j}\|_{4(2k-1),4(2k-1)} \\
&\lesssim \frac 1{m(N_2)N_2m(N_3)N_3\prod_{j=4}^Jm(N_j)N_j^{\frac{1}{2k-1}}}Z_I(t)^J \prod_{j=J+1}^{2k+2}\|I u_{N_j}\|_{4(2k-1),4(2k-1)}\\
&\lesssim N^{-2+}N_2^{0-}Z_I(t)^J\prod_{j=J+1}^{2k+2}\|Iu_{N_j}\|_{4(2k-1),4(2k-1)}.
\end{align*}
Applying interpolation, the bound for the $L_t^4L_x^8$ norm of $u$ that we assumed \eqref{eta}, and Bernstein, we bound
\begin{align}
\|Iu_{\leq 1}\|_{4(2k-1),4(2k-1)}
&\lesssim \|Iu_{\leq 1}\|_{L_t^4L_x^8}^{\frac{1}{2k-1}}\| I u_{\leq 1}\|_{L_t^{\infty}L_x^{16(k-1)}}^{\frac {2(k-1)}{2k-1}}\label{low}\\
&\lesssim \|Iu_{\leq 1}\|_{L_t^4L_x^8}^{\frac{1}{2k-1}}\| I u_{\leq 1}\|_{L_t^{\infty}L_x^{2k+2}}^{\frac {2(k-1)}{2k-1}}\\
&\lesssim \eta^{\frac{1}{2k-1}}\sup_{s\in[t_0,t]} E(Iu(s))^{\frac {k-1}{(2k-1)(k+1)}}. \nonumber
\end{align}
Thus,
\begin{align*}
B(N_1, \dots,  N_{2k+2})
\lesssim N^{-2+} N_2^{0-}\eta^{\frac {2k+2-J}{2k-1}} Z_I(t)^J \sup_{s\in[t_0,t]}E(I u(s))^{\frac {(k-1)(2k+2-J)}{(2k-1)(k+1)}}.
\end{align*}
The factor $N_2^{0-}$ allows us to sum in $N_2, \dots, N_J$.  This case contributes at most
$$
N^{-2+} \sum_{J=3}^{2k+2}\eta^{\frac {2k+2-J}{2k-1}}Z_I(t)^J \sup_{s\in[t_0,t]}E(I u(s))^{\frac {(k-1)(2k+2-J)}{(2k-1)(k+1)}}
$$
to the right-hand side of \eqref{al2}.
\\
\\
\noindent \textbf{Case $II_b$:} $N_1> 1$ and $N_2=\dots=N_{2k+2}=1$.
\\
\\
As $\sum_{i=1}^{2k+2}\xi_i=0$, we obtain $N_1\lesssim 1$ and thus, taking $N$ sufficiently large depending on $k$, we get
$$\symb=0.$$
\\
This case contributes zero to the right-hand side of \eqref{al2}.
\\
\\
\noindent \textbf{Case $II_c$:} $N_1> 1$ and $N_2>1=N_3=\dots=N_{2k+2}$.
\\
\\
As $\sum_{i=1}^{2k+2}\xi_i=0$, we must have $N_1\sim N_2$.  If $N_1\sim N_2\ll N$, then
$$
\symb=0
$$
and the contribution is zero.  Thus, we may assume $N_1\sim N_2 \gtrsim N$. Applying the Fundamental Theorem of Calculus,
\\
\begin{align*}
\Bigl|\symb\Bigr|&=\Bigl|1-\frac {m(\xi_2+\cdots+\xi_{2k+2})}{m(\xi_2)}\Bigr| \\
&\lesssim \Bigl|\frac {\nabla m(\xi_2)}{m(\xi_2)}\Bigr|\lesssim \frac {1}{N_2}.
\end{align*}
By the multilinear multiplier theorem,
\begin{align*}
B(N_1,\dots,N_{2k+2})
&\lesssim \frac {1}{N_2}\|\Delta I u_{N_1}\|_{4,4}\|Iu_{N_2}\|_{4,4}\prod_{j=3}^{2k+2}\|Iu_{N_j}\|_{4k,4k}\\
&\lesssim \frac {N_1}{N_2^2}\|\nabla I u_{N_1}\|_{4,4} \|\nabla I u_{N_2}\|_{4,4} \|Iu_{\leq 1}\|_{4k,4k}^{2k}\\
&\lesssim N^{-1+}N_2^{0-}Z_I(t)^2\|Iu_{\leq 1}\|_{4k,4k}^{2k}.
\end{align*}
The factor $N_2^{0-}$ allows us to sum in $N_1$ and $N_2$.  Using interpolation, \eqref{i1}, \eqref{eta}, and Bernstein, we estimate
\begin{align*}
\|Iu_{\le 1}\|_{4k,4k}
&\lesssim \|Iu_{\le 1}\|_{L_t^4L_x^8}^{\frac{1}{k}}\|Iu_{\le 1}\|_{L_t^{\infty}L_x^{8(k-1)}}^{1-\frac{1}{k}}\\
&\lesssim \eta^{\frac {1}{k}}\|Iu_{\le 1}\|_{L_t^{\infty}L_x^{2k+2}}^{1-\frac{1}{k}}\\
&\lesssim \eta^{\frac 1{k}}\sup_{s\in [t_0,t]} E(I u(s))^{\frac {k-1}{2k(k+1)}}.
\end{align*}
Thus, this case contributes at most
$$
N^{-1+}\eta^{2}Z_I(t)^2\sup_{s\in [t_0,t]} E(I u(s))^{\frac {k-1}{k+1}}
$$
to the right-hand side of \eqref{al2}.
\\
\\
\noindent \textbf{Case $II_d$:} $N_1> 1$ and there exists $J\ge 3$ such that $N_2\ge \dots \ge N_J> 1 =N_{J+1}=\dots=N_{2k+2}$.
\\
\\
To estimate the contribution of this case, we argue as for Case $I$; the only new ingredient is that the low frequencies are estimated via \eqref{low}.
This case contributes at most
\\
\begin{align*}
N^{-1+}\sum_{J=3}^{2k+2}\eta^{\frac {2k+2-J}{2k-1}}Z_I(t)^J \sup_{s\in[t_0,t]} E(Iu(s))^{\frac {(k-1)(2k+2-J)}{(2k-1)(k+1)}}
\end{align*}
to the right-hand side of \eqref{al2}.
\\
\\
Putting everything together, we get
\\
\begin{align}
\eqref{term1}
&\lesssim N^{-1+}Z_I(t)^{2k+2} + N^{-1+}\eta^{2}Z_I(t)^2\sup_{s\in [t_0,t]} E(I u(s))^{\frac {k-1}{k+1}} \notag\\
&\quad + N^{-1+}\sum_{J=3}^{2k+2}\eta^{\frac {2k+2-J}{2k-1}}Z_I(t)^J \sup_{s\in [t_0,t]} E(Iu(s))^{\frac {(k-1)(2k+2-J)}{(2k-1)(k+1)}}.\label{term1 est}
\end{align}
\\
\\
We turn now to estimating \textbf{\eqref{term2}}.  Again we decompose
$$
u:=\sum_{N\geq 1}P_N u
$$
with the convention that $P_1 u: =P_{\leq 1}u$.  Using this notation and symmetry, we estimate
\begin{align*}
\eqref{term2}\lesssim \sum_{\substack{N_1, \dots, N_{2k+2}\geq 1 \\ N_2\ge \cdots\ge N_{2k+2}}} C(N_1,\cdots,N_{2k+2}),
\end{align*}
where
\begin{align*}
C(& N_1,\cdots, N_{2k+2})\\
&:=\Bigl|\itxi\Bigl(\symb\Bigr)\\
&\qquad \widehat{\overline{P_{N_1}I(|u|^{2k}u)}}(\xi_1)\widehat{I u_{N_2}}(\xi_2)\cdots \widehat{\overline{Iu_{N_{2k+1}}}}
(\xi_{2k+1})\widehat{Iu_{N_{2k+2}}}(\xi_{2k+2})\, d\sigma(\xi)\,ds\Bigr|.
\end{align*}
\\
In order to estimate $C(N_1,\cdots, N_{2k+2})$ we make the observation that in estimating $B(N_1,\cdots, N_{2k+2})$,
for the term involving the $N_1$ frequency we only used the bound
\\
\begin{align}\label{1use}
\|P_{N_1}I\Delta u\|_{4,4}\lesssim N_1 \|\nabla Iu_{N_1}\|_{4,4}\lesssim N_1 Z_I(t).
\end{align}
\\
Thus, to estimate \eqref{term2} it suffices to prove
\\
\begin{equation}\label{al3}
\|P_{N_1}I(|u|^{2k}u)\|_{4,4}\lesssim Z_I(t)^{2k+1}+\eta \sup_{s\in [t_0,t]} E(Iu(s))^{\frac{k}{k+1}},
\end{equation}
\\
for then, arguing as for \eqref{term1} and substituting \eqref{al3} for \eqref{1use}, we obtain
\\
\begin{align*}
\eqref{term2}
&\lesssim N^{-1+}\Bigl(\zit^{2k+1}+ \eta^{2}Z_I(t)\sup_{s\in [t_0,t]} E(I u(s))^{\frac {k-1}{k+1}}\Bigr)\\
&\qquad \qquad\times \Bigl(\zit^{2k+1}+\eta \sup_{s\in [t_0,t]} E(I u(s))^{\frac {k}{k+1}}\Bigr)\\
&\quad +N^{-1+}\sum_{J=3}^{2k+2}\eta^{\frac{2k+2-J}{2k-1}}\zit^{J-1}\sup_{s\in [t_0,t]} E(I u(s))^{\frac {(k-1)(2k+2-J)}{(2k-1)(k+1}}\\
&\qquad\qquad \times \Bigl(\zit^{2k+1}+\eta \sup_{s\in [t_0,t]} E(I u(s))^{\frac {k}{k+1}}\Bigr).
\end{align*}
\\
Thus, we are left to proving \eqref{al3}.  Using \eqref{i1} and the boundedness of the Littlewood-Paley operators, and decomposing
$u:=u_{\leq 1}+u_{>1}$, we estimate
\\
\begin{align*}
\|P_{N_1}I(|u|^{2k}u)\|_{4,4}
&\lesssim \|u\|_{4(2k+1),4(2k+1)}^{2k+1}\\
&\lesssim\|u_{\le 1}\|_{4(2k+1),4(2k+1)}^{2k+1}+\|u_{>1}\|_{4(2k+1),4(2k+1)}^{2k+1}.
\end{align*}
\\
Applying interpolation, \eqref{eta}, and Bernstein, we estimate
\\
\begin{align*}
\|u_{\le 1}\|_{4(2k+1),4(2k+1)}^{2k+1}
&\lesssim\|u_{\le 1}\|_{L_t^4L_x^8}\|u_{\le 1}\|_{L_t^{\infty}L_x^{16k}}^{2k} \lesssim \eta \|u_{\le 1}\|_{L_t^{\infty}L_x^{2k+2}}^{2k}\\
&\lesssim \eta \sup_{s\in [t_0,t]} E(I u(s))^{\frac{k}{k+1}}.
\end{align*}
\\
Finally, by Sobolev embedding and \eqref{i2},
\\
\begin{align*}
\|u_{>1}\|_{4(2k+1),4(2k+1)}^{2k+1}
&\lesssim\||\nabla|^{\frac{2k}{2k=1}}u_{>1}\|_{4(2k+1),\frac{4(2k+1)}{4k+1}}^{2k+1}
\lesssim \zit^{2k+1}.
\end{align*}
Putting things together, we derive \eqref{al3}. This completes the proof of Proposition \ref{aclaw}.
\end{proof}
Now we will combine Proposition \ref{aclocallaw} and \ref{aclaw} and prove that the quantity $E(Iu)(t)$ is ``almost conserved''.
\begin{prop}\label{aclawfinal}
Let $s>\frac{1}{2k-1}$ and let $u$ be an $H_x^s$ solution to \eqref{nls} on the spacetime slab $[t_0,T]\times \R^2$ with $E(I_N u(t_0))\le 1$.
Suppose in addition that
\begin{align}\label{ms}
\|u\|_{L_{t \in [t_0,T]}^4L_{x}^8}\le \eta
\end{align}
for a sufficiently small $\eta>0$ (depending on $k$ and on $E(I_N u(t_0))$). Then, for $N$ sufficiently large (depending on $k$ and on $E(I_N u(t_0))$),
\\
\begin{equation}\label{eng growth}
\sup_{t\in [t_0,T]}E(I_N u(t))=E(I_N u(t_0))+ N^{-1+}.
\end{equation}
\end{prop}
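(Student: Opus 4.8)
The plan is to run a continuity (bootstrap) argument on $[t_0,T]$ that feeds Proposition \ref{aclocallaw} and Proposition \ref{aclaw} into one another. First note that $E(I_Nu(t_0))\le 1$ together with the definition \eqref{modenergy} of the modified energy gives $\|\nabla I_Nu(t_0)\|_2\le\sqrt2$. By the local well-posedness and Strichartz theory of Section 2 applied to the regularized solution $Iu$, the quantities $t\mapsto Z_I(t)$ and $t\mapsto\sup_{s\in[t_0,t]}E(I_Nu(s))$ are finite, nondecreasing and continuous on $[t_0,T]$, with $Z_I(t_0)=0$ and $\sup_{s\in[t_0,t_0]}E(I_Nu(s))=E(I_Nu(t_0))\le 1$. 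This is what allows the bootstrap to start.

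Fix a constant $K$, large in terms of the implicit constant in \eqref{zit control} (so depending only on $k$), and let $t^\ast$ be the supremum of those $t\in[t_0,T]$ for which both $\sup_{s\in[t_0,t]}E(I_Nu(s))\le 2$ and $Z_I(t)\le K$. On $[t_0,t^\ast)$ we invoke \eqref{zit control}: under the bootstrap hypotheses its last three terms are $\le C\bigl(N^{-2}K^{2k}+\eta^2K^{2k-2}+\eta^2\bigr)Z_I(t)$, so choosing $\eta$ small and $N$ large depending on $k$ and $K$ absorbs them, leaving $Z_I(t)\le C_1\|\nabla I_Nu(t_0)\|_2=:C_0$ with $C_0$ depending only on $k$; having fixed $K:=2C_0$ at the outset, this is the strict improvement $Z_I(t)\le K/2$. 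Feeding $Z_I(t)\le C_0$, $\eta\le 1$ and $\sup_{s\in[t_0,t]}E(I_Nu(s))\le 2$ into \eqref{energy increment}, every term on its right-hand side is a monomial in these bounded quantities times $N^{-1+}$, so the whole increment is $\le C_2N^{-1+}$ for a constant $C_2$ depending only on $k$; enlarging $N$ further so that $C_2N^{-1+}\le\tfrac12$ yields the strict improvement $\sup_{s\in[t_0,t]}E(I_Nu(s))\le E(I_Nu(t_0))+C_2N^{-1+}\le 1+\tfrac12<2$.

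Both defining inequalities of $t^\ast$ are thus strictly improved on $[t_0,t^\ast)$; by the continuity of $Z_I(\cdot)$ and of $\sup_{s\in[t_0,\cdot]}E(I_Nu(s))$ this forces $t^\ast=T$, since otherwise one could extend the interval past $t^\ast$. Hence the two improved estimates hold on all of $[t_0,T]$, and in particular $\sup_{t\in[t_0,T]}E(I_Nu(t))\le E(I_Nu(t_0))+C_2N^{-1+}$, which is exactly \eqref{eng growth}.

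The genuine analytic content has already been spent in Propositions \ref{aclocallaw} and \ref{aclaw}, so the only thing that really needs care here is the internal consistency of the parameter choices: one must pick the bootstrap threshold $K$ from the constant in \eqref{zit control} before shrinking $\eta$ and enlarging $N$ in terms of $K$ (equivalently, in terms of $k$ and the bound $E(I_Nu(t_0))\le 1$), exactly the hierarchy of dependences already built into the hypotheses of those two propositions. The secondary point to check is the a priori finiteness and continuity of $Z_I(t)$, which the local theory supplies and without which the continuity argument would not get off the ground.
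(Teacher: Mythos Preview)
Your argument is correct and follows the same bootstrap strategy as the paper, which phrases it via open/closed sets $\Omega_1\subset\Omega_2$ rather than a supremum time $t^\ast$; the content is identical. One small slip: $Z_I(t_0)$ is not zero (the admissible pair $(q,r)=(\infty,2)$ gives $Z_I(t_0)\sim\|\nabla Iu(t_0)\|_2$), but since your threshold $K=2C_0$ dominates this quantity anyway the bootstrap still starts and nothing in the argument is affected.
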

\begin{proof}
Indeed, Proposition~\ref{aclawfinal}
follows immediately from Propositions \ref{aclocallaw} and \ref{aclaw}, if we establish
\begin{equation*}
Z_I(t)\lesssim 1 \quad \text{and} \quad \sup_{s\in [t_0,t]} E(I_N u(s))\lesssim 1 \quad \text{for all } t\in[t_0,T].
\end{equation*}
\\
As by assumption $E(I_N u(t_0))\leq 1$, it suffices to show that
\\
\begin{equation}\label{bdd1}
Z_I(t)\lesssim\|\nabla I_N u(t_0)\|_2 \quad \text{for all } t\in[t_0,T]
\end{equation}
and
\begin{equation}\label{bdd2}
\sup_{s\in [t_0,t]} E(I_N u(s))\lesssim E(I_N u(t_0)) \quad \text{for all } t\in[t_0,T].
\end{equation}
We achieve this via a bootstrap argument. We want to show
 that the set of times that those two properties hold is the set $[0, \infty)$. We define
\\
\begin{align*}
\Omega_1&:=\{t\in[t_0,T]:\, Z_I(t)\le C_1\|\nabla I_N u(t_0)\|_2,\\
    & \qquad \qquad \qquad \quad \sup_{s\in [t_0,t]} E(I_N u(s))\le C_2 E(I_N u(t_0))\}\\
\Omega_2&:=\{t\in[t_0,T]:\, Z_I(t)\le 2 C_1\|\nabla I_N u(t_0)\|_2,\\
    &\qquad \qquad \qquad  \quad \sup_{s\in [t_0,t]} E(I_N u(s)) \le 2C_2 E(I_Nu(t_0))\}.
\end{align*}
\\
 If we can prove that $\Omega_1$ is nonempty, open and closed then since the set $[0,\infty)$ is connected we must have that $\Omega_1=[0,\infty)$. 
Thus in order to run the bootstrap argument successfully, we need to check four things:
\\
i)\ $\Omega_1\neq \emptyset$. This is satisfied as $t_0\in \Omega_1$ if we take $C_1$ and $C_2$ sufficiently large.
\\
\\
ii)\ $\Omega_1$ is a closed set. This follows from Fatou's Lemma. 
\\
\\
iii)\ If $t\in \Omega_1$, then there exists $\epsilon>0$ such that $[t,t+\epsilon]\in \Omega_2$. This follows from the Dominated Convergence Theorem
combined with \eqref{zit control} and \eqref{energy increment}.
\\
\\
iv)\ $\Omega_2\subset \Omega_1$.  This follows from \eqref{zit control} and \eqref{energy increment} taking $C_1$ and $C_2$
sufficiently large depending on absolute constants (like the Strichartz constant) and choosing $N$ sufficiently
large and $\eta$ sufficiently small depending on $C_1$, $C_2$, $k$, and $E(I_N u(t_0))$.
\\
\\
The last two statements prove that $\Omega_1$ is open and the Proposition \ref{aclawfinal} is proved.
\end{proof}
Finally we are ready to prove Theorem 4. 
\begin{proof}
Given Proposition \ref{aclawfinal}, the proof of global well-posedness for \eqref{nls} is reduced to showing
\begin{equation}\label{ma bound}
\|u\|_{L_t^4L_x^8}\le C(\|u_0\|_{\hs}).
\end{equation}
This also implies scattering, as we will see later by an argument close to what we used to obtain Theorem 3. We have
 proved that
\begin{align}\label{ma control}
\|u\|_{L_t^4L_x^8}\lesssim \|u_0\|_2^{1/2} \|u\|_{L_t^\infty \dot H^{1/2}_x (\ir)}^{1/2}
\end{align}
on any spacetime slab $I \times \Bbb R^2$ on which the solution to \eqref{nls} exists and lies in $H_x^{1/2}$.
However, the $H_x^{1/2}$ norm of the solution is not a conserved quantity either, and in order to control it we must resort to
the $\hs$ bound on the solution.  As we remarked at the beginning of this section this will be achieved by controlling $\|Iu\|_{\dot{H}^1}$. 
Thus, in order to obtain a global Morawetz estimate, we need a global bound for $\|Iu\|_{\dot{H}^1}$. This will be done by patching together
 time intervals where the norm $\|u\|_{L_t^4L_x^8}$ is very small. This sets us up for a bootstrap argument.
\\
\\
Let $u$ be the solution to \eqref{nls}. As $E(Iu_0)$ is not necessarily small, we first rescale the solution such that the energy
of the rescaled initial data satisfies the conditions in Proposition \ref{aclawfinal}. By scaling,
$$
\ulam(x,t):=\lambda^{-\frac 1k} u({\lambda}^{-2} t,{\lambda^{-1}} x)
$$
is also a solution to \eqref{nls} with initial data
$$
u_0^{\lambda}(x):=\lambda^{-\frac1k}u_0({\lambda}^{-1} x).
$$
\\
By \eqref{i4} and Sobolev embedding for $s\geq 1-\frac{1}{k+1}$,
\\
\begin{align*}
\|\nabla Iu_0^{\lambda}\|_2
&\lesssim N^{1-s}\|u_0^{\lambda}\|_{\dhs}=N^{1-s}\lambda^{1-\frac{1}{k}-s}\|u_0\|_{\dhs},\\ \\
\|I_N u_0^{\lambda}\|_{2k+2}&\lesssim \|u_0^{\lambda}\|_{2k+2}=\lambda^{\frac 1{k+1}-\frac 1k}\|u_0\|_{2k+2}\lesssim\lambda^{\frac 1{k+1}-\frac 1k}\|u_0\|_{\hs}.
\end{align*}
\\
Since $s>1-\frac{1}{4k-3}>1-\frac{1}{k+1}>1-\frac{1}{k}$, choosing $\lambda$ sufficiently large (depending on $\|u_0\|_{\hs}$ and $N$) such that
\begin{equation}\label{lambdachoice}
N^{1-s} \lambda^{1 -\frac{1}{k} -s} \|u_0\|_{\hs} \ll 1 \quad \text{and}\quad \lambda^{\frac 1{k+1}-\frac 1k}\|u_0\|_{\hs}\ll 1,
\end{equation}
we get
$$
E(I_N u_0^{\lambda})\ll1.
$$
Thus
$$\lambda \sim N^{\frac{s-1}{1-s-\frac{1}{k}}}$$
We now show that there exists an absolute constant $C_1$ such that
\begin{equation}\label{rescaled ma}
\|\ulam\|_{L_t^4L_x^8}\le C_1\lambda^{\frac{3}{4}(1-\frac{1}{k})}.
\end{equation}
Undoing the scaling, this yields \eqref{ma bound}.
\\
\\
We prove \eqref{rescaled ma} via a bootstrap argument.  By time reversal symmetry, it suffices to argue for positive times only.  Define
\begin{align*}
\Omega_1&:=\{t\in[0,\infty):\, \|\ulam\|_{L_{t}^{4}L_{x}^{8}([0,t]\times \Bbb R^2)}\le C_1\lambda^{\frac{3}{4}(1-\frac{1}{k})},\} \\
\Omega_2&:=\{t\in[0,\infty):\, \|\ulam\|_{L_{t \in [0,t]}^{4}L_{x}^{8}([0,t]\times \Bbb R^2)}\le 2C_1\lambda^{\frac{3}{4}(1-\frac{1}{k})}.\}
\end{align*}
In order to run the bootstrap argument, we need to verify four things:
\\
1) $\Omega_1\neq\emptyset$.  This is obvious as $0\in\Omega_1$.
\\
2) $\Omega_1$ is closed.  This follows from Fatou's Lemma.
\\
3) $\Omega_2\subset\Omega_1$.
\\
4) If $T\in\Omega_1$, then there exists $\epsilon>0$ such that $[T,T+\epsilon)\subset \Omega_2$.  This is a consequence of the local
well-posedness theory and the proof of 3). We skip the details.
\\
\\
Thus, we need to prove 3).  Fix $T\in \Omega_2$; we will show that in fact, $T\in\Omega_1$.
By \eqref{ma control} and the conservation of mass,
\begin{align*}
\|\ulam\|_{L_{t}^{4}L_{x}^{8}([0,t]\times \Bbb R^2)}
&\lesssim \|u_0^{\lambda}\|_2^{\frac 12}\|\ulam\|^{\frac 12}_{L_t^\infty \dot H_x^{1/2}([0,T]\times\Bbb R^2)}\\
&\lesssim \lambda^{\frac{1}{2}(1-\frac{1}{k})} C(\|u_0\|_2) \|\ulam\|_{L_t^\infty \dot H_x^{1/2}([0,T]\times\Bbb R^2)}^{\frac 12}.
\end{align*}
To control the factor $\|\ulam\|_{L_t^\infty \dot H_x^{1/2}([0,T]\times\Bbb R^2)}$, we decompose
$$
\ulam(t):=P_{\le N}\ulam(t)+P_{>N}\ulam(t).
$$
To estimate the low frequencies, we interpolate between the $\lxt$ norm and the $\ho$ norm and use the fact that $I$ is
the identity on frequencies $|\xi|\le N$
\begin{align*}
\|P_{\le N}\ulam(t)\|_{\dot H_x^{1/2}}
&\lesssim \|P_{\le N}\ulam(t)\|_2^{\frac 12}\|P_{\le N}\ulam(t)\|_{\ho}^{\frac 12}\\
&\lesssim\lambda^{\frac{1}{2}(1-\frac{1}{k})}C(\|u_0\|_2)\|I_N \ulam(t)\|_{\ho}^{\frac 12}.
\end{align*}
To control the high frequencies, we interpolate between the $\lxt$ norm and the $\dhs$ norm and use Lemma \ref{basic property} and
 the relation between $N$ and $\lambda$ to get
\begin{align*}
\|P_{>N}\ulam(t)\|_{\dot H^{1/2}_x}
&\lesssim \|P_{>N}\ulam(t)\|_{\lxt}^{1-\frac 1{2s}}\|P_{>N}\ulam(t)\|_{\dhs}^{\frac 1{2s}}\\
&\lesssim \lambda^{(1-\frac 1{2s})(1-\frac{1}{k})}N^{\frac{s-1}{2s}}\|I\ulam(t)\|_{\ho}^{\frac1{2s}}\\
&\lesssim \lambda^{\frac{1}{2}-\frac{1}{k}}\|I\ulam(t)\|_{\ho}^{\frac 1{2s}}.
\end{align*}
Collecting all these estimates, we get
\begin{align*}
\|\ulam\|_{L_{t}^{4}L_{x}^{8}([0,t]\times \Bbb R^2)}
&\lesssim \lambda^{\frac{3}{4}(1-\frac{1}{k})} C(\|u_0\|_2)
   \sup_{t\in [0,T]}\bigl(\|\nabla I \ulam(t)\|_2^{\frac 14}+\|\nabla I\ulam(t)\|_2^{\frac 1{4s}}\bigr).
\end{align*}
Thus, taking $C_1$ sufficiently large depending on $\|u_0\|_2$, we obtain $T\in \Omega_1$, provided
\begin{equation}\label{bdd kinetic}
\sup_{t\in [0,T]}\|\nabla I\ulam(t)\|_2\le 1.
\end{equation}
We now prove that $T\in\Omega_2$ implies \eqref{bdd kinetic}.  Indeed, let $\eta>0$ be a sufficiently small constant like in
Proposition~\ref{aclawfinal} and divide $[0,T]$ into
$$
L\sim \biggl(\frac{\lambda^{\frac{3}{4} (1-\frac 1k)}}{\eta}\biggr)^4
$$
sub-intervals $I_j=[t_j,t_{j+1}]$ such that,
$$
\|\ulam\|_{L_{t}^{4}L_{x}^{8}(I_j\times \Bbb R^2)}\le \eta.
$$
Applying Proposition \ref{aclawfinal} on each of the sub-intervals $I_j$,
we get
\\
$$\sup_{t\in [0,T]} E(I_N\ulam(t))\le E(I_Nu_0^{\lambda})+ E(I_N u_0^{\lambda})LN^{-1+}.$$
To maintain small energy during the iteration, we need
\begin{align*}
LN^{-1+}\sim\lambda^{3(1-\frac 1k)}N^{-1+}\ll 1,
\end{align*}
which combined with \eqref{lambdachoice} leads to
$$
\biggl(N^{\frac{1-s}{s-1+\frac{1}{k}}}\biggr)^{3(1-\frac{1}{k})}N^{-1+}\le c(\|u_0\|_{\hs})\ll 1.
$$
This may be ensured by taking $N$ large enough (depending only on $k$ and $\|u_0\|_{H^s(\R^)}$), provided that
$$
s>s(k):=1-\frac{1}{4k-3}.
$$
As can be easily seen, $s(k)\to 1$ as $k\to \infty$.

This completes the bootstrap argument and hence \eqref{rescaled ma}, and moreover \eqref{ma bound}, follows.  Therefore \eqref{bdd kinetic}
holds for all $T\in\R$ and the conservation of mass and Lemma~\ref{basic property} imply
\\
\begin{align*}
\|u(T)\|_{\hs}&\lesssim \|u_0\|_{\lxt}+\|u(T)\|_{\dhs}\\
&\lesssim\|u_0\|_{\lxt}+\lambda^{s-(1-\frac{1}{k})}\|\ulam(\lambda^2T)\|_{\dhs}\\
&\lesssim \|u_0\|_{\lxt}+\lambda^{s-(1-\frac{1}{k})}\|I\ulam(\lambda^2T)\|_{H_x^1}\\
&\lesssim\|u_0\|_{\lxt}+\lambda^{s-(1-\frac{1}{k})}(\|\ulam(\lambda^2T)\|_{\lxt}+\|\nabla I\ulam(\lambda^2T)\|_{\lxt})\\
&\lesssim\|u_0\|_{\lxt}+\lambda^{s-(1-\frac{1}{k})}(\lambda^{1-\frac{1}{k}}\|u_0\|_{\lxt}+1)\\
&\lesssim  C(\|u_0\|_{\hs})
\end{align*}
for all $T\in \R$.  Hence,
\begin{align}\label{hsbdd}
\|u\|_{L_t^{\infty}\hs}\le C(\|u_0\|_{\hs}).
\end{align}
\\
Finally, we prove that scattering holds in $\hs$ for $s>s_k$.  The construction of the wave operators is standard and follows
 by a fixed point argument (see \cite{tc}). Here we show only asymptotic completeness.

The first step is to upgrade the global Morawetz estimate to global Strichartz control.
Let $u$ be a global $\hs$ solution to \eqref{nls}.  Then $u$ satisfies \eqref{ma bound}.
Let $\delta>0$ be a small constant to be chosen momentarily and split $\R$ into $L=L(\|u_0\|_{\hs})$ sub-intervals $I_j=[t_j, t_{j+1}]$ such that
$$
\|u\|_{L_{t}^{4}L_{x}^{8}(I_j\times \Bbb R^2)}\le \delta.
$$
By Lemma \ref{linstr}, \eqref{hsbdd}, and the fractional chain rule, \cite{cw}, we estimate
\begin{align*}
\|\langle\nabla\rangle^s u\|_{S^0(I_j)}
&\lesssim \|u(t_j)\|_{\hs}+\|\langle\nabla\rangle^s\bigl(|u|^{2k}u\bigr)\|_{L_{t,x}^{4/3}(I_j\times\Bbb R^2)}\\
&\lesssim C(\|u_0\|_{\hs})+\|u\|_{L_{t,x}^{4k}}^{2k}\|\langle \nabla\rangle^s u\|_{L_{t,x}^4(I_j\times\Bbb R^2)},
\end{align*}
while by H\"older and Sobolev embedding,
\begin{align*}
\|u\|_{L_{t,x}^{4k}(I_j\times\Bbb R^2)}^{2k}
&\lesssim \|u\|_{L_{t}^{4}L_{x}^{8}(I_j\times \Bbb R^2)}^{\frac {2k}{2k-1}}\|u\|_{L_{t}^{8k}L_{x}^{\frac{16k(k-1)}{3k-2}}(I_j\times\Bbb R^2)}^{\frac{4k(k-1)}{2k-1}}\\
&\lesssim \delta^{\frac {2k}{2k-1}}\||\nabla|^{\frac{8k^2-13k+4}{8k^2-8k}}u\|_{L_t^{8k}L_x^{\frac {8k}{4k-1}}(I_j\times\Bbb R^2)}^{\frac{4k(k-1)}{2k-1}}\\
&\lesssim \delta^{\frac {2k}{2k-1}}\|\langle\nabla\rangle^s u\|_{S^0(I_j)}^{\frac{4k(k-1)}{2k-1}}.
\end{align*}
The last inequality follows form the fact that for any $k \geq 2$ we have that $$s_{k}=1-\frac{1}{4k-3}>\frac{8k^2-13k+4}{8k^2-8k}.$$ Therefore,
$$
\|\langle\nabla\rangle^s u\|_{S^0(I_j)}
\lesssim C(\|u_0\|_{\hs})+\delta^{\frac {2k}{2k-1}}\|\langle\nabla\rangle^s u\|_{S^0(I_j)}^{1+\frac{4k(k-1)}{2k-1}}.
$$
A standard continuity argument yields
$$
\|\langle\nabla\rangle^su\|_{S^0(I_j)}\le C(\|u_0\|_{\hs}),
$$
provided we choose $\delta$ sufficiently small depending on $k$ and $\|u_0\|_{\hs}$.  Summing over all sub-intervals $I_j$, we obtain
\begin{equation}\label{s bound}
\|\langle\nabla\rangle^s u\|_{S^0(\R)}\le C(\|u_0\|_{\hs}).
\end{equation}

We now use \eqref{s bound} to prove asymptotic completeness, that is, there exist unique $u_{\pm}$ such that
\begin{equation}\label{limit}
\lim_{t\to \pm\infty}\|u(t)-e^{it\Delta}u_{\pm}\|_{\hs}=0.
\end{equation}
\\
Arguing as in Section 4, it suffices to see that
\begin{align}\label{lg}
\Bigl\|\int_{t}^\infty e^{-is\Delta}\bigl(|u|^{2k}u\bigr)(s)\,ds\Bigr\|_{\hs}\to 0 \quad \text{as } t\to \infty.
\end{align}
\\
The estimates above yield
\begin{align*}
\Bigl\|\int_{t}^\infty e^{-is\Delta}\bigl(|u|^{2k}u\bigr)(s)\,ds\Bigr\|_{\hs}
\lesssim \|u\|_{L_{t}^{4}L_{x}^{8}(I_j\times \Bbb R^2)}^{\frac {2k}{2k-1}}\|\langle \nabla \rangle^s u\|_{S^0([t,\infty]\times\R)}^{1+\frac{4k(k-1)}{2k-1}}.
\end{align*}
\\
Using \eqref{ma bound} and \eqref{s bound} we derive \eqref{lg}. This concludes the proof of Theorem 5.
\end{proof}


\begin{thebibliography}{100}

\bibitem{bl}
J. Bergh, J. L\"ofstr\"om,
{\it Interpolation Spaces,} Springer Verlag 1976.
\bibitem{jb1}
J. Bourgain, {\it Refinements of Strichartz' inequality and applications to
2D-NLS with critical nonlinearity,}
International Mathematical Research Notices, 5 (1998), 253--283.

\bibitem{jb2}
J. Bourgain.
{\it Global solutions of nonlinear {S}chr\"odinger equations,} American Mathematical Society, Providence, RI, 1999.

\bibitem{tc} T. Cazenave,
{\it Semilinear Schr\"oodinger equations,} CLN 10, eds: AMS, 2003.

\bibitem{cw} M. Christ, M. Weinstein, {\it Dispersion of small amplitude solutions of the
generalized Korteweg-de Vries equation}, J. Funct. Anal. \textbf{100} (1991), 87-109.

\bibitem{cmfourier}
R. Coifman, Y.  Meyer, {\it Commutateurs d'int\'egrales singuli\'eres et op\'erateurs multilin\'eaires},
Ann. Inst. Fourier (Grenoble) \textbf{28} (1978), 177--202.

\bibitem{cmaster}
R. Coifman, Y. Meyer,  {\it Au del\'a des op\'erateurs pseudo-diff\'erentiels},
Ast\'erisque \textbf{57}, Soci\'et\'e Math\'ematique de France, Paris, 1978.

\bibitem{col} J. Colliander (joint with M. Grillakis and N. Tzirakis), {\it The interaction Morawetz inequality on $\Bbb R^2$}, 
Workshop "Nonlinear Waves and Dispersive Equations", Oberwolfach Report No 44/2007.

\bibitem{cgt} J. Colliander, M. Grillakis, and N. Tzirakis,
{\it Improved interaction Morawetz inequalities for the cubic nonlinear Schr\"odinger equation in 2d,} 
to appear in International Mathematics Research Notices.

\bibitem{chvz} J. Colliander, J. Holmer, M. Visan and X. Zhang,
{\it Global existence and scattering for rough solutions to generalized
nonlinear Schr\"odinger equations on $\mathbb R$,}
Preprint (2006). ({\tt{math.AP/0612452}})



\bibitem{ckstt4} J. Colliander, M. Keel, G. Staffilani, H. Takaoka and
T. Tao,
 {\it Global existence and scattering for rough solutions to a nonlinear Schr\"odinger
 equations on $\R^{3}$ }, C.P.A.M. \textbf{57} (2004), no. 8, 987--1014.


\bibitem{ckstt5} J. Colliander, M. Keel, G. Staffilani, H. Takaoka and T. Tao,
{\it Global well-posedness and scattering in the energy space for the critical nonlinear Schr\"odinger equation in $\mathbb R^{3}$,}
Annals Math. \textbf{167} (2008), 767--865. ({\tt{math.AP/0402129}})


\bibitem{dps} D. De Silva, N. Pavlovic, G. Staffilani, and N. Tzirakis,
{\it Global well-posedness and polynomial bounds for the defocusing nonlinear Schr\"odinger equation in 1d,} 
to appear in Commun. Partial Differential Equations.

\bibitem{fg}
Y. Fang and M. Grillakis, {\it On the global existence of rough
solutions of the cubic defocusing Schr\"odinger equation in
$\R^{2+1}$,} to appear in JHDE.

\bibitem{gv} J. Ginibre and G. Velo, 
{\it The global Cauchy problem for the nonlinear Schr\"oodinger equation,} H. Poincar\'e
Analyse Non Lin\'eaire, 2 (1985), 309-327.

\bibitem{gv1} J. Ginibre and G. Velo, 
{\it Scattering theory in the energy space for a class of nonlinear Schr\"odinger equations,} J. Math. Pures Appl. 64 (1985), 363-401.

\bibitem{kt} M. Keel and T. Tao,
{\it Endpoint Strichartz estimates,} H. Poincar\'e Analyse non
Lin\'eaire, 120 (1998), 955-980.


\bibitem{ls} J. E. Lin and W. A. Strauss, {\it Decay and
    scattering of solutions of a nonlinear Schr\"odinger equation},
  J. Funct. Anal., \textbf{30}:2, (1978), 245--263.

\bibitem{cm} C. Morawetz,
{\it Time decay for the nonlinear Klein-Gordon equation,} Proc. Roy. Soc. A, 306 (1968), 291-296.

\bibitem{kn} K. Nakanishi, {\it Energy scattering for nonlinear Klein-Gordon and Schr\"odinger equations in spatial dimensions 1 and 2,} 
J. Funct. Anal. 169 (1999), 201-225.

\bibitem{pv1} F. Planchon (joint with L. Vega) \emph{Bilinear Virial Identities and Applications}, Workshop "Nonlinear Waves and Dispersive Equations", Oberwolfach Report No 44/2007.

\bibitem{pv} F. Planchon and L. Vega, \emph{Bilinear Virial
    Identities and Applications}, to appear,  Annales
Scientifiques de l'\'Ecole Normale Sup\'erieure. ({\tt{math.AP/0712.4076}})

\bibitem{es} E. M. Stein,
{\it Harmonic Analysis: Real variable Methods, Orthogonality and Oscillatory integrals,} Princeton Univ. Press, Princeton (1993).

\bibitem{tt} T. Tao,
{\it Nonlinear dispersive equations. Local and global analysis} CBMS 106, eds: AMS, 2006.


\bibitem{vis1}  M. Visan,
{\it The defocusing energy-critical nonlinear Schr\"odinger equation in higher dimensions,} to appear in Duke Math. J.

\bibitem{mv}  M. Visan and  X. Zhang,
{\it Global well-posedness and scattering for a class of nonlinear
Schr\"odinger equations below the energy space,} preprint.


\end{thebibliography}
\end{document}